\documentclass[a4paper,11pt]{article}

\usepackage[a4paper]{geometry}
%%\geometry{vmargin=3.8cm,hmargin=3.3cm} % marges verticales et horizontales
%%\geometry{left=2.5cm,right=2.5cm,top=1.5cm,bottom=2cm} % marges verticales et
%% horizontales minist�rielles (minimales)
\geometry{left=3cm,right=3cm,top=2.5cm} % proposition

\usepackage{amsmath,amsthm}
\usepackage{amssymb,amsfonts}
\usepackage{hyperref}
\usepackage{tikz}
\usepackage{enumerate}
\usepackage{graphicx}
\usepackage{calligra}
\usepackage{stmaryrd}
\DeclareSymbolFont{bbold}{U}{bbold}{m}{n}
\DeclareSymbolFontAlphabet{\mathbbm}{bbold}

\title{Generic absoluteness and boolean names for elements of a Polish space}

% !TEX root = article.tex
\theoremstyle{plain}
	\newtheorem{theorem*}{Theorem}
	\newtheorem{theorem}{Theorem}[section]
	\newtheorem{proposition}[theorem]{Proposition}
	\newtheorem{lemma}[theorem]{Lemma}
	
	\newtheorem{corollary}[theorem]{Corollary}
	\newtheorem{claim}{Claim}[theorem]

	\newtheorem{fact}[theorem]{Fact}
\theoremstyle{definition}
	\newtheorem{definition}[theorem]{Definition}
	
	\newtheorem{example}[theorem]{Example}

\theoremstyle{remark}
	\newtheorem{remark}[theorem]{Remark}

\newcommand{\ZFC}{\ensuremath{\textsf{ZFC}}}

\newcommand{\B}{\mathsf{B}}

\newcommand{\C}{\mathbb{C}}

\newcommand{\RO}{{\sf RO}}
\newcommand{\CL}{{\sf CL}}

\newcommand{\cp}[1]{\left( #1 \right)}				% I don't know what they mean. -F.

\newcommand{\Qp}[1]{\left\llbracket #1 \right\rrbracket}

				% the empty set
\newcommand{\set}[1]{\{#1\}}					% set
			% set with a middle
				% pair
			% power set operator
				% cardinality
			% analytic functions over a domain
\newcommand{\intclosure}[1]{\mathsf{Reg}(#1)}		% iterior of the closur
			% boolean algebra of regular open sets
			% boolean valued
			% borel subsets

\newcommand{\Int}[1]{\mathsf{Int}\left(#1\right)}
\newcommand{\cl}[1]{\mathsf{Cl}\left(#1\right)}

\renewcommand{\phi}{\varphi} %%%%% I bother to write \varphi

%%%%%%%%%%%
%%%%   Package for notes

\usepackage[mode=multiuser,status=draft,lang=english]{fixme}
\fxsetup{theme=colorsig}
\hypersetup{colorlinks=false, pdfborder={0 0 0}}

\usepackage{titlesec}
\titlelabel{\thetitle. \;}
\titleformat*{\section}{\centering\large\scshape}

\FXRegisterAuthor{M}{aM}{M.Viale}

%%%%%%%%%%%%%%%%%%

\author{Andrea Vaccaro, Matteo Viale}
\date{}

\begin{document}
	\maketitle
	% !TEX root = Article.tex
\begin{abstract}
It is common knowledge in the set theory community that  there
exists a duality relating the commutative $C^*$-algebras
with the family of 
$\B$-names for complex numbers in a boolean valued model for set theory $V^{\B}$.
Several aspects of this correlation have 
been considered in works of the late $1970$'s and early $1980$'s, 
for example 
by Takeuti~\cite{MR555538,MR0505474}, and by
Jech~\cite{MR779056}.
Generalizing Jech's results, we extend this duality so as to be able
to describe the family of boolean names for elements of any given Polish space $Y$ 
(such as the complex numbers) in a boolean valued model for set theory $V^\B$
as a space $C^+(X,Y)$ consisting of functions $f$ whose domain $X$ is the Stone space of $\B$,
and whose range is contained in $Y$ modulo a meager set.
We also outline how this duality can be combined with generic absoluteness results in order
to analyze, by means of forcing arguments, the theory 
of $C^+(X,Y)$.
\textbf{MSC: 03E57}
\end{abstract}

\section{Introduction}
There has been in the early eighties and in the late seventies a wave of attention to the possible 
applications of the forcing machinery in the study of certain type of operator algebras,
key references are Jech's~\cite{MR779056}, and Takeuti's~\cite{MR555538,MR0505474}.
This paper aims to revive this line of research, which in our eyes deserves more attention, at least  from
the set theory community. 
Takeuti and Jech's works outline a correspondence existing between
the theory of commutative unital $C^*\!$-algebras, a specific domain of functional analysis,
and the theory of Boolean valued models, which pertains to logic and set theory.
Takeuti's works~\cite{MR555538,MR0505474} show that one can employ the general machinery of 
forcing to derive certain properties for spaces of operators: this is done first by interpreting these operators as 
suitable $\B$-names for complex numbers in a $\B$-valued model for set theory $V^\B$, then showing
that certain properties can be proved for these $\B$-names using the boolean semantics for $V^\B$, 
finally pulling 
back these properties from the $\B$-names to the corresponding operators.
Jech's~\cite{MR779056} 
develops an algebraic theory of commutative spaces of normal (possibly unbounded) 
operators, in his terminology the
stonean algebras, and proves that
the $\B$-names
for complex numbers in the boolean valued model for set theory $V^{\B}$ can be used to classify up to isomorphism
all possible \emph{complete} stonean algebras. He further develops a functional calculus for stonean algebras and
shows that all the familiar tools given by Gelfand transform for commutative $C^*\!$-algebras naturally 
extend to the framework of stonean algebras. 
In particular~\cite{MR779056} brings to an explicit mathematical form the duality existing
between the theory of commutative $C^*\!$-algebras and the $\B$-names for complex numbers 
in the boolean valued models for set theory of the form $V^\B$. 

This paper generalizes this duality to arbitrary Polish spaces.
% showing that this correspondence
%(at least potentially) can be employed to apply deep results in the theory of forcing
%in useful tools to analyze certain spaces of functions which can be of interest in various domains
%of mathematics. 
We will expand on Jech's and Takeuti's works and devise
a natural translation process to recast the arguments which are used to analyze the 
properties of real numbers in a forcing extension, into arguments that can be expressed 
in the language of functional analysis
enriched with a tiny bit of first order logic.
For example our 
results show how to transform generic absoluteness results, such as Shoenfield's absoluteness and Woodin's
proof of the invariance of the theory of $L(\mathbb{R})$ under set forcing in the presence of class 
many Woodin cardinals~\cite{LARSON,MR959110,MR1713438}, in 
tools to describe the degree of elementarity between the field of complex numbers 
(enriched eventually with Borel predicates) and the ring of germs at points 
of the spectrum
of a commutative $C^* \!$-algebra with extremally disconnected spectrum. 
In particular our results can be seen as a further enhancement of the program launched by 
Takeuti in~\cite{MR555538,MR0505474} aiming to employ forcing methods in the study of operator algebras.
%since they allow to recast the generic absoluteness results for the theory
%of $L(\mathbb{R})$ obtained by Woodin in the eighties~\cite{MR959110,MR1713438} in a terminology 
%which can be grasped by any scholar with a fair knowledge of the basics of first order logic.
The major outcome of the present paper can be summarized in the following definitions and result:

\begin{quote}
Let $X$ be an extremally disconnected compact Hausdorff space and $Y$ be any Polish space\footnote{
$X$ is extremally disconnected if the closure of an open set is open, or equivalently if its regular open sets are closed.
$Y$ is Polish if it is a separable topological space whose topology can be induced by a complete metric on $Y$.}
with a Polish compactification $\bar{Y}$: for example if $Y=\C$, 
$\bar{Y}=\C\cup\set{\infty}=\mathbb{S}^2$ can be
the one point compactification of $\C$.

Let $C^+(X,Y)$ be the space 
of continuous functions $f:X\to \bar{Y}$ 
such that the preimage of $\bar{Y}\setminus Y$ is 
meager in $X$:
for example if $Y=\C$, $\bar{Y}=\C\cup\set{\infty}=\mathbb{S}^2$, 
we require that $f^{-1}[\set{\infty}]$ is closed nowhere dense in $X$.

For any $p\in X$, let 
$C^+(X,Y)/p$ be the ring of germs in $p$  of functions in $C^+(X,Y)$: i.e. 
$f,g\in C^+(X,Y)$ define the same equivalence class or germ
in $C^+(X,Y)/p$ if $f\restriction U=g\restriction U$ for some open neighborhood $U$ of $p$.

Given $R\subseteq Y^n$ \emph{any} Borel predicate and $p\in X$,
define $R^X/p([f_1],\dots[f_n])$ to hold if there is a neighborhood $U$ of $p$ 
such that
$R(f_1(x),\dots,f_n(x))$ holds on a comeager subset of\footnote{
Recall that $A\subseteq X$ is meager if it is the union of countably many nowhere dense sets, and
$B$ is comeager in $U$ if $U\setminus B$ is meager.
It requires an argument based on the fact that $R$ is Borel
to show that
$R^X/p$ is well defined.} $U$. 
Equivalently $R^X/p$ is the quotient in $p$
of the boolean predicate $R^X:C^+(X,Y)^n \to\mathsf{CL}(X)$ mapping
\[
(f_1,\dots, f_n) \mapsto \intclosure{\set{x\in X: R(f_1(x),\dots,f_n(x))}}
\]
where $\mathsf{CL}(X)$ is the boolean algebra given by 
clopen subsets of $X$, and $\intclosure{A}$ is the interior of the closure of $A$ for any $A\subseteq X$.
\end{quote}
In essence we have defined a sheaf structure on the space of functions $C^+(X,Y)$, which is
making sense not only of the ring of germs at any point of $X$, but also of the interpretation of the Borel predicate
$R$ in such rings of germs. For example, $R$ can be the equality relation on $Y$ or the graph of multiplication
on $\C$, or 
any finite (or countable) combination of such kind of Borel relations on $Y$.

We have the following theorem relating the first order theory of 
the rings of germs $C^+(X,Y)/p$ so defined, to the space $Y$:

\begin{theorem*} \label{thm:names-funct}
Let $X$ be an extremally disconnected compact Hausdorff space and $Y$ be any Polish space.
%with a Polish compactification $\bar{Y}$.
Fix Borel relations $R_1\subseteq Y^{n_1}$,\dots, $R_k\subseteq Y^{n_k}$.

Then
for all $p\in X$,
the first order structure 
$\langle Y,R_1,\dots,R_n\rangle$ is $\Sigma_2$-elementary in the first order structure 
$\langle C^+(X,Y)/p,R_1^X/p,\dots, R_n^X/p\rangle$.
%, i.e.:
%\[
%\langle Y,R_1,\dots,R_n\rangle\prec_{\Sigma_2} \langle C^+(X,Y)/p,R_1^X/p,\dots, R_n^X/p\rangle.
%\]
%where $C^+(X,Y)/p$ is the ring of germs\footnote{I.e. $f,g\in C^+(X,Y)$ define the same equivalence class or germ
%in $C^+(X,Y)/p$ if $f\restriction U=g\restriction U$ for some open neighborhood $U$ of $p$.} 
%
%and 
%$R^X/p([f_1],\dots[f_n])$ holds if there is a neighborhood $U$ of $p$ 
%such that
%$R(f_1(x),\dots,f_n(x))$ holds on a dense set of $x\in U$.

Moreover, if we assume the existence of class many Woodin cardinals, we get that
\[
\langle Y,R_1,\dots,R_n\rangle\prec \langle C^+(X,Y)/p,R_1^X/p,\dots, R_n^X/p\rangle.
\]
\end{theorem*}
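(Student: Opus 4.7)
The strategy is to leverage the duality established earlier in the paper: elements $f\in C^+(X,Y)$ correspond to $\B$-names $\dot f$ for elements of $Y$ in the boolean-valued model $V^\B$, where $\B=\CL(X)=\RO(X)$ is the complete boolean algebra of clopen subsets of the extremally disconnected space $X$. Under this correspondence, for any Borel $R\subseteq Y^n$ one has
\[
\Qp{R(\dot f_1,\dots,\dot f_n)}^\B=\intclosure{\bp{x\in X:R(f_1(x),\dots,f_n(x))}},
\]
so that the germ predicate is characterized by $R^X/p([f_1],\dots,[f_n])\iff p\in\Qp{R(\dot f_1,\dots,\dot f_n)}^\B$. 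The first step is a translation lemma extending this correspondence by induction on formula complexity to arbitrary first-order $\phi(\vec y)$ in the signature $\{R_1,\dots,R_k\}$: if $\Qp{\phi(\vec{\dot f})}^\B=1_\B$ then the germ structure satisfies $\phi([\vec f])$ at \emph{every} $p\in X$, while if $\Qp{\phi(\vec{\dot f})}^\B=0_\B$ then $\phi([\vec f])$ fails in every germ. The essential input is the fullness principle of $V^\B$ (valid since $\B$ is complete): from $\Qp{\exists y\,\psi(y,\vec{\dot f})}^\B=1_\B$ one extracts a \emph{single} name $\dot g$ with $\Qp{\psi(\dot g,\vec{\dot f})}^\B=1_\B$, which corresponds via the duality to some $g\in C^+(X,Y)$ whose germ uniformly witnesses $\exists y\,\psi$ at every $p$.

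The $\Sigma_2$ elementarity then follows from Shoenfield's absoluteness. Parameters $\vec a\in Y$ are identified with the constant functions $x\mapsto a_i$ in $C^+(X,Y)$, hence with the check-names $\check{\vec a}$. Any $\Sigma_2$ statement $\phi(\vec a)\equiv\exists \vec x\,\forall \vec y\,\psi(\vec x,\vec y,\vec a)$ with $\psi$ a boolean combination of the $R_i$ translates, via a Borel coding of $Y$ as a subset of $2^\omega$, into a $\Sigma^1_2$ statement about reals with ground-model real parameters. Shoenfield's theorem thus yields
\[
V\models\phi(\vec a)\iff 1_\B\Vdash\phi(\check{\vec a})\iff\Qp{\phi(\check{\vec a})}^\B=1_\B,
\]
and dually $V\models\neg\phi(\vec a)\iff\Qp{\phi(\check{\vec a})}^\B=0_\B$. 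Combined with the translation lemma, this gives $\langle Y,R_i\rangle\models\phi(\vec a)\iff\langle C^+(X,Y)/p,R_i^X/p\rangle\models\phi(\vec a)$ for every $p\in X$.

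The full elementarity under class many Woodin cardinals is obtained by exactly the same argument, with Woodin's generic absoluteness replacing Shoenfield: the first-order theory of $L(\R)$ with real parameters is invariant under set forcing, and every first-order formula $\phi(\vec a)$ in our signature with $Y$-parameters $\vec a$ is an $L(\R)$-statement via the Borel coding. Hence $\Qp{\phi(\check{\vec a})}^\B\in\{0_\B,1_\B\}$ matches truth in $\langle Y,R_i\rangle$, and the translation lemma transports this to elementarity at every $p\in X$.

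The principal obstacle is the translation lemma itself, and in particular the requirement that elementarity hold at \emph{every} $p\in X$ rather than merely at a comeager set of ``generic'' points. Here fullness of $V^\B$ does the crucial work: for a forced existential it produces a \emph{single} canonical $\B$-name whose associated $f\in C^+(X,Y)$ acts as a uniform witness in every germ, rather than a $p$-dependent choice of witness; this relies essentially on the extremal disconnectedness of $X$, which makes $\CL(X)$ a complete boolean algebra. Once the translation lemma is in place, the $\Sigma_2$ and full elementarity assertions reduce to formal applications of Shoenfield's and Woodin's absoluteness theorems respectively.
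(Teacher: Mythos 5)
Your proposal is correct and follows essentially the same route as the paper: your ``translation lemma'' is precisely \L o\'{s}'s Theorem~\ref{los} for full $\B$-valued models, with fullness supplied by the completeness of $\CL(X)$ (Mixing/Fullness Lemmas~\ref{mixing:lemma} and~\ref{fullness:lemma}), the atomic-level duality is Theorem~\ref{thm:iso2} together with Lemma~\ref{lem:embed}, and the reduction of $\Sigma_2$ statements with Borel matrix to $\Sigma^1_2$ statements decided by Shoenfield (resp.\ to $L(\mathbb{R})$-statements decided by Woodin's generic absoluteness) is exactly the paper's argument. The only cosmetic difference is that you obtain uniform existential witnesses from fullness of $V^\B$ and transport them through the isomorphism $f\mapsto\tau_f$, whereas the paper proves fullness of $C^+(St(\B),Y)$ directly on the function-space side.
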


Contrary to the case of Jech's and Takeuti's works 
(which require also a certain degree of familiarity with
the basic theory of operator algebras), our results
can be understood by any reader which has  
familiarity with the forcing method and with the basic topological properties of
Polish spaces.
In the case $Y=\C$, the space of functions $C^+(X,Y)$ 
we consider is the unique complete
stonean algebra (according to Jech's terminology of~\cite{MR779056}) whose algebra of projections is given by 
the characteristic functions of clopen sets on $X$. Using Jech's methods, $C^+(X,\C)$ can be 
described as the result of a natural
limit process over the commutative and unital 
$C^*\!$-algebra $C(X)$. It can be seen that one can obtain a different proof of
Theorem~\ref{thm:names-funct} for the case $Y=\C$
using the results of~\cite{MR779056}. We remark nonetheless that the methods
in Jech's paper do not seem to be of use if one aims to give a proof of Theorem~\ref{thm:names-funct} for a
Polish space $Y$ other than $\C$ or $\mathbb{R}$, since his arguments exploit also algebraic features peculiar to
the field structure of $\C$ and to the ordered field $\mathbb{R}$, while our arguments are purely rooted in
the topological properties common to all Polish spaces.

We organize the paper as follows: in section \ref{sec1} we introduce the space of functions $C^+(X,Y)$ with 
$X$ compact, Hausdorff and extremally disconnected, $Y$ Polish,
and we outline its
simplest properties.
% we will let $C^+(X)$ denote the space $C^+(X,\C)$
In section \ref{section:BVM} we introduce the notion of $\B$-valued model for a first order 
signature, and we show how
to endow $C^+(X,Y)$ %(for $X$ compact, Hausdorff and extremally disconnected and $Y$ Polish)
of the structure of a $\B$-valued model for $\B$ the boolean algebra given 
by regular open (or clopen) sets 
of $X$. In section \ref{section:names} we exhibit a natural isomorphism existing between the $\B$-valued 
models $C^+(X,Y)$ (for $\B$ the clopen sets of $X$) and the family of boolean names for elements of the Polish space 
$Y$ as computed in the boolean valued model for 
set theory $V^\B$.
In section \ref{sec5} we show how to translate generic absoluteness results in 
a proof of Theorem~\ref{thm:names-funct}.
This paper outlines the original parts of the master thesis of the first author \cite{tesi}. A thorough presentation of 
all the results (and the missing details) presented here can be found there.
We try to make the statements of the
theorems comprehensible to most readers with a fair acquaintance with first order logic. 
On the other hand the proofs of our main results will
require a great familiarity with the forcing method.
We encounter a problem in the exposition: those familiar with forcing arguments will find most 
of the proofs redundant or trivial, those unfamiliar with forcing will find the paper far too sketchy. We aim to 
address readers of both kinds, so the current presentation tries to cope with this tension at the best of our possibilities.

\section{The space of functions $C^+(St(\B))$}\label{sec1}
We refer the reader to \cite[Chapter 2]{tesi} or to \cite[Chapter $10$]{halmos}
for a detailed account on the material  
presented in this section.
Let $(X,\tau)$ be a topological space.
For $A\subseteq X$, the interior of $A$ $\textsf{Int}(A)$ is the union of all open sets contained in $A$ and
the closure of $A$ $\textsf{Cl}(A)$ is the intersection of all closed sets containing $A$.
$A$ is regular open if it coincides with the interior of its closure.
$\intclosure{A}=\Int{\cl{(A)}}$ is the regular open set we attach to any $A\subseteq X$.

\begin{itemize}
\item
A topological space $(X,\tau)$ is $0$-dimensional if its clopen sets form a basis for $\tau$.
\item
A compact topological space $(X,\tau)$ is extremally (extremely) disconnected 
if its algebra of clopen sets $\mathsf{CL}(X)$
coincides with its algebra of regular open sets $\mathsf{RO}(X)$.
\end{itemize}

For a boolean algebra $\B$, we let $St(\B)$ be the Stone space of its ultrafilters
with topology generated by the clopen sets
\[
\mathcal{O}_b=\{G\in St(\B):b\in G\}.
\]

The following holds:
\begin{itemize}
\item
$St(\B)$ is a compact $0$-dimensional Hausdorff space, and any
$0$-dimensional compact Hausdorff space $(X,\tau)$ is isomorphic to 
$St(\mathsf{CL}(X))$,
%\item
%$\B$ is isomorphic to the family of clopen subsets of $St(\B)$ in the above topology.
%\item
%$St(\B)$
%is extremally (or extremally) disconnected (i.e. its regular open sets are closed --- 
%equivalently 
%$\mathring{\overline{A}}=\overline{A}$ for any open set $A$) 
%if and only if its algebra of clopen sets
%overlaps with its algebra of regular open sets and is a complete boolean algebra.
\item
A compact Hausdorff space $(X,\tau)$ is extremally disconnected if and only if
its algebra of clopen sets 
is a complete boolean
algebra.
In particular $St(\B)$ is extremally disconnected if and only if 
$\B=\mathsf{CL}(St(\B))$ is complete.
\end{itemize}
Recall also that the algebra of regular open sets of a topological space $(X,\tau)$ is 
always a complete boolean algebra
with operations
\begin{itemize}
\item
$\bigvee \{A_i:i\in I\}=\intclosure{\bigcup\{A_i:i\in I\}}$,
\item
$\neg A=\Int{X\setminus A}$,
\item
$A\wedge B=A\cap B$.
\end{itemize}

An antichain on a boolean algebra $\B$ is a subset $A$ such that
$a\wedge b=0_\B$ for all $a,b\in A$, $\B^+=\B\setminus\{0_\B\}$ is the family of positive elements of 
$\B$,
and a dense subset of $\B^+$ is a subset $D$ such that for all $b\in \B^+$ there is 
$a\in D$ such that $a\leq_\B b$. In a complete boolean algebra $\B$
any dense subset $D$ of $\B^+$ contains an antichain $A$ such that 
$\bigvee A=\bigvee D=1_\B$, recall also that a predense subset $X$ of $\B$ is a subset such that
$\bigvee X=1_\B$ or equivalently such that its downward closure is dense in $\B^+$.

Another key observation on Stone spaces of complete boolean algebras 
we will often need is the following:
\begin{fact}
Assume $\B$ is a complete atomless boolean algebra, then on its Stone space $St(\B)$:
\begin{itemize}
\item
$\mathcal{O}_{\bigvee_\B A}=\intclosure{\bigcup_{a\in A}\mathcal{O}_a}$ for all $A\subseteq \B$.
\item
$\mathcal{O}_{\bigvee_\B A}=\bigcup_{a\in A}\mathcal{O}_a$ for all finite sets $A\subseteq\B$.
\item
For any infinite antichain $A\subseteq \B^+$, $\bigcup_{a\in A}\mathcal{O}_a$ is properly contained
in $\mathcal{O}_{\bigvee_\B A}$ as a dense open subset 
($\set{\lnot a:a\in A}$ has the finite intersection property and can be extended to an ultrafilter disjoint from $A$).
\end{itemize}
\end{fact}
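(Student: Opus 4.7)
The argument rests on two Stone-duality facts that I will use throughout: $\mathcal{O}_b\subseteq\mathcal{O}_c$ iff $b\leq_\B c$ in $\B$, and each $\mathcal{O}_b$ is clopen in $St(\B)$ hence regular open. I will also use that finite unions of clopens are clopen.

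For the first bullet, one inclusion is immediate: $a\leq_\B\bigvee_\B A$ gives $\mathcal{O}_a\subseteq\mathcal{O}_{\bigvee_\B A}$ for every $a\in A$, and since $\mathcal{O}_{\bigvee_\B A}$ is already regular open we obtain $\intclosure{\bigcup_{a\in A}\mathcal{O}_a}\subseteq\mathcal{O}_{\bigvee_\B A}$. For the reverse inclusion I would show that $\bigcup_{a\in A}\mathcal{O}_a$ is dense in $\mathcal{O}_{\bigvee_\B A}$: if the open set $\mathcal{O}_{\bigvee_\B A}\setminus\cl{\bigcup_{a\in A}\mathcal{O}_a}$ were nonempty it would contain some basic clopen $\mathcal{O}_b$ with $b>_\B 0_\B$, and then $b\leq_\B\bigvee_\B A$ together with $b\wedge a=0_\B$ for every $a\in A$ would force $b\leq_\B\neg\bigvee_\B A$, hence $b=0_\B$, a contradiction. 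The second bullet is then immediate: a finite union of clopens is clopen and already regular open, so $\intclosure{\bigcup_{a\in A}\mathcal{O}_a}=\bigcup_{a\in A}\mathcal{O}_a$ and the first bullet applies.

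The third bullet is where the only real content lies. Density is automatic from the first bullet. For proper containment I follow the parenthetical hint and verify the finite intersection property of $\{\bigvee_\B A\}\cup\{\neg a:a\in A\}$: given $a_1,\dots,a_n\in A$, if $\bigvee_\B A\wedge\neg a_1\wedge\cdots\wedge\neg a_n=0_\B$ then $\bigvee_\B A\leq_\B a_1\vee\cdots\vee a_n$, and picking $a_{n+1}\in A\setminus\{a_1,\dots,a_n\}$ (which exists by infinitude of $A$) would yield $a_{n+1}\leq_\B a_1\vee\cdots\vee a_n$, hence $a_{n+1}=a_{n+1}\wedge(a_1\vee\cdots\vee a_n)=0_\B$ by antichain-ness, contradicting $a_{n+1}\in\B^+$. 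Extending this FIP family to an ultrafilter $G$ produces a witness in $\mathcal{O}_{\bigvee_\B A}\setminus\bigcup_{a\in A}\mathcal{O}_a$. The only mildly subtle step across the whole Fact is the density argument in the first bullet; atomlessness does not seem to play any role in these three verifications and is presumably a convention of the surrounding development.
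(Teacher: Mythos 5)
Your proof is correct, and it follows the route the paper intends: the paper states this Fact without proof, offering only the parenthetical hint for the third bullet, and your verifications via Stone duality ($\mathcal{O}_b\subseteq\mathcal{O}_c$ iff $b\leq_\B c$, $\mathcal{O}_b=\empset$ iff $b=0_\B$) together with the density argument for the first bullet are the standard and expected ones. One point is worth flagging in your favor: your FIP family $\set{\bigvee_\B A}\cup\setm{\lnot a}{a\in A}$ is actually a necessary repair of the paper's hint as literally stated. An ultrafilter extending merely $\setm{\lnot a}{a\in A}$ is disjoint from $A$ but need not contain $\bigvee_\B A$ (it could instead contain $\lnot\bigvee_\B A$ whenever $\bigvee_\B A\neq 1_\B$, and the bullet does not assume the antichain is maximal), so the witness it produces might fail to lie in $\mathcal{O}_{\bigvee_\B A}$; adjoining $\bigvee_\B A$ to the family, with your finite-meet computation $a_{n+1}\wedge(a_1\vee\cdots\vee a_n)=0_\B$, is exactly what is needed. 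Your closing observation is also accurate: atomlessness is never used in any of the three bullets (completeness is what guarantees $\bigvee_\B A$ exists for arbitrary $A$); it is a standing hypothesis because atomless complete algebras are the nontrivial forcing case, and because only there can infinite antichains be guaranteed to exist at all.
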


Given a compact Hausdorff topological space $X$, we let $C^+(X)$ be the space of continuous
functions
\[
f:X\to \mathbb{S}^2=\mathbb{C}\cup\{\infty\}
\] 
(where $\mathbb{S}^2$ is seen as the one point
compactification of $\mathbb{C}$) with the property that $f^{-1}[\{\infty\}]$ is a closed nowhere dense
(i.e. with a dense open complement)
subset of $X$. 
In this manner we can endow $C^+(X)$ of the structure of a commutative
ring of functions with involution,
letting the operations be defined pointwise on all points whose image is 
in $\mathbb{C}$, and be undefined on the preimage of $\infty$.
More precisely $f+g$ is the unique continuous function 
\[
h:X\to \mathbb{S}^2
\] 
such that
$h(x)=f(x)+g(x)$ whenever this makes sense (it makes sense on an open dense subset of 
$X$, since the preimage of the point at infinity under $f,g$ is closed nowhere dense)
and is extended by continuity on the points on which $f(x)+g(x)$ is undefined. 
Thus $f+g\in C^+(X)$
if $f,g\in C^+(X)$. Similarly we define the other operations.
We take the convention that constant functions are always
denoted by their constant value, and that
$0=1/\infty$.
We leave to the reader as an instructive exercise the following:
\begin{lemma}
Let $X$ be compact Hausdorff extremally disconnected.
Then for any $p\in X$ the ring of germs $C^+(X)/p$ is
an algebraically closed field.
\end{lemma}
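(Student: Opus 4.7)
My plan is to address the field and algebraic closure assertions separately, using in both cases the projectivity features that extremal disconnectedness forces on $X$.

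For the field property, suppose $[f]\neq[0]$ in $C^+(X)/p$, which means that $f$ is not identically zero on any neighborhood of $p$. Then $V=\{x\in X: f(x)\neq 0\}$ is open and $p\in\cl{V}$; by extremal disconnectedness $W:=\cl{V}$ is clopen, hence an open neighborhood of $p$. Using that $z\mapsto 1/z$ is a self-homeomorphism of $\mathbb{S}^2$ swapping $0$ and $\infty$, define $g:X\to\mathbb{S}^2$ by $g=1/f$ on $W$ and $g=0$ on $X\setminus W$; continuity is automatic because $W$ is clopen. The set $g^{-1}\{\infty\}$ is contained in $W\setminus V$, which is nowhere dense since $V$ is dense in $W$, so $g\in C^+(X)$. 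On the open dense subset $V\cap f^{-1}(\mathbb{C})$ of $W$ we have $fg=1$, hence $fg=1$ on $W$ by continuity, so $[g]=[f]^{-1}$ in $C^+(X)/p$.

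For algebraic closure, take a monic polynomial $P(t)=t^n+[a_{n-1}]t^{n-1}+\cdots+[a_0]$ of positive degree over $C^+(X)/p$ and fix representatives $a_i\in C^+(X)$. Let $\Omega:=\bigcap_i a_i^{-1}(\mathbb{C})$, an open dense subset of $X$, and set
\[
Z := \cl{\bigl\{(x,z)\in \Omega\times\mathbb{C} \;:\; z^n + \textstyle\sum_{i<n} a_i(x)z^i = 0\bigr\}} \subseteq X\times\mathbb{S}^2.
\]
Then $Z$ is compact and the projection $\pi:Z\to X$ is a continuous surjection: surjectivity is immediate on $\Omega$ by the fundamental theorem of algebra, and off $\Omega$ one picks a limit point of roots along a net converging to the given point, using compactness of $\mathbb{S}^2$. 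By Gleason's theorem, which states that extremally disconnected compact Hausdorff spaces are projective in the category of compact Hausdorff spaces, there is a continuous section $s:X\to Z$ with $\pi\circ s=\mathrm{id}_X$; writing $s(x)=(x,r(x))$ yields a continuous $r:X\to \mathbb{S}^2$. Since $\infty$ is never a root of $P_x$ for $x\in\Omega$ (the leading coefficient being $1$), $r^{-1}\{\infty\}$ is closed and disjoint from the dense set $\Omega$, hence nowhere dense, so $r\in C^+(X)$; and $P(r)$ is a continuous $\mathbb{S}^2$-valued function that vanishes on $\Omega$, so $P(r)\equiv 0$ and $[r]$ is a root of $P$ in $C^+(X)/p$.

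The main obstacle is not algebraic but topological: although for each fixed $x\in\Omega$ a root of $P_x$ exists trivially, the multivalued map $x\mapsto\{z:P_x(z)=0\}$ need not admit any continuous single-valued selection over a general base, and it is exactly Gleason's projectivity theorem that converts the extremal disconnectedness of $X$ into the existence of such a global selection. An alternative route, foreshadowed by later sections of the paper, would interpret $P$ as a boolean name in $V^{\mathsf{CL}(X)}$ and apply the forcing maximum principle to the fact that $V^{\mathsf{CL}(X)}$ models that $\mathbb{C}$ is algebraically closed.
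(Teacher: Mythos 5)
Your proof is correct, but it takes a genuinely different route from the paper's. The paper does not argue directly at all: it derives the lemma as an immediate corollary of the main theorem, namely that $\langle \C, R_1,\dots,R_n\rangle \prec_{\Sigma_2} \langle C^+(X)/p, R_1^X/p,\dots,R_n^X/p\rangle$, together with the observation that the theory of algebraically closed fields is $\Pi_2$-axiomatizable via Borel predicates on $\C^n$ — which is exactly the alternative you mention in your closing sentence (interpret everything in $V^{\mathsf{CL}(X)}$, use fullness/\L o\'{s} and Shoenfield absoluteness); the only part the paper invites the reader to do by hand is the field property, which is your first paragraph. Your argument instead stays entirely inside topology: extremal disconnectedness enters in the field part through $\cl{V}$ being clopen, and in the algebraic-closure part through Gleason's projectivity theorem, which replaces the entire boolean-valued machinery by a continuous selection of roots over the compact space $Z$. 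What each buys: your route is self-contained, constructs the inverse and the root explicitly, and needs no forcing or absoluteness, at the price of invoking Gleason's theorem and of being tailored to this one statement; the paper's route constructs nothing but delivers uniformly \emph{every} $\Pi_2$ (indeed $\Sigma_2$) Borel-expressible property of $\C$, of which algebraic closure is a single instance. One step you should make explicit: to conclude that $r(x)$ is a finite root of $P_x$ for $x\in\Omega$ you need $Z\cap(\Omega\times\set{\infty})=\empset$, and the remark that ``$\infty$ is never a root'' is not by itself enough, since $Z$ is a closure and could a priori pick up limit points at infinity over $\Omega$; the fix is the standard local root bound coming from monicity — all roots of $P_y$ satisfy $|z|\le 1+\max_i |a_i(y)|$, and this is locally bounded near each $x\in\Omega$ because the $a_i$ are continuous and finite there — after which continuity of $(y,z)\mapsto P_y(z)$ on $\Omega\times\C$ gives $P_x(r(x))=0$, and the rest of your argument goes through as written.
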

Its proof will be an immediate corollary of the main theorem we stated in the introduction, since the
theory of algebraically closed fields is axiomatizable by means of $\Pi_2$-formulas using simple Borel predicates
on $\mathbb{C}^n$ for all $n$.
However, as a warm up for the sequel, the reader can try to prove that it is a field.

\begin{remark}
The reader is averted that the spaces of functions $C^+(X)$ we are considering 
may not be exotic: for example let $\nu$ denote the Lebesgue measure on $\mathbb{R}$, and
$\mathsf{MALG}$ denote the complete boolean algebra given by Lebesgue-measurable sets modulo
Lebesgue null sets. $C(St(\mathsf{MALG}))$ is isometric to 
$L^\infty(\mathbb{R})$ via the Gelfand-transform of
the $C^*$-algebra $L^\infty(\mathbb{R})$, and consequently $St(\mathsf{MALG})$ is homeomorphic
to the space of characters of $L^\infty(\mathbb{R})$ endowed with the \emph{weak-$*$ topology}
inherited from the dual of $L^\infty(\mathbb{R})$.
$C^+(St(\mathsf{MALG}))=L^{\infty+}(\mathbb{R})$ is obtained by adding to $L^{\infty}(\mathbb{R})$ the
measurable functions $f:\mathbb{R}\to\mathbb{S}^2$ 
such that $\nu(\bigcap_{n\to\infty}\set{x:|f(x)|>n})=0$.

Moreover by means of the Gelfand transform the spaces 
$C^+(X)$ we are considering are always obtained canonically from a commutative unital 
$C^*$-algebra with extremally disconnected spectrum by a completion procedure as the one described 
above for
$L^{\infty+}(\mathbb{R})$. Jech's~\cite{MR779056} is an useful source for 
those aiming to explore further this analogy.
\end{remark}

\section{Boolean Valued Models} \label{section:BVM}
%We refer the reader to \cite[Chapter 3]{tesi} for a detailed account on the material of this section.
In a first order model a formula can be interpreted as true or false. 
Given a complete boolean algebra $\B$,
$\B$-boolean valued models generalize Tarski semantics associating to each formula a value in $\B$, 
so that
there are no more only true and false propositions (those associated to 
$1_{\B}$ and $0_{\B}$ respectively),
but also other ``intermediate values'' of truth.
The classic definition of boolean valued models for set theory
and of their semantic
for the language $\mathcal{L}=\set{\in}$ may be found
in \cite[Chapter $14$]{Jech}. As mentioned earlier,
we need to generalize the definition to any first order language and to any theory of the language. 
A complete account of the theory of these boolean
valued models can be found in \cite{Rasiowa}. Since this book is a bit out of date, we recall below the basic facts we will need and we invite the reader to consult \cite[Chapter 3]{tesi} for a detailed account on the material of this section.

\begin{definition} \label{str2}
Given a complete boolean algebra $\B$ and a first order language 
\[ 
\mathcal{L}=\left\{R_i :i\in I \right\} \cup \left\{ f_j:j \in J \right\} 
\]
a \emph{$\B$-boolean valued model} (or $\B$-valued model) 
$\mathcal{M}$ in the language $\mathcal{L}$ is a tuple 
\[ 
\langle M, =^\mathcal{M}, R_i^\mathcal{M} : i \in I, f_j^\mathcal{M} : j \in J\rangle
 \] 
where:
\begin{enumerate}
\item \label{nn1} $M$ is a non-empty set, called \emph{domain} of the 
$\B$-boolean valued model, whose elements are called \emph{$\B$-names};
\item $=^\mathcal{M}$ is the \emph{boolean value} of the equality: 
\begin{align*}
=^\mathcal{M}:M^2 &\to \B \\
(\tau,\sigma) &\mapsto \Qp{\tau=\sigma}^{\mathcal{M}}_{\B}
\end{align*}
\item The forcing relation $R_i^{\mathcal{M}}$ is the \emph{boolean interpretation}
of the $n$-ary relation symbol $R_i$:
\begin{align*}
R_i^{\mathcal{M}}:M^n &\to \B \\
(\tau_1,\dots,\tau_n) &\mapsto \Qp{R_i(\tau_1,\dots, \tau_n)}^{\mathcal{M}}_{\B}
\end{align*}
\item $f_j^{\mathcal{M}}$ is the \emph{boolean interpretation}
of the $n$-ary function symbol $f_j$:
\begin{align*}
f_j^{\mathcal{M}}: M^{n+1} &\to \B \\
(\tau_1,\dots,\tau_n,\sigma) &\mapsto \Qp{f_j(\tau_1,\dots, \tau_n)=\sigma}^{\mathcal{M}}_{\B}
\end{align*}
\end{enumerate}

We require that the following conditions hold:
\begin{itemize}
\item[] for $\tau,\sigma,\chi\in M$,
\begin{enumerate}[(i)]
\item \label{i} $\Qp{\tau=\tau}^{\mathcal{M}}_{\B}=1_{\B}$;
\item $\Qp{\tau=\sigma}^{\mathcal{M}}_{\B}=\Qp{\sigma=\tau}^{\mathcal{M}}_{\B}$;
\item $\Qp{\tau=\sigma}^{\mathcal{M}}_{\B}
\wedge\Qp{\sigma=\chi}^{\mathcal{M}}_{\B} \le \Qp{\tau=\chi}^{\mathcal{M}}_{\B}$;
\end{enumerate}
\item[] for $R\in \mathcal{L}$ with arity $n$, and
$(\tau_1,\dots, \tau_n),(\sigma_1,\dots, \sigma_n) \in M^n$,
\begin{enumerate}[(iv)]
\item  $ ( \bigwedge_{h \in \left\{1, \dots, n  \right\} }
  \Qp{\tau_h=\sigma_h}^{\mathcal{M}}_{\B}  ) \wedge 
  \Qp{R(\tau_1, \dots, \tau_n)}^{\mathcal{M}}_{\B}
    \le \Qp{ R(\sigma_1, \dots, \sigma_n)}^{\mathcal{M}}_{\B} $;
\end{enumerate}
\item[] for $f_j \in \mathcal{L}$ with arity $n$, and
$(\tau_1,\dots, \tau_n),(\sigma_1,\dots, \sigma_n) \in M^n$
and $\mu,\nu \in M$,
\begin{enumerate}[(i)] \setcounter{enumi}{4}
\item $( \bigwedge_{h \in \left\{ 1, \dots, n \right\} } \Qp{ \tau_h = \sigma_h}^{\mathcal{M}}_{\B}  ) 
\wedge  \Qp{f_j(\tau_1, \dots, \tau_n) = \mu }^{\mathcal{M}}_{\B}
 \le  \Qp{f_j(\sigma_1, \dots, \sigma_n) = \mu}^{\mathcal{M}}_{\B} $;
\item \label{vi} $\bigvee_{\mu\in M} \Qp{f_j(\tau_1, \dots, \tau_n) = \mu}^{\mathcal{M}}_{\B} = 1_{\B}$;
\item \label{vii} $ \Qp{f_j(\tau_1, \dots, \tau_n)=\mu}^{\mathcal{M}}_{\B}  
\wedge  \Qp{f_j(\tau_1, \dots, \tau_n)=\nu}^{\mathcal{M}}_{\B} \le \Qp{\mu=\nu}^{\mathcal{M}}_{\B} $.
\end{enumerate}
\end{itemize}
If no confusion can arise, we will omit the subscript $\B$ and the superscript $\mathcal{M}$ 
and we will confuse a function 
or predicate symbol with its interpretation.

Given a $\B$-valued model $\langle M,=^M\rangle$ for the equality, a forcing relation $R$ on $M$ is a
map $R:M^n\to\B$ satisfying condition $(iv)$ above for boolean predicates.  
\end{definition}

We now define the relevant maps between these objects.

\begin{definition}\label{mor}
Let $\mathcal{M}$ be a $\B$-valued model and $\mathcal{N}$ a 
$\mathsf{C}$-valued model in the same language
$\mathcal{L}$. Let
\[  
i: \B \to \mathsf{C}
\]
be a morphism of boolean algebras and $\Phi \subseteq M \times N$ a relation. 
The couple $\langle i, \Phi \rangle$
is a \emph{morphism} of boolean valued models if:
\begin{enumerate}
\item \label{n1} $\text{dom}\Phi=M$;
\item \label{n2} given $(\tau_1,\sigma_1),(\tau_2,\sigma_2) \in \Phi$: 
\[ 
 i(\Qp{\tau_1=\tau_2}^{\mathcal{M}}_{\B}) \le
 \Qp{\sigma_1=\sigma_2}^{\mathcal{N}}_{\mathsf{C}},
 \]
\item \label{n3} given $R$ an $n$-ary relation symbol and
$(\tau_1,\sigma_1),\dots,(\tau_n,\sigma_n) \in \Phi$:
 \[
  i(\Qp{R(\tau_1,\dots,\tau_n)}^{\mathcal{M}}_{\B}) \le
  \Qp{R(\sigma_1,\dots,\sigma_n)}^{\mathcal{N}}_{\mathsf{C}},
  \]
\item \label{n4} given $f$ an $n$-ary function symbol and
$(\tau_1,\sigma_1),\dots,(\tau_n,\sigma_n),(\mu,\nu) \in \Phi$: 
\[
 i(\Qp{f(\tau_1,\dots,\tau_n)=\mu}^{\mathcal{M}}_{\B}) \le
 \Qp{f(\sigma_1,\dots,\sigma_n)=\nu}^{\mathcal{N}}_{\mathsf{C}},
  \]
\end{enumerate}

An \emph{injective morphism} is a morphism such that in \ref{n2} equality holds.

An \emph{embedding} of boolean valued models is an injective morphism such that in
\ref{n3} and \ref{n4} equality holds.

An embedding $\langle i,\Phi \rangle$ from $\mathcal{M}$ to $\mathcal{N}$ 
is called \emph{isomorphism} of boolean valued models
if $i$ is an isomorphism of boolean algebras, and
for every $b \in N$ there is $a \in M$ such that $(a,b)\in \Phi$.

Suppose $\mathcal{M}$ is a $\B$-valued model and 
$\mathcal{N}$ a $\mathsf{C}$-valued model (both in the same
language $\mathcal{L}$) such that $\B$ is a complete subalgebra of $\mathsf{C}$ and
$M\subseteq N$. Let $J$ be the immersion of $\mathcal{M}$ in $\mathcal{N}$.
$\mathcal{N}$ is said to be a 
\emph{boolean extension} of $\mathcal{M}$ if $\langle id_{\B}, J \rangle$ is an
embedding of boolean valued models.
\end{definition}
\begin{remark}\label{standard}
When $\B=\mathsf{C}$ we will consider $i=id_{\B}$ unless otherwise stated.
\end{remark}

Since we are allowing function symbols in $\mathcal{L}$, the definition of the semantic
is a bit more intricate than in the case of a purely relational language.

\begin{definition}\label{BVS}
Given a $\B$-valued model $\mathcal{M}$ in a language $\mathcal{L}$,
let $\phi$ be an $\mathcal{L}$-formula
whose free variables are in $\set{x_1,\dots,x_n}$, and let $\nu$ be a
valuation of the free variables in $\mathcal{M}$ whose
domain contains $\set{x_1,\dots,x_n}$.
We denote with $\Qp{\phi(\nu)}^{\mathcal{M}}_{\B}$ the \emph{boolean value} of $\phi(\nu)$.

First, let $t$ be an $\mathcal{L}$-term and $\tau\in M$; we define recursively
$\Qp{(t=\tau)(\nu)}^{\mathcal{M}}_{\B}\in \B$ as follows:
\begin{itemize}
\item if $t$ is a variable $x$, then 
\[ 
\Qp{(x=\tau)(\nu)}^{\mathcal{M}}_{\B}=\Qp{\nu(x)=\tau}^{\mathcal{M}}_{\B} \]
\item if $t=f(t_1,\dots,t_n)$ where $t_i$ are terms and $f$ is an $n$-ary function symbol, then 
\[
 \Qp{(f(t_1,\dots,t_n)=\tau)(\nu)}^{\mathcal{M}}_{\B}= \bigvee_{\sigma_1,\dots,\sigma_n\in M} 
 \cp{\bigwedge_{1\le i \le n} \Qp{(t_i=\sigma_i)(\nu)}^{\mathcal{M}}_{\B}}   
 \wedge \Qp{f(\sigma_1,\dots,\sigma_n)=\tau}^{\mathcal{M}}_{\B}
  \]
\end{itemize}
Given a formula $\phi$, we define recursively $\Qp{\phi(\nu)}^{\mathcal{M}}_{\B}$ as follows:
\begin{itemize}
\item if $\phi \equiv t_1=t_2$, then 
\[
\Qp{(t_1=t_2)(\nu)}^{\mathcal{M}}_{\B} = \bigvee_{\tau \in M}\Qp{(t_1=\tau)(\nu)}^{\mathcal{M}}_{\B}
 \wedge \Qp{(t_2=\tau)(\nu)}^{\mathcal{M}}_{\B} 
\]
\item if $\phi \equiv R(t_1,\dots,t_n)$, then 
\[ 
\Qp{(R(t_1,\dots,t_n))(\nu)}^{\mathcal{M}}_{\B} = \bigvee_{\tau_1,\dots,\tau_n\in M}
 \cp{\bigwedge_{1\le i \le n} \Qp{(t_i=\tau_i)(\nu)}^{\mathcal{M}}_{\B}} 
 \wedge \Qp{R(\tau_1,\dots,\tau_n)}^{\mathcal{M}}_{\B}
 \]
\item if $\phi\equiv \lnot \psi$, then 
\[ 
\Qp{\phi(\nu)}^{\mathcal{M}}_{\B}=\lnot \Qp{\psi(\nu)}^{\mathcal{M}}_{\B}
\]
\item if $\phi \equiv \psi \wedge \theta$, then
 \[
\Qp{\phi(\nu)}^{\mathcal{M}}_{\B}=\Qp{\psi(\nu)}^{\mathcal{M}}_{\B}
\wedge \Qp{\theta(\nu)}^{\mathcal{M}}_{\B} 
\]
\item if $\phi\equiv \exists y \psi(y)$, then 
\[
 \Qp{\phi(\nu)}^{\mathcal{M}}_{\B}= \bigvee_{\tau\in M} \Qp{\psi(y/\tau,\nu)}^{\mathcal{M}}_{\B}
 \]
\end{itemize}
If no confusion can arise, we omit the index $\mathcal{M}$ and the subscript $\B$, and we simply denote
the boolean value of a formula $\phi$ with parameters in $\mathcal{M}$ by $\Qp{\phi}$.
\end{definition}

By definition, an isomorphism of boolean valued models preserves the boolean value of the 
atomic formulas.
Proceeding by induction on the complexity, one can get the result for any formula.

\begin{proposition}
Let $\mathcal{M}$ be a $\B$-valued model and $\mathcal{N}$ a $\mathsf{C}$-valued model in the same
language $\mathcal{L}$. Assume $\langle i, \Phi \rangle$ is an isomorphism of boolean valued models.
Then for any $\mathcal{L}$-formula $\phi(x_1,\dots,x_n)$, 
and for every $(\tau_1,\sigma_1), \dots, (\tau_n,\sigma_n) \in \Phi$ we have that:
\[
 i(\Qp{\phi(\tau_1,\dots,\tau_n)}^{\mathcal{M}}_{\B})=
 \Qp{\phi(\sigma_1,\dots,\sigma_n)}^{\mathcal{N}}_{\mathsf{C}}
  \]
\end{proposition}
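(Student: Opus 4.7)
The natural strategy is induction on syntactic complexity, handled in two passes: first for terms, then for formulas. The structural facts I would rely on are: (a) an isomorphism $i$ of boolean algebras between two complete boolean algebras is automatically an order isomorphism, hence preserves arbitrary suprema and infima; (b) $\Phi$ has full domain $M$ and full range $N$; (c) by the definition of embedding (clauses \ref{n2}--\ref{n4} strengthened to equalities) the relations $=$, $R_i$ and $f_j$ are preserved exactly by $\langle i,\Phi\rangle$ on pairs in $\Phi$.

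\emph{Step 1 (terms).} I would first prove, by induction on term complexity, that whenever $(\tau_1,\sigma_1),\dots,(\tau_n,\sigma_n),(\mu,\nu)\in\Phi$ then
\[
i\!\left(\Qp{(t=\mu)(\tau_1,\dots,\tau_n)}^{\mathcal{M}}_{\B}\right)=\Qp{(t=\nu)(\sigma_1,\dots,\sigma_n)}^{\mathcal{N}}_{\mathsf{C}}.
\]
The variable case is clause \ref{n2} of Definition \ref{mor} upgraded to equality (isomorphism hypothesis). For $t=f(t_1,\dots,t_k)$, I would expand the left-hand side with the recursive clause in Definition \ref{BSV... actually} \ref{BVS}, push $i$ through the (arbitrary) join and the finite meet, and then rewrite each summand indexed by $(\chi_1,\dots,\chi_k)\in M^k$ by choosing, via surjectivity of $\Phi$, witnesses $(\chi_h,\chi'_h)\in\Phi$; the inductive hypothesis converts the $\Qp{t_i=\chi_i}^{\mathcal{M}}$ factors and clause \ref{n4} (with equality) converts the $\Qp{f(\chi_1,\dots,\chi_k)=\mu}^{\mathcal{M}}$ factor. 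For the reverse inequality I would use surjectivity of $\Phi$ to show that ranging over $(\chi'_1,\dots,\chi'_k)\in N^k$ on the $\mathcal{N}$-side produces no genuinely new summand, since condition (i) of Definition \ref{str2} together with the preservation clauses collapses any $\chi'_h$ that is not matched by some $\chi_h$.

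\emph{Step 2 (formulas).} Proceeding by induction on formula complexity: atomic equality and relation cases follow from Step 1 together with the injectivity of $\langle i,\Phi\rangle$ on equality (equality in \ref{n2}) and from clause \ref{n3} with equality. For $\neg\psi$ and $\psi\wedge\theta$ the induction is immediate, because $i$, being a boolean algebra isomorphism, commutes with $\neg$ and $\wedge$. For $\exists y\,\psi(y,\vec x)$, I would compute
\[
i\!\left(\Qp{\exists y\,\psi(y,\vec\tau)}^{\mathcal{M}}_{\B}\right)=i\!\left(\bigvee_{\tau\in M}\Qp{\psi(\tau,\vec\tau)}^{\mathcal{M}}_{\B}\right)=\bigvee_{\tau\in M}i\!\left(\Qp{\psi(\tau,\vec\tau)}^{\mathcal{M}}_{\B}\right),
\]
then for each $\tau\in M$ pick $\sigma\in N$ with $(\tau,\sigma)\in\Phi$ (which exists because $\mathrm{dom}\,\Phi=M$) and apply the inductive hypothesis to replace each term by $\Qp{\psi(\sigma,\vec\sigma)}^{\mathcal{N}}_{\mathsf{C}}$. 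The inequality $\le$ follows from this; for $\ge$ I would, conversely, use the surjectivity of $\Phi$ on $N$ to match each $\sigma\in N$ with some $\tau\in M$.

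The main obstacle is really concentrated in two places that have essentially the same flavor: the function-symbol step of the term induction and the existential quantifier step for formulas. Both require swapping $i$ with an infinite supremum indexed by $M$ (or $N$), and in both the passage between the two index sets is mediated by $\Phi$; the surjectivity of $\Phi$ (which distinguishes an \emph{isomorphism} from a mere embedding) and the completeness of $i$ as an order isomorphism are exactly the ingredients that make the argument go through, so the verification reduces to carefully bookkeeping the witnesses chosen on each side.
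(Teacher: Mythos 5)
Your proposal is correct and follows essentially the same route as the paper, which proves the atomic case directly from the definition of isomorphism and then inducts on complexity; your elaboration of the two delicate points (pushing the boolean isomorphism $i$ through infinite joins, and matching index sets $M^k$ and $N^k$ via $\dom\Phi=M$ plus surjectivity of $\Phi$) is exactly what the paper leaves implicit. One small remark: in the reverse inequality of the function-symbol step, surjectivity of $\Phi$ already matches \emph{every} tuple $(\chi'_1,\dots,\chi'_k)\in N^k$ with some tuple in $M^k$, so your additional appeal to condition (i) of Definition \ref{str2} to ``collapse unmatched witnesses'' is unnecessary.
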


With elementary arguments it is
possible to prove the Soundness Theorem also for boolean
valued models.

\begin{theorem}[Soundness Theorem] \label{soundness}
Assume that $\phi$ is an $\mathcal{L}$-formula which
is syntactically provable by
an $\mathcal{L}$-theory $T$, and that each formula in $T$ has boolean
value at least $b\in \B$ in a $\B$-valued model $\mathcal{M}$.
Then $\Qp{\phi(\nu)}^{\mathcal{M}}_\B\geq b$ for all valuations $\nu$ in $\mathcal{M}$.
\end{theorem}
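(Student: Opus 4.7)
The plan is to prove the theorem by induction on the length of a Hilbert-style syntactic derivation of $\phi$ from $T$. This reduces the statement to two sub-tasks: first, I would show that every logical axiom of the chosen deductive system has boolean value $1_\B$ in $\mathcal{M}$ under every valuation (so trivially $\geq b$); second, I would show that each inference rule --- modus ponens and universal generalization --- preserves the property ``boolean value $\geq b$''. Axioms from $T$ satisfy the conclusion by hypothesis, so the induction then closes.

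The inference rules are handled by short computations inside $\B$. For modus ponens, if $\Qp{\psi(\nu)}\geq b$ and $\Qp{(\psi\to\chi)(\nu)}=\lnot\Qp{\psi(\nu)}\vee\Qp{\chi(\nu)}\geq b$, then distributivity gives $b\leq \Qp{\psi(\nu)}\wedge\Qp{(\psi\to\chi)(\nu)}\leq \Qp{\chi(\nu)}$. For generalization, Definition~\ref{BVS} together with De Morgan yields $\Qp{\forall x\,\psi(\nu)}=\bigwedge_{\tau\in M}\Qp{\psi(x/\tau,\nu)}$, which is $\geq b$ as soon as every conjunct is. The propositional axioms then reduce to tautologies valid in any boolean algebra, and the equality axioms translate directly into conditions (i)--(iv) of Definition~\ref{str2}, extended from names to compound terms and to atomic formulas by a routine induction on term complexity.

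The hard part will be verifying the quantifier axioms $\forall x\,\psi(x)\to\psi(t)$ and $\psi(t)\to\exists x\,\psi(x)$ when $t$ is a compound term containing function symbols, since function symbols are interpreted only indirectly via conditions (v)--(vii) rather than as genuine operations on $M$. My plan is to first prove a substitution lemma of the form
\[
\Qp{\psi(t/x)(\nu)}=\bigvee_{\tau\in M}\Qp{(t=\tau)(\nu)}\wedge\Qp{\psi(x/\tau,\nu)},
\]
by a simultaneous induction on the complexity of $\psi$ and of $t$. The key ingredients are condition (vi), which guarantees $\bigvee_{\tau\in M}\Qp{(t=\tau)(\nu)}=1_\B$ so that no boolean mass is lost while evaluating $t$, and conditions (iv)--(v), which make atomic predicates and function symbols behave consistently when their name arguments are renamed. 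Once this substitution lemma is in place, the quantifier axioms become elementary supremum/infimum inequalities in $\B$, and the inductive proof of soundness concludes.
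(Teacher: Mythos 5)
Your proposal is correct, but there is nothing in the paper to compare it against: the authors explicitly omit the proof, remarking only that ``with elementary arguments it is possible to prove the Soundness Theorem also for boolean valued models'' and deferring to Rasiowa's book for the complete theory. Your outline is precisely the standard argument found there, and you have correctly isolated the one genuinely non-trivial point, namely the substitution lemma forced on you by the fact that Definition~\ref{str2} interprets function symbols only through their graphs via conditions (v)--(vii), rather than as operations on $M$. Two small points deserve explicit mention when you write this up. First, in the negation step of your simultaneous induction you need
\[
\bigvee_{\tau\in M}\cp{\Qp{(t=\tau)(\nu)}\wedge\lnot\Qp{\psi(x/\tau,\nu)}}
=\lnot\bigvee_{\tau\in M}\cp{\Qp{(t=\tau)(\nu)}\wedge\Qp{\psi(x/\tau,\nu)}},
\]
which holds because the left side and the negated join are disjoint (by the congruence conditions (iii)--(v) applied to $\Qp{\tau=\sigma}$) and join to $1_\B$ (by your observation from (vi) that no boolean mass is lost); both this step and the quantifier axioms also use the infinite distributive law $a\wedge\bigvee_i b_i=\bigvee_i(a\wedge b_i)$, valid in every complete boolean algebra. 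Second, unrestricted generalization is sound here only because the theorem's hypothesis and conclusion both quantify over \emph{all} valuations $\nu$, so the induction hypothesis already gives $\Qp{\psi(x/\tau,\nu)}\geq b$ for every $\tau\in M$; with a hypothesis about a single fixed valuation the rule would fail, so you should state that you prove the claim for all valuations simultaneously.
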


We get a first order model from a $\B$-valued model passing to a 
quotient by an ultrafilter $G\subseteq \B$. This corresponds for spaces of type 
$C^+(St(\B))$ to a specialization of the space to the
ring of germs in $G$. In the general context it is defined as follows.

\begin{definition} \label{ultraquot}
Let $\B$ a complete boolean algebra, $\mathcal{M}$ a $\B$-valued model in the language 
$\mathcal{L}$, and $G$ an ultrafilter over $\B$.
Consider the following equivalence relation on $M$:
\[ 
\tau \equiv_G \sigma \Leftrightarrow \Qp{\tau=\sigma}\in G 
\]
The first order model
$\mathcal{M}/G =\langle M/G, =^{\mathcal{M}/G}, R_i^{\mathcal{M}/G} : i \in I,
f_j^{\mathcal{M}/G} : j \in J \rangle$ is defined letting:
\begin{itemize}
\item For any  $n$-ary relation symbol $R$ in $\mathcal{L}$
\[
R^{\mathcal{M}/G}=\set{([\tau_1]_G,\dots,[\tau_n]_G)\in (M/G)^n: \Qp{R(\tau_1,\dots,\tau_n)}\in G}.
\]
\item For any  $n$-ary function symbol $f$ in $\mathcal{L}$
\begin{align*}
f^{\mathcal{M}/G}: (M/F)^n &\to M/G \\
([\tau_1]_G,\dots,[\tau_n]_G) &\mapsto [\sigma]_G.
\end{align*}
where $\sigma$ is such that $\Qp{f(\tau_1,\dots,\tau_n)=\sigma} \in G$. 
Def.~\ref{str2}(\ref{vii}) guarantees that this function is well defined.
\end{itemize}
\end{definition}

If we require $\mathcal{M}$ to satisfy a key additional condition, we get an easy
way to control the truth value of formulas in $\mathcal{M}/G$.

\begin{definition}
A $\B$-valued model $\mathcal{M}$ for the language $\mathcal{L}$ is \emph{full} if for every
$\mathcal{L}$-formula $\phi(x,\vec{y})$ and every $\vec{\tau} \in M^{\lvert \vec{y} \rvert}$
there is a $\sigma\in M$ such that
\[
 \Qp{\exists x \phi(x, \vec{\tau})}  =  \Qp{\phi(\sigma, \vec{\tau}) }
 \] 
\end{definition}

\begin{theorem}[Boolean Valued Models \L o\'{s}'s Theorem] \label{los}
Assume $\mathcal{M}$ is a full $\B$-valued model for the language $\mathcal{L}$.
Then for every formula $\phi(x_1, \dots, x_n)$ in $\mathcal{L}$ and $(\tau_1,\dots,\tau_n)\in M^n$: 
\begin{enumerate}[(i)]
\item 
For all ultrafilters $G$ over $\B$,
$\mathcal{M}/G\models \phi([\tau_1]_G,\dots,[\tau_n]_G)$ if and only if
$\Qp{\phi(\tau_1,\dots,\tau_n)}\in G$.
\smallskip

\item
For all $a\in \B$ the following are equivalent:
\begin{enumerate}[(a)]
\item
$\Qp{\phi(f_1,\dots,f_n)}\geq a$,
\item
$\mathcal{M}/G\models \phi([\tau_1]_G,\dots,[\tau_n]_G)$ for all $G\in \mathcal{O}_a$,
\item
$\mathcal{M}/G\models \phi([\tau_1]_G,\dots,[\tau_n]_G)$ for densely many $G\in \mathcal{O}_a$.
\end{enumerate}
\end{enumerate}
\end{theorem}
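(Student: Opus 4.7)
\medskip

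The plan is to prove (i) by induction on formula complexity and then derive (ii) as a short topological corollary. Because the language contains function symbols, a preliminary induction on terms must come first: for any term $t(\vec{x})$ and valuation $\nu$ of its variables in $M$, I will define its interpretation in $\mathcal{M}/G$ as $[\sigma]_G$, where $\sigma \in M$ is any element with $\Qp{t(\nu)=\sigma}\in G$. For a variable, $\sigma=\nu(x)$ works by (i) of Definition~\ref{str2}. For $t\equiv f(t_1,\dots,t_n)$, apply fullness to the formula $\exists x\,(f(\vec{y})=x)$: combined with condition (vi), this produces $\sigma\in M$ with $\Qp{f(\vec\tau)=\sigma}=1_\B$, where the $\tau_i$ are witnesses provided by the inductive hypothesis on the $t_i$. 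Uniqueness of $[\sigma]_G$ modulo $\equiv_G$ is exactly condition (vii), and well-definedness under the choice of the $\tau_i$ uses conditions (iv)--(v). A routine unwinding of Definition~\ref{BVS} then shows that $\Qp{(t_1=t_2)(\nu)}\in G$ iff the interpretations of $t_1$ and $t_2$ in $\mathcal{M}/G$ coincide, and similarly for $R(\vec t)$.

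The induction on formulas is then immediate for the propositional connectives: negation uses that $G$ is an ultrafilter (so $\neg b\in G \iff b\notin G$), and conjunction uses that $G$ is a filter. The existential step is where fullness is decisive: given $\exists x\,\psi(x,\vec\tau)$, pick by fullness $\sigma\in M$ with $\Qp{\exists x\,\psi(x,\vec\tau)}=\Qp{\psi(\sigma,\vec\tau)}$. Then
\[
\Qp{\exists x\,\psi(x,\vec\tau)}\in G \iff \Qp{\psi(\sigma,\vec\tau)}\in G \iff \mathcal{M}/G\models\psi([\sigma]_G,\vec{[\tau]}_G) \iff \mathcal{M}/G\models\exists x\,\psi(x,\vec{[\tau]}_G),
\]
the middle equivalence being the inductive hypothesis and the last following since any witness $[\tau]_G$ in $\mathcal{M}/G$ pulls back to give $\Qp{\psi(\tau,\vec\tau)}\leq\Qp{\exists x\,\psi}$ in $G$.

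For part (ii), (a)$\Rightarrow$(b) is the observation that if $\Qp{\phi}\geq a$ and $a\in G$, then $\Qp\phi\in G$, and (i) applies. The implication (b)$\Rightarrow$(c) is trivial. For (c)$\Rightarrow$(a), argue by contraposition: if $\Qp\phi\not\geq a$, then $b:=a\wedge\neg\Qp\phi\neq 0_\B$, so $\mathcal{O}_b$ is a nonempty clopen subset of $\mathcal{O}_a$. By density there exists $G\in\mathcal{O}_b$ with $\mathcal{M}/G\models\phi$; but $b\in G$ forces $\neg\Qp\phi\in G$, contradicting $\Qp\phi\in G$, which (i) would deliver from $\mathcal{M}/G\models\phi$.

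The main obstacle is the term step: without fullness one cannot guarantee that the supremum in clause~(vi) is attained, so some ultrafilters could fail to pick out any single $\sigma$ representing the value of a compound term, and the quotient structure $\mathcal{M}/G$ would not even interpret the function symbols coherently. Once fullness is invoked as a Skolemization device, both the term analysis and the existential case for formulas proceed in parallel, and the rest of the argument is standard ultrafilter and Stone-space bookkeeping.
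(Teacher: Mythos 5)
Your proposal is correct, and it is the standard ultrafilter-quotient induction; the paper in fact states this theorem without proof (deferring to Rasiowa--Sikorski and to Chapter 3 of the first author's thesis), so there is no in-paper argument for it to diverge from. One point in your write-up is actually more careful than the paper itself: in Definition~\ref{ultraquot} the paper invokes only condition (vii) of Definition~\ref{str2} to justify that $f^{\mathcal{M}/G}$ is well defined, which settles uniqueness of $[\sigma]_G$ but not the \emph{existence} of some $\sigma$ with $\Qp{f(\tau_1,\dots,\tau_n)=\sigma}\in G$ for an arbitrary (non-generic) ultrafilter $G$, since the supremum in clause (vi) need not be attained; your Skolemization of $\exists x\,(f(\vec{y})=x)$ via fullness, with condition (v) used to pass from the semantic value $\Qp{(f(\vec{\tau})=\sigma)}=1_{\B}$ of Definition~\ref{BVS} down to the primitive interpretation of the function symbol, is exactly what fills this in. The rest --- the atomic and connective cases, fullness at the existential step, and the Stone-space argument for (ii), where in (c)$\Rightarrow$(a) the nonempty clopen set $\mathcal{O}_{a\wedge\neg\Qp{\phi}}$ must meet the dense set of satisfying ultrafilters --- is exactly as it should be.
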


\subsection{$C^+(St(\B))$ as a boolean valued extension of $\C$}\label{section:ext}
%We recall that, given $\B$ a boolean algebra, we denote
%with $St(\B)$ its Stone space,
%namely the set $\set{G\subseteq \B: G \text{ is a ultrafilter}}$ with the topology
%generated by the family $\set{\mathcal{O}_a}_{a \in \B}$, where
%\[
%\mathcal{O}_a=\set{G \in St(\B): a \in G}
%\]
%This is a compact Hausdorff space and $\set{\mathcal{O}_a}_{a \in \B}$
%is a base of clopen sets. Moreover, $\B$ is complete if and only if in $St(\B)$ clopen sets
%and regular open sets overlap. In such case, we get
%\[
%\B \cong \RO(St(\B))
%\]
%where we consider $\RO(St(\B))=\set{A\subseteq St(\B): \mathring{\overline{A}}=A}$
%as a complete boolean algebra as defined in 

The following example shows how to
obtain a boolean extension of a topological space $Y$ for a
language composed by symbols which are interpreted as Borel subsets
of $Y^n$.

\begin{example}\label{exmp:key}
Fix a complete boolean algebra $\B$ and a topological space $Y$ such that 
\[
\Delta_Y=\set{(x,x)\in Y\times Y: x \in Y}
\]
is Borel
on $Y\times Y$.
Consider $M=C(St(\B),Y)$, the
set of continuous functions from $St(\B)$ to $Y$. 

We define a structure of $\B$-valued
extension
of $Y$ on $M$ for the language with equality as follows.
Given $f,g \in M$, the set
\[
W=\set{G \in St(\B):f(G)=g(G)}
\]
is a Borel subset of $St(\B)$ since both $f$ and $g$ are continuous.
Recall that $A\subseteq Y$ is meager if it is contained in a countable union of closed nowhere dense sets,
and that
$A$ has the Baire property if $U\Delta A$ is meager for some (unique) regular open set $U$.
Since every Borel set $B$ has the Baire property \cite[Lemma 11.15]{Jech}, and $St(\B)$ is compact Hausdorff,
by~\cite[Chapter $29$, Lemma $5$]{halmos}, we get that
\[
\intclosure{\set{G \in St(\B):f(G)=g(G)}}
\]
is the unique regular open set with a meager symmetric difference with $W$. 
Identifying $\B$ with
$\RO(St(\B))$ ($\B$ is complete), we have that 
\[
=^{St(\B)}(f,g)=\Qp{f=g}^{St(\B)}=\intclosure{\set{G \in St(\B):f(G)=g(G)}}
\]
is a well defined element of $\B$ and satisfies the clauses of Def. \ref{str2} for the equality relation.
For any Borel $R\subseteq Y^n$, the predicate $R^{St(\B)}:C(St(\B),Y)^n\to\B$ defined by
\[
R^{St(\B)}(f_1,\dots,f_n)=\Qp{R(f_1,\dots,f_n)}^{St(\B)}=\intclosure{\set{G\in St(\B):R(f_1(G),\dots,f_n(G))}}
\]
is a forcing relation $R$ satisfying the clauses of 
Def.~\ref{str2}.
Similarly we can lift Borel functions $F:Y^n\to Y$.

With these definitions it can be checked that 
\[
\mathcal{M}=\langle C(St(\B),Y),=^{St(\B)},R_i^{St(\B)}:i\in I, F_j^{St(\B)}:j\in J\rangle
\] 
is a $\B$-valued model for the signature
given by the Borel relations $\langle R_i:i\in I\rangle$ and Borel functions $\langle F_j:j\in J\rangle$ chosen on $Y$.
Moreover the set $\set{c_x \in M:x \in Y}$, where $c_x$ is the constant
function with value $x$, is a copy of $Y$ in $M$, i.e: the complete homomorphism given by the inclusion of
$2$ in $\B$ induces an embedding of the $2$-valued model $\langle Y,=,R_i:i\in I, F_j:j\in J\rangle$ 
into the $\B$-valued model $\mathcal{M}$
mapping $x\mapsto c_x$ (however we do not \emph{as yet} assert that this embedding preserves the truth of
formulas with quantifiers).
Thus we can infer that $\mathcal{M}$ is a $\B$-valued extension of an isomorphic
copy of $Y$ seen as a
$2$-valued structure in a relational language with relation symbols interpreted
as Borel subsets of $Y^n$.

Finally if $G$ is an ultrafilter on $St(\B)$, i.e. a point of $St(\B)=X$, we can define the ring $C(X,Y)/G$ of germs in
$C(X,Y)$ letting 
\[
[f]_G=\set{g:\Qp{f=g}^{St(\B)}\in G}
\] 
and $R^{St(\B)}([f_1]_G,\dots,[f_n]_G])$ iff
$R^{St(\B)}(f_1,\dots,f_n)\in G$. We can easily check that the map
$x\mapsto [c_x]_G$ defines an embedding of $2$-valued models of $\langle Y,=,R_i:i\in I, F_j:j\in J\rangle$ into
$\mathcal{M}/G$.

If $Y$ is Polish (i.e. second countable and completely metrizable) $\Delta_{Y}$ is closed
($Y$ is Hausdorff), therefore, for any fixed language $\mathcal{L}$
whose elements are Borel relations and functions on $Y$, we can define
a structure of $\B$-valued extension of $Y$ for the language $\mathcal{L}$.
If $Y=\C$, the domain of such extension is the $C^* \!$-algebra $C(St(\B))$ with extremally disconnected spectrum.

It can be checked that if $Y$ is compact,
$C(St(\B),Y)$ endowed with suitable lifting of Borel predicates 
is a full $\B$-valued model, while if
$Y$ is not compact and contains an infinite set with discrete relative topology (e.g. $\mathbb{N}$ as a subset of $\C$),
$C(St(\B),Y)$ is not a full $\B$-valued model (see Remark \ref{rmk:C+xvsCX} below). 
\end{example}

The latter observation is one of the compelling reasons which lead us to
associate to
$\C$ (which is Polish non-compact, locally compact)
the space of functions $C^+(St(\B))$ (which we show to be a \emph{full} $\B$-valued model). 
Similar tricks will be needed to properly describe the full boolean extensions
of arbitrary (non-compact) Polish spaces by means of spaces of functions.

We resume the above observations in the following definition:
\begin{definition}\label{def:C+X}
Let $X$ be a compact Hausdorff extremally disconnected topological space.

\begin{enumerate}[(i)]
\item Let $Y$ be a topological space such that $\Delta_Y$ is Borel in $Y^2$. 
For any Borel relation $R$ on $Y^n$, $R^X:C(X,Y)^n\to \CL(X)$ maps
$(f_1,\dots,f_n)$ to the clopen set
\[
\intclosure{\set{G\in X: R(f_1(G),\dots,f_n(G))}}.
\]
The lifting of Borel functions on $Y$ to $C(X,Y)$ is obtained by lifting their graph to a forcing relation
on $C(X,Y)$.
\item
We let $C^+(X)$ be the space of continuous
functions
\[
f:X\to \mathbb{S}^2=\mathbb{C}\cup\{\infty\}
\] 
(where $\mathbb{S}^2$ is seen as the one point
compactification of $\mathbb{C}$) with the property that $f^{-1}[\{\infty\}]$ is a closed nowhere dense
subset of $X$.
We lift Borel relations $R\subseteq \mathbb{C}^n$ to $R^{X}$ again letting 
\[
R^X(f_1,\dots,f_n)=\intclosure{\set{G\in X: R(f_1(G),\dots,f_n(G))}}.
\]
\end{enumerate}
We let $\langle C(X)/G,R^X/G\rangle$ and $\langle C^+(X)/G,R^X/G\rangle$ be the associated ring of germs with
$R^X/G$ defined for both rings by the requirement: $R^{X}([f_1]_G,\dots,[f_n]_G])$ iff
$G \in R^{X}(f_1,\dots,f_n)$.
 
\end{definition}

We have the following Lemmas:
\begin{lemma}[Mixing Lemma]  \label{mixing:lemma}
Assume $\B$ is a complete boolean algebra and
$A\subseteq\B$ is an antichain.
Then for all families $\{f_a:a\in A\}\subseteq C^+(St(\B))$, there exists 
$f\in C^+(St(\B))$ such that 
\[
a\leq\llbracket f=f_a\rrbracket
\]
for all $a\in A$.
\end{lemma}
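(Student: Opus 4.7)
The plan is to work in the Stone space $X=St(\B)$, where the antichain $A$ corresponds to a pairwise disjoint family $\set{\mathcal{O}_a:a\in A}$ of clopen sets, and to build $f$ by a continuous extension argument. Set $U=\bigcup_{a\in A}\mathcal{O}_a$, $W=\mathcal{O}_{\bigvee_\B A}$, and $V=X\setminus W$. By the Fact recalled in the excerpt, $U$ is dense open in $W$, so $U\cup V$ is open dense in $X$ and its complement $W\setminus U$ is closed nowhere dense. First I would define an auxiliary function $g:U\cup V\to\mathbb{S}^2$ by $g|_{\mathcal{O}_a}=f_a|_{\mathcal{O}_a}$ and $g|_V\equiv 0$; this is continuous because the $\mathcal{O}_a$ and $V$ are pairwise disjoint clopens.

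The heart of the proof is extending $g$ continuously to all of $X$. For each $x\in W\setminus U$ I would form the cluster set
\[
C(x)=\bigcap\set{\overline{g(N\cap(U\cup V))}:N\ni x\text{ open in }X}\subseteq\mathbb{S}^2.
\]
As a filtered intersection of nonempty closed subsets of the compactum $\mathbb{S}^2$, $C(x)$ is nonempty. The key step, and the one I expect to be the main technical hurdle, is to show that $C(x)$ is always a singleton; this is where extremal disconnectedness of $X$ is used essentially. Indeed, if distinct $y_1,y_2$ both lay in $C(x)$, disjoint open neighborhoods $N_1,N_2\subseteq\mathbb{S}^2$ of them would yield disjoint open sets $g^{-1}[N_1],g^{-1}[N_2]$ in $X$, whose closures would both contain $x$, contradicting extremal disconnectedness. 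I then define $f(x)$ to be the unique point of $C(x)$ for $x\in W\setminus U$ and $f=g$ on $U\cup V$. Continuity of $f$ at $x\in W\setminus U$ follows by a regularity argument: given open $N\ni f(x)$, pick $N_0$ with $f(x)\in N_0\subseteq\overline{N_0}\subseteq N$; a finite-intersection argument leveraging $C(x)=\set{f(x)}$ produces an open $N'\ni x$ with $\overline{g(N'\cap(U\cup V))}\subseteq\overline{N_0}\subseteq N$, and then for any $x'\in N'$ (whether in $U\cup V$ or in $W\setminus U$) one reads off $f(x')\in\overline{g(N'\cap(U\cup V))}\subseteq N$.

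It remains to check that $f\in C^+(X)$ and that $a\leq\Qp{f=f_a}$ for every $a\in A$. The latter is immediate: $f=f_a$ pointwise on the regular open set $\mathcal{O}_a$, so $\mathcal{O}_a\subseteq\intclosure{\set{G:f(G)=f_a(G)}}$. For the former, $f^{-1}[\set{\infty}]$ is closed in $X$ by continuity, and if its interior $O$ were nonempty a short case analysis yields a contradiction: $O\cap\mathcal{O}_a\ne\emptyset$ for some $a$ would force $f_a\equiv\infty$ on a nonempty open subset of $\mathcal{O}_a$, contradicting $f_a\in C^+(X)$; $O\cap V\ne\emptyset$ would force $f$ to equal both $0$ and $\infty$ on $O\cap V$; and the only remaining possibility, $O\subseteq W\setminus U$, contradicts the nowhere-denseness of $W\setminus U$. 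Everything outside the singleton-cluster-set step is routine bookkeeping.
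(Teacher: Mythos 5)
Your proof is correct and follows essentially the same route as the paper, whose sketch defines $f$ by $f\restriction\mathcal{O}_a=f_a\restriction\mathcal{O}_a$ and $f\restriction\mathcal{O}_{\neg\bigvee A}=0$ and leaves the verification to the reader. Your cluster-set argument, using extremal disconnectedness (disjoint open sets have disjoint closures) to extend $g$ continuously across the closed nowhere dense set $\mathcal{O}_{\bigvee A}\setminus\bigcup_{a\in A}\mathcal{O}_a$, is precisely the ``check that $f$ is well defined and works'' step made explicit.
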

\begin{proof}
Sketch:
Let $f\in C^+(St(\B))$ be the unique function such that
$f\restriction \mathcal{O}_{(\neg\bigvee A)}=0$ and 
$f\restriction \mathcal{O}_a=f_a\restriction \mathcal{O}_a$ for all $a\in A$. Check that $f$ is well defined and works.
\end{proof}
\begin{lemma}[Fullness Lemma] \label{fullness:lemma}
Assume $\B$ is a complete boolean algebra.
Let $R_1,\dots,R_n$ be forcing relations on $C^+(St(\B))^{<\mathbb{N}}$.
Then for all formulas $\phi(x,\vec{y})$ in the language $\{R_1,\dots,R_n\}$ 
and all $\vec{f}\in C^+(St(\B))^n$, there exists $g\in C^+(St(\B))$ such that
\[
\Qp{\exists x\phi(x,\vec{f})}=\Qp{\phi(g,\vec{f})}.
\]
\end{lemma}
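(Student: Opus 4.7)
The plan is to follow the classical fullness argument available in any boolean valued model that satisfies a mixing principle, using Lemma~\ref{mixing:lemma} as the key ingredient. Let $b=\Qp{\exists x\,\phi(x,\vec f)}=\bigvee_{h\in C^+(St(\B))}\Qp{\phi(h,\vec f)}$ by definition of the boolean semantics. Consider
\[
D=\setm{a\in\B^+}{a\le \Qp{\phi(h,\vec f)}\text{ for some }h\in C^+(St(\B))}.
\]
First I would check that $D$ is dense below $b$: if $c\le b$ is positive, then by definition of $b$ as a supremum $c\wedge\Qp{\phi(h,\vec f)}\ne 0_\B$ for some $h$, and this meet lies in $D$.

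Next, by Zorn's lemma pick a maximal antichain $A\subseteq D$ and, for each $a\in A$, fix a witness $h_a\in C^+(St(\B))$ with $a\le\Qp{\phi(h_a,\vec f)}$. Maximality of $A$ in $D$ plus density of $D$ below $b$ force $\bigvee A=b$ (otherwise $b\wedge\lnot\bigvee A$ would be a positive element below $b$, and density would produce an element of $D$ disjoint from $A$, contradicting maximality). Apply the Mixing Lemma to the family $\{h_a:a\in A\}$ to obtain $g\in C^+(St(\B))$ with $a\le\Qp{g=h_a}$ for every $a\in A$.

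The remaining step is to show $a\le\Qp{\phi(g,\vec f)}$ for each $a\in A$; summing gives $b=\bigvee A\le\Qp{\phi(g,\vec f)}\le b$, hence equality. For this I need the substitution property
\[
\Qp{g=h}\wedge\Qp{\phi(h,\vec f)}\le\Qp{\phi(g,\vec f)},
\]
which I would prove by induction on the complexity of $\phi$. The base case for atomic formulas $R_i(\cdot)$ is exactly clause (iv) of Definition~\ref{str2}, which holds because each $R_i^X$ is a forcing relation by hypothesis; Boolean connectives and quantifiers are then handled by the standard manipulations with meets, joins and complements in a complete boolean algebra.

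The main obstacle is purely notational: carrying out the induction for substitution while keeping track of all free variables (since $\phi$ may involve many parameters, and the existential quantifier in the semantics of $\Qp{\exists x\,\psi}$ requires one to re-index witnesses). In spirit, however, nothing beyond the Mixing Lemma and the defining properties of a forcing relation is used; the argument works uniformly for any language of forcing relations on $C^+(St(\B))^{<\mathbb{N}}$, and the specifically topological content of $C^+(St(\B))$ enters only through the Mixing Lemma.
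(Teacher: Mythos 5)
Your argument is correct and is essentially the paper's own proof, which is given there only as a sketch: find a maximal antichain among the conditions $a$ with $a\le\Qp{\phi(h_a,\vec f)}$ for some witness $h_a$, patch the witnesses via the Mixing Lemma, and check the resulting $g$ works. The details you supply — density of $D$ below $b$, the identity $\bigvee A=b$, and the substitution property $\Qp{g=h}\wedge\Qp{\phi(h,\vec f)}\le\Qp{\phi(g,\vec f)}$ proved by induction on complexity from clause (iv) of Definition~\ref{str2} — are precisely what the paper's ``check that'' elides, so nothing genuinely different is happening.
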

\begin{proof}
Sketch:
Find $A$ maximal antichain among the $b$ such that 
$\Qp{\phi(g_b,\vec{f})}\geq b$ for some $g_b$. Now apply the Mixing Lemma
to patch together all the $g_a$ for $a\in A$ in a $g$.
Check that
\[
\Qp{\exists x\phi(x,\vec{f})}=\Qp{\phi(g,\vec{f})}.
\]
\end{proof}
%\begin{theorem}[Cohen's forcing Theorem]\label{thm:cohfor}
%Let $R_1,\dots,R_n$ be forcing relations on $C^+(St(\B))$.
%Then for all $\vec{f}\in C^+(St(\B))^n$ and all formulae $\phi(\vec{x})$
%in the language $\{R_1,\dots,R_n\}$:
%\begin{enumerate}
%\item
%$C^+(St(\B))/G\models \phi([\vec{f}]_G)\,\,\,\,\text{ if and only if }\,\,\,\,\,\Qp{\phi(\vec{f})}\in G$,
%\smallskip
%
%\item
%for all $a\in \B$ the following are equivalent:
%\begin{enumerate}
%\item
%$\Qp{\phi(f_1,\dots,f_n)}\geq a$,
%\item
%$C^+(St(\B))/G\models \phi([\vec{f}]_G)$ for \emph{all} $G\in N_a$,
%\item
%$C^+(St(\B))/G\models \phi([\vec{f}]_G)$ for \emph{densely many} $G\in N_a$.
%\end{enumerate}
%\end{enumerate}
%\end{theorem}
%
%\begin{proof}
%Sketch: Proceed by induction on the complexity of $\phi$ using the Mixing Lemma
%and the Fullness Lemma to handle the quantificator's cases.
%\end{proof}

\section{$\B$-names for elements of a Polish space} \label{section:names}
We refer the reader to \cite{Jech} for a comprehensive treatment of the forcing method, and to 
\cite[Chapter 3]{tesi} for a sketchy presentation covering in more detail the results of this section.
All over this section we assume the reader has some familiarity with the standard presentations of forcing 
and we follow notation standard in the set theoretic community 
(for example $\mathbb{N}$ is often denoted as the ordinal $\omega$).
Throughout this section we will assume $V$ (the universe of sets) to be a transitive model of $\ZFC$,
and $\B\in V$ a boolean algebra which $V$ models to be complete.
$V^{\B}$ will denote the boolean valued model of set theory
as defined in \cite[Chapter $14$]{Jech} and $\check{a}\in V^{\B}$ will denote the canonical $\B$-names for
sets $a\in V$.
If $G$ is a $V$-generic ultrafilter in $\B$, $V[G]$ will denote the generic
extension of $V$ and $\sigma_G$ the interpretations of $\B$-names in $V^\B$ by $G$.
In this situation there is a natural isomorphism between $(V^\B/G,\in^\B/G)$ and
$(V[G],\in)$ defined by $[\sigma]_G\mapsto\sigma_G$.
Cohen's forcing theorem in this setting states the following for any formula
$\phi(x_1,\dots,x_n)$ in the language of set theory:
\begin{itemize}
\item $V[G]\models\phi((\sigma_1)_G,\dots,(\sigma_n)_G)$ if and only if
$\Qp{\phi(\sigma_1,\dots,\sigma_n)}\in G$,
\item
$\Qp{\phi(\sigma_1,\dots,\sigma_n)}\geq b$ if and only if
$V[G]\models\phi((\sigma_1)_G,\dots,(\sigma_n)_G)$ for all $V$-generic filters $G$ to which $b$ belongs.
\end{itemize}
It is well known that $V$-generic filters cannot exist for atomless complete boolean algebra, nonetheless
there is a wide spectrum of solutions to overcome this issue and work as if for
any such algebra $V$-generic filters can be found, and we will do so.
We will also use in several points the following form of absoluteness for $\Delta_1$-properties.
\begin{quote}
Let $c$ denote a new constant symbol. 
Then for all provably $\Delta_1$-definable properties $\phi(x,c)$ over the
theory $\ZFC+(c\subseteq\omega)$ and all\footnote{Recall that
$H_{\omega_1}$ is the family of hereditarily countable sets. For what concerns us, the relevant observation is that
any Polish space is a definable class (with parameters) in $H_{\omega_1}$.} $a\in H_{\omega_1}$, the following holds:
\begin{itemize} 
\item
$\phi(x/a,c/r)$ holds in a transitive $N$ which is a model of (a large enough fragment of) $\ZFC$ with $a,r\in N$
if and only if 
$\Qp{\phi(x/\check{a},c/\check{r})}=1_\B$ holds in $N$ for all boolean algebras $\B\in N$ 
which $N$ models to be complete.
\item
$\phi(x/a,c/r)$ holds in $V$ if and only if it holds in any (some) transitive set $N$ 
which is a model of (a large enough fragment of) $\ZFC$ with $a,r\in N$.
\end{itemize}
\end{quote}

Let $Y$ be a Polish space. Then $Y$ can be identified with a
$G_\delta$-subset of the Hilbert cube $\mathcal{H}=[0,1]^{\mathbb{N}}$ \cite[Theorem $4.14$]{Kechris}.

Consider 
\[
\hat{\mathcal{B}}=\set{B_r(q):r\in\mathbb{Q}, q\in D}
\]
where $B_r(q)$ is the open ball of radius $r$ and center $q$, and $D$ is the set of points in
$\mathcal{H}$ with rational coordinates which are non-zero just on a finite set.
Then $\hat{\mathcal{B}}$ is a countable basis for the topology on $\mathcal{H}=[0,1]^{\mathbb{N}}$,
and it is
described by a provably $\Delta_1$-definable property defined by a lightface Borel predicate.

\begin{definition} Let $Y$ be a Polish space in $V$, w.l.o.g. 
\[
Y=\bigcap_{n\in\mathbb{N}}\bigcup\set{B_{r_{mn}}(q_{mn}):m\in\mathbb{N}}
\]
is a $G_\delta$-subset of $\mathcal{H}$
given by a suitably chosen family of elements $B_{r_{mn}}(q_{mn})$ of $\hat{\mathcal{B}}$.
$\sigma \in V^{\B}$
is a \emph{$\B$-name for an element of $Y$} if
\[
\Qp{\sigma\in \bigcap_{n\in\mathbb{N}}\bigcup\set{\dot{B}_{r_{mn}}(q_{mn}):m\in\mathbb{N}}}=1_{\B},
\]
where $(\dot{B}_{r}(q))_G$ is in $V[G]$ the ball of radius $r$ and center $q$ of the space 
$\mathcal{H}$ as defined in $V[G]$
for all $V$-generic filters $G$.

We denote by $Y^{\B}$ the set of all $\B$-names (of minimal rank) for elements of $Y$ modulo
the
equivalence relation:
\[
\sigma \equiv \tau \Leftrightarrow \Qp{\sigma = \tau}=1_{\B}
\]
We will call $\B$-name for a complex number any element of the family $\C^\B$.
\end{definition}

We can similarly lift Borel relations on $Y^n$ to boolean relations on $(Y^\B)^n$:
\begin{remark} \label{code}
Let $Y$ be a Polish space. 
As already noted, $Y$ is a $G_\delta$-subset
of $\mathcal{H}=[0,1]^{\mathbb{N}}$.
$\hat{\mathcal{B}}$ induces a countable open basis
on $Y$:
\[
\hat{\mathcal{B}}_Y=\set{B_r(q)\cap Y:r\in\mathbb{Q}, q\in D}.
\]
Every Borel subset of $Y$ is obtained,
in fewer than $\aleph_1$ steps, from
the elements of $\hat{\mathcal{B}}_Y$ by taking
countable unions and complements.
It is possible to code these operations with $r$ a subset of $\omega$ (see \cite[Chapter $25$]{Jech}).
For our purposes it is enough
to say that if $R$ is a Borel subset of $Y^n$,
there is some $r \subseteq\omega$ and a ($\ZFC$ provably) $\Delta_1$-property $P_R(\vec{x},y)$ such that
\[
\vec{x} \in R \Leftrightarrow P_R(\vec{x},r).
\]
Suppose $r \in V$. We denote by $R^V$ the set $\set{\vec{x}\in V: P_R(\vec{x},r)}$.
\end{remark}
Guided by these considerations,
we define in $V$ the following.
\begin{definition} \label{complexinter}
%Let $R\subseteq Y^n$ be a Borel re
% $\B$-name:
%\[
%R^{\B}=\set{(\vec{\tau},\Qp{P(\vec{\tau},\check{r})}):\vec{\tau} \text{ is a $\B$-name for an element of }Y^n}
%\]
%$R^{\B}\in V^{\B}$ is a canonical name to interpret the Borel relation $R$ in any
%generic extension of $V$ by a generic filter $G$.
Given $R$, a
Borel $n$-ary relation on a Polish space $Y$, we let 
$P_R(\vec{x},r)$ be the provably $\Delta_1$-definable property such that
\[
\vec{x} \in R \Leftrightarrow P_R(\vec{x},r).
\]
For any
$\sigma_1,\dots,\sigma_n \in Y^{\B}$, let
$\vec{\sigma}\in V^\B$ denote the canonical name for the
tuple $(\sigma_1,\dots,\sigma_n)$.

Define
\[
R^{\B}(\sigma_1,\dots,\sigma_n)=
\Qp{P_R(\vec{\sigma},\check{r})}^{V^\B}.
\]
\end{definition}
Similarly define the lifting to $Y^\B$ of Borel functions $F:Y^n\to Y$.

With these definitions
\[
\langle Y^\B, R_1^{\B},\dots,
R_k^{\B},
F_1^{\B},\dots,
F_l^{\B}\rangle
\] 
is a $\B$-valued extension of $Y$,
where each $R_i$ ($F_j$) is an arbitrary Borel relation (function)
on $Y^{n_i}$ (from $Y^{m_j}$ to $Y$).

\begin{remark}\label{rmk:C+xvsCX} 
So far we have defined a structure of $\B$-valued model for Borel relations and functions on both
$Y^{\B}$ and $C(St(\B),Y)$ for a Polish space $Y$. However, whenever $Y$ is not compact, 
we cannot exhibit a natural isomorphism between these two
models, unless we enlarge $C(St(\B),Y)$.
The problem (that can be appreciated by the reader familiar with forcing) is the following:
assume we split a complete atomless boolean algebra $\B$ in a countable maximal antichain 
$A=\set{a_n:n\in\omega}$. Then $\bigvee_{n\in\omega} a_n=1_\B$ but
$\bigcup_{n\in\omega} \mathcal{O}_{a_n}$ is just an open dense subset of $St(\B)$, as the family
$\{\lnot a_n:n\in\omega\}$ has the finite intersection property and can be extended to an ultrafilter $H$
missing the antichain $A$. Now consider for $Y=\C$ the function $f:G\mapsto n$ iff $a_n\in G$.
This should naturally correspond to the $\B$-name for a natural number 
\[
\sigma_f=\set{\langle \check{m}, a_n\rangle: m<n\in\omega}.
\]
Notice also that the function is continuous on its domain 
since the target is a discrete subspace of $\C$ and the preimage of
each point is clopen. Moreover this function naturally extends to a continuous function in 
$C^+(St(\B))\setminus C(St(\B))$ mapping the $G$ out of its domain to $\infty$. This 
shows that $C(St(\B))$ is a space of functions too small to capture
all possible $\B$-names for complex numbers.
%Essentially $C(St(\B))$ is not describing a sheaf but just the presheaf of bounded continuous complex valued functions
% and to complete it to a sheaf (as it is the case for
%$\C^{\B}$) we have to 
%pass to the space $C^+(St(\B))$.
The reader who has grasped the content of this remark will find the proofs of the following Lemmas almost self-evident,
 however we decided to include them in full details, since at some points there are 
 delicate issues regarding the way to formulate certain simple properties of Polish 
 spaces in an absolute (i.e $\Delta_1$-definable) manner,
which can be tricky for  those who are not fully familiar with forcing.
\end{remark}

\begin{definition} \label{Cplus}
Let $Y$ be a Polish space presented as a
$G_\delta$-subset of the Hilbert cube $\mathcal{H}=[0,1]^{\mathbb{N}}$.
Let $\B$ be a complete boolean algebra.

$C^+(St(\B),Y)$
is the family of continuous functions $f:St(\B)\to \mathcal{H}$ 
such that $f^{-1}[\mathcal{H}\setminus Y]$ is meager in $St(\B)$.
\end{definition}

We can define a structure of $\B$-valued extension of $Y$ over $C^+(St(\B),Y)$ repeating
verbatim what we have done in Section \ref{section:ext} for $C(St(\B),Y)$. Everything will work smoothly since
for all Borel $R\subseteq Y^n$ and $f_1,\dots,f_n\in C^+(St(\B),Y)$, the
set of $H\in St(\B)$ such that $R(f_1(H),\dots,f_n(H))$ is not defined is always a meager subset of $St(\B)$.
Moreover Lemmas \ref{mixing:lemma} and \ref{fullness:lemma} can be recasted verbatim also for $C^+(St(\B),Y)$,
which is therefore a full $\B$-valued model. 
We are ready to
prove the following theorem.

\begin{theorem} \label{thm:iso}
Let $Y$ be a Polish space and $\B$ a complete boolean algebra.

Then
$\langle C^+(St(\B),Y),=^{St(\B)}\rangle$ 
and 
$\langle Y^{\B},=^{\B}\rangle$ 
are isomorphic $\B$-valued models.
\end{theorem}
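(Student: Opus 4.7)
The plan is to produce a relation $\Phi \subseteq C^+(St(\B),Y) \times Y^{\B}$ and verify that $\langle \mathrm{id}_{\B}, \Phi\rangle$ is an isomorphism of $\B$-valued models. I will define a map in each direction, show they are mutually inverse modulo the respective equivalences, and check that boolean values of equality are preserved; by the remark following Def.~\ref{mor}, preserving equality on an isomorphism of boolean algebras together with surjectivity up to equivalence suffices.

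For the direction $Y^{\B} \to C^+(St(\B),Y)$: given $\sigma \in Y^{\B}$, for each basic open ball $B \in \hat{\mathcal{B}}$ associate the clopen $U_B = \mathcal{O}_{\Qp{\sigma \in \dot{B}}} \subseteq St(\B)$. Since $\hat{\mathcal{B}}$ is countable and $\Qp{\sigma \in \mathcal{H}}=1_{\B}$, for a comeager set of $G \in St(\B)$ the family $\{B \in \hat{\mathcal{B}} : G \in U_B\}$ is a filter of balls with diameters tending to $0$. Define $f_{\sigma}(G)$ to be the unique point of $\mathcal{H}$ in $\bigcap\{B : G \in U_B\}$, extending by continuity on the remaining meager set. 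The hypothesis $\Qp{\sigma \in Y}=1_{\B}$ unfolds via the $G_{\delta}$-presentation of $Y$ into $\bigwedge_n\bigvee_m \Qp{\sigma \in \dot{B}_{r_{mn}}(q_{mn})} = 1_{\B}$; combined with the rule $\bigvee A = \intclosure{\bigcup A}$ in $\RO(St(\B))$, this shows that $f_{\sigma}^{-1}[Y]$ is comeager, so $f_{\sigma} \in C^+(St(\B),Y)$.

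For the direction $C^+(St(\B),Y) \to Y^{\B}$: given $f \in C^+(St(\B),Y)$, assign to each $B \in \hat{\mathcal{B}}$ the regular open $b_{f,B} = \intclosure{f^{-1}[B]} \in \B$. Using separability of $\mathcal{H}$, construct a $\B$-name $\sigma_f \in V^{\B}$ coding a Cauchy sequence of basic balls: namely, a name for the unique element of $\mathcal{H}$ meeting each $\dot{B}$ with $b_{f,B} \in G$. The $\Delta_1$-absoluteness of the $G_{\delta}$ characterization of $Y$, together with the meagerness of $f^{-1}[\mathcal{H} \setminus Y]$ (which yields $\bigwedge_n \bigvee_m b_{f,B_{r_{mn}}(q_{mn})} = 1_{\B}$), shows $\Qp{\sigma_f \in Y} = 1_{\B}$, hence $\sigma_f \in Y^{\B}$.

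Finally I check that $\sigma \mapsto f_{\sigma} \mapsto \sigma_{f_{\sigma}}$ and $f \mapsto \sigma_f \mapsto f_{\sigma_f}$ return the original objects up to equivalence, and that $\Qp{f = g}^{St(\B)} = \Qp{\sigma_f = \sigma_g}^{\B}$. Both verifications reduce, via the Borel coding of Rem.~\ref{code} and the Fullness/Mixing Lemmas, to comparing boolean values on the countable family $\hat{\mathcal{B}}$. The main technical obstacle is aligning the two worlds on the meager exceptional sets: one must use Cohen's forcing theorem to translate $\B$-values into regularized preimages in $St(\B)$, and exploit the characterization of $\intclosure{\cdot}$ as the unique regular open set differing from an arbitrary Baire-property set by a meager set, so that the meager set where $f_\sigma$ is patched and the meager set $f^{-1}[\mathcal{H}\setminus Y]$ are precisely the sets absorbed into equality of boolean values.
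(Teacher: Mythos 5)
Your proposal is correct and takes essentially the same route as the paper's own proof: the forward map via the regularized preimages $\intclosure{f^{-1}[U_n]}$ of basic balls together with fullness of $V^{\B}$ (the paper's Proposition~\ref{prop:iso}), the inverse map via the finite-intersection-property/compactness argument on closures of basic balls with a continuity extension over a meager set (Lemma~\ref{surj}), $\Delta_1$-absoluteness of the $G_\delta$-presentation of $Y$, and the identification of boolean values through the unique regular open set meager-equivalent to a Baire-property set. The paper additionally proves preservation of \emph{all} Borel relations (Lemma~\ref{lem:embed}) by induction on the Borel hierarchy, of which your equality check is precisely the base case, but for the theorem as stated that extra generality is not needed.
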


Since the case $Y=\C$ outlines already the main ingredients of the proof and may be slightly easier
to follow, due to the evident analogies of the spaces $C^+(X,\C)$
with commutative $C^*\!$-algebras, we will give the full proof
of the theorem above for this special case. However, with minimal modifications, the reader
will be able to generalize by himself the proof to any Polish space: for spaces admitting a one point compactification
it suffices to replace all occurrences of $\C$ with $Y$ in the proof to follow. For other Polish 
spaces $Z$ not admitting such a simple compactification, this is slightly more delicate
since the preimage of an $f\in C^+(St(\B),Z)$ of 
the points in the range of $f$ out of $Z$ is not anymore a closed nowhere dense
set, but a countable union of closed nowhere dense sets of $St(\B)$. However no essential new 
complications arise also for this case, so we feel free to sketch just the main ingredients of 
the proof for the more general case of such Polish spaces 
$Z$.

\begin{remark}
In the following, given a complete boolean algebra $\B$, we will
often confuse it with $\RO(St(\B))$. If $U$ is a regular open set
of $St(\B)$ and $G\in St(\B)$, we may write equivalently
\[
G \in U,
U\in G 
\]
depending on whether we are considering
$U$ as an element of $\RO(St(\B))$
or as the correspondent element in $\B$.
\end{remark}
\begin{remark} \label{spec}
The definitions given in Remark \ref{code} and Definition \ref{Cplus}
can be simplified when working in $\C$.
Instead of $\hat{\mathcal{B}}_{\C}$ from Remark \ref{code},
we will work directly with $\mathcal{B}=\set{U_n:n \in \omega}$,
the countable basis of $\C$ whose elements are the open balls with
rational radius and whose centre has rational coordinates.
Moreover, instead of Definition \ref{Cplus}, we work with
$C^+(St(\B))$ as defined in Def.~\ref{def:C+X}(ii).
\end{remark}

\subsection*{Proof of Theorem  \ref{thm:iso} for $\mathbb{C}$}
The proof splits in several Lemmas.

The first Lemma gives a characterization of the $\B$-name to associate to an $f\in C^+(St(\B))$,
which we will need in order to define the boolean isomorphism we are looking for.
\begin{lemma} \label{extfunction}
Assume $f \in V$ is an element of $C^+(St(\B))$.
For $H\in St(\B)$ we define
\[
\Sigma^H_f=\set{\cl{U_n}:
\intclosure{f^{-1}[U_n]} \in H}
\]
Then,
for $H\in St(\B)$, we have:
\[
f(H)=\sigma_f^H
\]
where $\sigma_f^H$
it is the unique element in $\bigcap\Sigma_f^H$ if $\Sigma_f^H$ is
non-empty, and $\sigma_f^H=\infty$ otherwise.
\end{lemma}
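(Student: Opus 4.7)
\medskip

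The plan is to split the verification into two cases according to whether $f(H)$ lies in $\C$ or equals $\infty$, and in each case use continuity of $f$ plus the fact that the basic open balls $U_n$ with rational center and rational radius form a neighborhood base of every $z\in \C$, with the additional property that every such $U_n$ is bounded, so its closure in $\mathbb{S}^2$ does not contain $\infty$.

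First assume $f(H)=z\in\C$. For every $U_n\ni z$, the set $f^{-1}[U_n]$ is open and contains $H$, hence $H\in f^{-1}[U_n]\subseteq\intclosure{f^{-1}[U_n]}$, so $\intclosure{f^{-1}[U_n]}\in H$ and therefore $\cl{U_n}\in\Sigma_f^H$. Since $\bigcap\{\cl{U_n}:z\in U_n\}=\{z\}$ (the balls with rational centers and radii form a neighborhood base, and $\C$ is Hausdorff), we get $z\in\bigcap\Sigma_f^H$. For the converse inclusion, I would check that if $\cl{U_n}\in\Sigma_f^H$ then $z\in\cl{U_n}$: contrapositively, if $z\notin\cl{U_n}$ then $\mathbb{S}^2\setminus\cl{U_n}$ is an open neighborhood $V$ of $z$, and $W:=f^{-1}[V]$ is an open neighborhood of $H$ disjoint from $f^{-1}[U_n]$; since $W$ is open, being disjoint from $f^{-1}[U_n]$ forces $W\cap\cl{f^{-1}[U_n]}=\empset$, so $W$ witnesses $H\notin\intclosure{f^{-1}[U_n]}$, i.e.\ $\cl{U_n}\notin\Sigma_f^H$. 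Hence $\bigcap\Sigma_f^H=\{z\}$ and $\sigma_f^H=z=f(H)$.

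Next assume $f(H)=\infty$. I claim $\Sigma_f^H=\empset$, so that $\sigma_f^H=\infty=f(H)$ by the definition. Fix any $U_n$: being a bounded ball in $\C$, its closure in $\mathbb{S}^2$ does not contain $\infty$, so $\mathbb{S}^2\setminus\cl{U_n}$ is an open neighborhood of $\infty$ in $\mathbb{S}^2$; therefore $W:=f^{-1}[\mathbb{S}^2\setminus\cl{U_n}]$ is an open neighborhood of $H$ disjoint from $f^{-1}[U_n]$. The same argument as in the previous paragraph (openness of $W$ plus disjointness from $f^{-1}[U_n]$ implies disjointness from $\cl{f^{-1}[U_n]}$) shows $H\notin\intclosure{f^{-1}[U_n]}$, so $\cl{U_n}\notin\Sigma_f^H$.

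The only slightly delicate point, and what I expect to be the main obstacle, is the standard but easy observation that an \emph{open} set disjoint from a set $A$ is automatically disjoint from $\cl{A}$ (hence from $\intclosure{A}$); this is what lets us pass from disjointness of preimages to non-membership in the regularization, and it is used in both cases. Once this is in place the argument is purely topological and does not require any boolean-valued machinery; the two cases together give $f(H)=\sigma_f^H$ for all $H\in St(\B)$.
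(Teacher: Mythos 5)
Your proof is correct, but it is organized differently from the paper's. The paper splits on whether $\Sigma_f^H$ is empty: the empty case is handled as in your argument, but in the non-empty case the paper first proves, as a free-standing claim, that $\bigcap\Sigma_f^H$ is a singleton --- existence via the finite intersection property of closed subsets of a compact $\cl{U_m}$, uniqueness via a finite-covering argument on $\cl{U_m}\setminus U_p$ combined with the ultrafilter property of $H$ --- and only afterwards identifies $\sigma_f^H$ with $f(H)$ by separating the two putative values with balls having disjoint closures, producing two disjoint regular open sets both in $H$. You instead split on the value $f(H)$ and anchor everything to it: the elementary observation that an open set disjoint from $A$ misses $\cl{A}$, hence $\intclosure{A}$, yields directly that $\cl{U_n}\in\Sigma_f^H$ exactly when $f(H)\in\cl{U_n}$, so $\Sigma_f^H=\empset$ iff $f(H)=\infty$, and otherwise $\bigcap\Sigma_f^H=\set{f(H)}$ since the rational balls containing $f(H)$ all contribute and their closures intersect in the singleton. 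This is shorter, avoids compactness entirely, and establishes well-definedness of $\sigma_f^H$ as a byproduct of the exhaustive case split. What the paper's heavier argument buys is reusability: its singleton claim (Claim \ref{claim}) is invoked repeatedly later --- in Proposition \ref{prop:iso}, Claim \ref{claim1}, and especially Lemma \ref{surj} --- with $\intclosure{f^{-1}[U_n]}$ replaced by abstract boolean values $\Qp{\tau\in\dot{U}_n}$, where no continuous function exists yet (indeed $f_\tau$ is being \emph{constructed} there). Your argument presupposes $f$ and its continuity as an anchor, so it cannot be recycled in those places; if one adopted your proof here, the compactness argument would still have to be carried out later for Lemma \ref{surj}.
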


\begin{remark}\label{rmk:keyrmk}
The Lemma shows that in $\ZFC$, given $f\in C^+(St(\B))$,
it holds that
\[
f(H)=x \Leftrightarrow x=\sigma_f^H.
\]
The latter is a ($\ZFC$ provably) $\Delta_1$-property with $\omega,\B$, and
$\{a_n=\intclosure{f^{-1}[U_n]}:n\in\mathbb{N}\}$ as parameters.
Thus, given
$V$ a transitive model of $\ZFC$, $\B$ a complete boolean algebra in $V$,
$G$ a $V$-generic filter in $\B$, any $f\in V$ element of
$C^+(St(\B))^V$ can be extended in an absolute manner to
$V[G]$ by the rule:
\begin{align*}
f^{V[G]}:St(\B)^{V[G]} &\to \C^{V[G]}\\
H &\mapsto \sigma_f^H
\end{align*}
where $\sigma_f^H$ is defined as in the previous lemma through the set
$
\Sigma_f^H=\set{\cl{U_n}: a_n \in H}
$.
\end{remark}
This observation is used in
the following proposition defining the boolean isomorphism
between $\C^{\B}$ and $C^+(St(\B))$.
\begin{proposition} \label{prop:iso}
Fix $V$ a transitive model of $\ZFC$ and $\B\in V$ a boolean algebra which 
$V$ models to be complete.
Let $f \in C^+(St(\B))$ and consider
\[
\mathcal{B}=\set{U_n: n \in \omega}
\]
the countable basis of $\C$ defined in Remark \ref{spec}. For
each $n\in\omega$ let
\[
a_n=\intclosure{f^{-1}[U_n]}.
\]
There exists a unique $\tau_f \in \C^{\B}$ such that\footnote{$\dot{U_n}$ denotes the $\B$-name for the complex
numbers in the open ball of the generic extension
determined by the rational coordinates and rational radius of the ball $U_n$.}
\[
\Qp{\tau_f \in \dot{U}_n}^{V^\B}=a_n
\]
for all $n\in\omega$.
Moreover any $\tau\in V^\B$ such that 
$\Qp{\tau\in \dot{U}_n}^{V^\B}=a_n$
for all $n\in\omega$ is also such that $\Qp{\tau=\tau_f}^{V^\B}=1_\B$.
\end{proposition}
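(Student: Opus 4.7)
My plan is to build $\tau_f$ via the fullness of $V^\B$ as a boolean valued model of $\ZFC$, and then verify $\Qp{\tau_f\in\dot U_n}^{V^\B}=a_n$ by proving the two inequalities separately, exploiting the $\Delta_1$-absolute characterization of $\sigma_f^H$ from Lemma \ref{extfunction} and Remark \ref{rmk:keyrmk}.

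For the construction, inside $V^\B$ I consider the formula $\phi(x)$: ``$x\in\C$ and for every $n\in\omega$, if $\check a_n\in\dot G$ then $x\in\cl{\check U_n}$'', where $\dot G$ is the canonical name for the generic filter. Since $f\in C^+(St(\B))$, the set $\bigcup_n f^{-1}[U_n]=f^{-1}[\C]$ is dense open in $St(\B)$, so $\bigvee_n a_n=1_\B$; hence in every $V[G]$ the filter $G$ meets some $a_n$ and, by Lemma \ref{extfunction} applied in $V[G]$ via Remark \ref{rmk:keyrmk}, the unique witness for $\phi$ in $\C^{V[G]}$ is $\sigma_f^G=f^{V[G]}(G)$. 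By Cohen's forcing theorem $\Qp{\exists!\,x\,\phi(x)}^{V^\B}=1_\B$, so fullness of $V^\B$ yields $\tau_f\in\C^\B$ with $\Qp{\phi(\tau_f)}^{V^\B}=1_\B$.

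For $a_n\le\Qp{\tau_f\in\dot U_n}$, notice that for each $m$ with $\cl{U_m}\subseteq U_n$, the defining property of $\tau_f$ gives $a_m\le\Qp{\tau_f\in\cl{\check U_m}}\le\Qp{\tau_f\in\dot U_n}$, and $\{a_m\cdot a_n:\cl{U_m}\subseteq U_n\}$ is predense below $a_n$: any nonzero $b\le a_n$ meets the open dense subset $f^{-1}[U_n]\subseteq \mathcal{O}_{a_n}$ at some $H$, and since $f(H)\in U_n$ admits basic-open neighborhoods $U_m$ with $\cl{U_m}\subseteq U_n$, one gets $H\in f^{-1}[U_m]\subseteq \mathcal{O}_{a_m}$ for such $m$. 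For the reverse $\Qp{\tau_f\in\dot U_n}\le a_n$, I show $\neg a_n\Vdash\tau_f\notin\dot U_n$: any $V$-generic $G$ with $\neg a_n\in G$ has $f(G)\notin U_n$ in $V$ (because $\neg a_n$ is disjoint from $f^{-1}[U_n]$), and by Remark \ref{rmk:keyrmk} $\sigma_f^G\in\C^{V[G]}$ equals the $V$-value $f(G)$, so $(\tau_f)_G\notin U_n^{V[G]}$. Uniqueness of $\tau_f$ is then immediate: any $\tau'\in\C^\B$ realizing the same boolean values has $(\tau_f)_G$ and $(\tau')_G$ in the same basic-open balls of $\C^{V[G]}$ for every $V$-generic $G$, forcing equality by the separation property of the basis $\{U_n\}$. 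The main obstacle is verifying that the $V$-value and the $V[G]$-value of $\sigma_f^G$ agree for generic $G$, which relies on the inclusion $\{U_m:f(G)\in U_m\}\subseteq\{U_m:a_m\in G\}$ (implicit in Lemma \ref{extfunction}) collapsing the $V[G]$-intersection $\bigcap\{\cl{U_m}^{V[G]}:a_m\in G\}$ to the singleton $\{f(G)\}$.
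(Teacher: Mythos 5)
Your construction of $\tau_f$ follows the paper's route exactly: your formula $\phi(x)$ with the canonical name $\dot G$ is just a reformulation of the paper's name $\Sigma_f=\set{(\dot U_n,a_n):n\in\omega}$ and of its equation $\Qp{\exists!\, x\,(x\in\bigcap\Sigma_f)}=1_\B$, and the appeal to fullness of $V^\B$ is the same. Your proof of the inequality $a_n\le\Qp{\tau_f\in\dot U_n}$ is a genuine and correct variant worth noting: the paper passes to $V[G]$, obtains ${f^{V[G]}}^{-1}[U_n^{V[G]}]\subseteq\cp{\Qp{\tau_f\in\dot U_n}}^{V[G]}$, and pulls this back to $V$ via the absoluteness Fact \ref{borel} before regularizing; your predensity argument with shrinking closed balls $\cl{U_m}\subseteq U_n$ stays entirely inside $V$ and avoids Fact \ref{borel} for this direction.

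However, your reverse inequality $\Qp{\tau_f\in\dot U_n}\le a_n$ has a genuine gap. For atomless $\B$ a $V$-generic $G$ is not an element of $V$, hence not a point of $St(\B)^V$, so ``the $V$-value $f(G)$'' is meaningless, and the disjointness of $\mathcal{O}_{\neg a_n}$ from $f^{-1}[U_n]$ is a statement about $V$-points only: it says nothing, by itself, about the generic point $G\in St(\B)^{V[G]}$. What you need is precisely the implication $\sigma_f^G\in U_n^{V[G]}\Rightarrow a_n\in G$; you correctly flag this as ``the main obstacle,'' but your resolution is circular, since the inclusion $\set{U_m: f(G)\in U_m}\subseteq\set{U_m:a_m\in G}$ is \emph{not} implicit in Lemma \ref{extfunction} --- that lemma is proved in $V$ for $H\in St(\B)^V$, and its extension to generic ultrafilters is exactly the claim at stake. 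The repair (and what the paper tacitly does by citing the uniqueness part of Claim \ref{claim}) is to rerun the compactness/covering argument inside $V[G]$ with the $a_m$'s as parameters: if $a_n\notin G$, pick $m$ with $a_m\in G$, and in $V[G]$ choose finitely many rational balls $U_{j_1},\dots,U_{j_k}$ covering $\cl{U_m}\setminus U_n$ with $\sigma_f^G\notin\cl{U_{j_i}}^{V[G]}$ for each $i$; since the indices form a finite set of naturals and the covering relation among rational balls is absolute, the chain $\mathcal{O}_{\neg a_n}\cap\mathcal{O}_{a_m}\subseteq f^{-1}[\cl{U_m}\setminus U_n]\subseteq\bigcup_i\mathcal{O}_{a_{j_i}}$ holds in $V$, so the boolean inequality $\neg a_n\wedge a_m\le\bigvee_i a_{j_i}$ transfers to $G$, forcing some $a_{j_i}\in G$ and hence $\sigma_f^G\in\cl{U_{j_i}}^{V[G]}$, a contradiction. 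Without this (or the paper's Fact \ref{borel}) your direction $\Qp{\tau_f\in\dot U_n}\le a_n$, and with it the uniqueness clause, remains unproven; the rest of your argument, including the final separation argument for uniqueness, is sound once this step is supplied.
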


By Proposition \ref{prop:iso} we conclude that the map $f\mapsto\tau_f$ defines a function between 
$C^+(St(\B))$ and $\C^\B$. 
We still need to show that the function is a surjective boolean map i.e. it maps boolean equality on
$C^+(St(\B))$ to boolean equality on $\C^\B$ and is surjective (in the sense of boolean embeddings).
The latter is achieved by the following Lemma:
\begin{lemma}\label{surj}
Assume $\tau \in \C^{\B}$. Consider
\begin{align*}
f_{\tau}:St(\B) &\to \C\cup\set{\infty} \\
H &\mapsto \sigma_\tau^H
\end{align*}
where, given
\[
\Sigma_\tau^H=\set{\cl{U_n}:\Qp{\tau\in \dot{U}_n}^{V^\B}\in H},
\]
$\sigma_\tau^H $ is the unique element
in $\bigcap \Sigma_\tau^H$ if $\Sigma_f^H$ is non-empty,
$\sigma_\tau^H=\infty$ otherwise.
The function $f_\tau$ belongs to $C^+(St(\B))$
and $\tau_{f_{\tau}}=\tau$.
\end{lemma}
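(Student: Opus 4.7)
The plan is to verify that $f_\tau$ is well-defined and continuous into $\mathbb{S}^2$, that $f_\tau^{-1}[\set{\infty}]$ is closed nowhere dense (so $f_\tau\in C^+(St(\B))$), and finally that $\intclosure{f_\tau^{-1}[U_n]}=\Qp{\tau\in\dot U_n}^{V^\B}$ for every basic open $U_n$. By the uniqueness clause of Proposition \ref{prop:iso} this last identity forces $\tau_{f_\tau}\equiv\tau$. Throughout I write $a_n:=\Qp{\tau\in\dot U_n}^{V^\B}$, so that $\Sigma_\tau^H=\set{\cl{U_n}:a_n\in H}$.

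For well-definedness on a co-meager set I proceed as follows. Since $\tau\in\C^\B$ and the basic balls of diameter less than $1/k$ cover $\C$, one has $\bigvee\set{a_n:\mathrm{diam}(U_n)<1/k}=1_\B$, so $W_k:=\bigcup\set{\mathcal{O}_{a_n}:\mathrm{diam}(U_n)<1/k}$ is dense open in $St(\B)$ and $G^*:=\bigcap_k W_k$ is a dense $G_\delta$. For $H\in G^*$, $\Sigma_\tau^H$ contains closed balls of arbitrarily small diameter. Any finite subfamily $\cl{U_{n_1}},\dots,\cl{U_{n_r}}\in\Sigma_\tau^H$ yields $\bigwedge_i a_{n_i}\in H$, hence a non-zero boolean value, which by $\Delta_1$-absoluteness of non-emptiness of definable open sets forces $\bigcap_i U_{n_i}\ne\emptyset$ in $V$, and hence $\bigcap_i\cl{U_{n_i}}\ne\emptyset$. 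Thus $\Sigma_\tau^H$ has the finite intersection property, and combining this with compactness of closed disks in $\C$ and the presence of arbitrarily small-diameter members yields that $\bigcap\Sigma_\tau^H$ is a single point $\sigma_\tau^H\in\C$. Continuity of $f_\tau\restriction G^*$ at $H_0$ with $f_\tau(H_0)=x\in U_n$ follows from choosing $m$ with $B_{2/m}(x)\subseteq U_n$: some $a_k\in H_0$ has $\mathrm{diam}(U_k)<1/m$ and $x\in\cl{U_k}$, whence $\cl{U_k}\subseteq U_n$ and $\mathcal{O}_{a_k}\cap G^*$ is a neighborhood of $H_0$ in $G^*$ mapped into $\cl{U_k}\subseteq U_n$. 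Since $St(\B)$ is extremally disconnected and $\mathbb{S}^2$ is compact Hausdorff, $f_\tau\restriction G^*$ extends uniquely to a continuous map $f_\tau:St(\B)\to\mathbb{S}^2$; its $\infty$-fibre is closed and contained in the meager set $St(\B)\setminus G^*$, hence nowhere dense in $St(\B)$, and so $f_\tau\in C^+(St(\B))$.

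Finally, for $\intclosure{f_\tau^{-1}[U_n]}=a_n$: whenever $\cl{U_k}\subseteq U_n$ the construction gives $\mathcal{O}_{a_k}\subseteq f_\tau^{-1}[U_n]\cup f_\tau^{-1}[\set{\infty}]$. Using $\bigvee\set{a_k:\cl{U_k}\subseteq U_n}=\Qp{\tau\in\dot U_n}^{V^\B}=a_n$ (every point of $U_n$ lies in some basic ball whose closure is contained in $U_n$) together with $\intclosure{f_\tau^{-1}[\set{\infty}]}=\emptyset$ from nowhere-denseness, one obtains $\intclosure{f_\tau^{-1}[U_n]}=a_n$. Proposition \ref{prop:iso} then forces $\Qp{\tau_{f_\tau}=\tau}^{V^\B}=1_\B$. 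The main obstacle is the bookkeeping in the extension step: showing that the candidate $f_\tau$ given by the lemma's pointwise recipe coincides with the continuous extension from $G^*$, and that the $\infty$-fibre is genuinely closed nowhere dense rather than merely meager. The extremal disconnectedness of $St(\B)$, which allows a continuous map on a dense open subset into a compact Hausdorff space to be extended uniquely to the whole space, is the essential tool here.
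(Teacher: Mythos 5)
Your overall architecture---define $f_\tau$ on a comeager set using balls of diameter $<1/k$, then extend continuously---is legitimate, and in fact mirrors the route the paper itself takes for an arbitrary Polish space in Lemma~\ref{surj1}; but as written it has a genuine gap at precisely the point you yourself flag as ``the main obstacle'' and then never close. The extension tool you invoke is stated for \emph{dense open} subspaces of an extremally disconnected compactum (open sets are $C^*$-embedded, so $X=\beta U$ for $U$ dense open, whence maps into compact Hausdorff spaces extend), yet you apply it to $G^*$, which is only a dense $G_\delta$; no standard theorem gives $C^*$-embedding of dense $G_\delta$ subsets of extremally disconnected spaces, so you must either prove extendibility for this particular map (e.g.\ by uniformly approximating $f_\tau\restriction G^*$ with clopen step maps built from maximal antichains refining $\set{a_n:\mathrm{diam}(U_n)<1/k}$), or do what the paper does for $\C$: verify directly that the pointwise recipe is continuous at \emph{every} $H\in St(\B)$, including those with $\Sigma_\tau^H=\empset$, via the two-ball argument (choose $U_k$ with $\cl{U_k}^c\subseteq A$ and $U_l\supseteq \cl{U_k}$, and show $\mathcal{O}_{a_l}^c\subseteq f_\tau^{-1}[A]$ using equation~(\ref{formula})), which makes any abstract extension theorem unnecessary. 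Moreover, the lemma is a statement about the specific pointwise-defined $f_\tau$, and your computation of $\intclosure{f_\tau^{-1}[U_n]}$ uses the recipe at points outside $G^*$; so identifying the abstract extension with the recipe is not optional bookkeeping (for instance, to see the extension takes the value $\infty$ exactly where $\Sigma_\tau^H=\empset$ one already needs the implication $f_\tau(H)\in U_n\Rightarrow a_n\in H$ off $G^*$). You name this issue and leave it unresolved.

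The second gap is in the key identity $\intclosure{f_\tau^{-1}[U_n]}=a_n$. Your stated ingredients yield only $a_n\le\intclosure{f_\tau^{-1}[U_n]}$, from $\mathcal{O}_{a_k}\subseteq f_\tau^{-1}[\cl{U_k}]$ for $\cl{U_k}\subseteq U_n$ together with $\bigvee\set{a_k:\cl{U_k}\subseteq U_n}=a_n$. The reverse inequality does \emph{not} follow from $\intclosure{f_\tau^{-1}[\set{\infty}]}=\empset$: a ball $U_m$ with $a_m\wedge\lnot a_n\neq 0_\B$ may straddle the boundary of $U_n$, so nothing you wrote rules out $f_\tau$ landing in $U_n$ on a non-meager subset of $\mathcal{O}_{\lnot a_n}$. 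What is needed is the analogue of the paper's equation~(\ref{formula}), namely $f_\tau(H)\in U_n\Rightarrow a_n\in H$, which the paper obtains by the covering/compactness argument of Claim~\ref{claim}. Your small-ball setup can deliver it on $G^*$---if $f_\tau(H)=x\in U_n$, pick $m$ with $B_{2/m}(x)\subseteq U_n$ and $a_k\in H$ with $\mathrm{diam}(U_k)<1/m$, so $\cl{U_k}\subseteq U_n$, hence $a_k\le a_n$ and $a_n\in H$---and then density of $G^*$ closes it up, since $f_\tau^{-1}[U_n]$ is open and $\mathcal{O}_{a_n}$ clopen: $f_\tau^{-1}[U_n]\subseteq\cl{f_\tau^{-1}[U_n]\cap G^*}\subseteq\mathcal{O}_{a_n}$. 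But this argument is absent from your write-up. Your endgame is otherwise fine, and mildly different from the paper's: you use the full equality plus the uniqueness clause of Proposition~\ref{prop:iso}, whereas the paper needs only the inclusion $\Qp{\tau_{f_\tau}\in\dot{U}_n}\le a_n$ combined with a generic-filter contradiction.
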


Finally we need to show that $f\mapsto\sigma_f$ respects boolean equality, i.e. that:
\begin{equation}\label{eqn:ZZZ}
\Qp{f=g}^{C^+(St(\B))}=\Qp{\tau_f=\tau_g}^{V^\B}.
\end{equation}

Since it makes no difference to prove the equality for this relation or for an arbitrary Borel 
relation (or functions), we will prove the following stronger result:
\begin{lemma}\label{lem:embed}
Assume $R\subseteq \C^n$ is a Borel relation. Then 
$R^{St(\B)}(f_1,\dots,f_n)=R^\B(\sigma_{f_1},\dots,\sigma_{f_n})$,
where $R^\B$ is defined according to Def.~\ref{complexinter}.
\end{lemma}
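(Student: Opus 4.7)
The plan is to prove the equality by induction on the Borel code of $R$. Both $R^{St(\B)}(\vec f)$ and $R^\B(\vec{\sigma_f})$ admit a parallel recursive description from the same code: on the topological side, via the regularized preimage under $F=(f_1,\dots,f_n):St(\B)\to\C^n$; on the $\B$-name side, via the forcing evaluation of the name $\dot R$ built from the code. So it suffices to verify the base case and compatibility with the boolean operations of $\RO(St(\B))=\B$ (countable unions and complements).

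For the base case, let $V=U_{i_1}\times\cdots\times U_{i_n}$ be a basic open box of $\C^n$ formed from the countable basis $\mathcal{B}$ of Remark \ref{spec}. Proposition \ref{prop:iso} gives $\Qp{\sigma_{f_j}\in\dot U_{i_j}}^{V^\B}=\intclosure{f_j^{-1}[U_{i_j}]}$, so $V^\B(\vec{\sigma_f})=\bigwedge_j\intclosure{f_j^{-1}[U_{i_j}]}$. On the other side $V^{St(\B)}(\vec f)=\intclosure{\bigcap_j f_j^{-1}[U_{i_j}]}$; since each $f_j^{-1}[U_{i_j}]$ is open and differs from $\intclosure{f_j^{-1}[U_{i_j}]}$ by a meager set, the finite intersection of these regular opens is already regular open and agrees with $\intclosure{\bigcap_j f_j^{-1}[U_{i_j}]}$. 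The two atomic values match.

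For the inductive step, consider a countable union $R=\bigcup_{m\in\omega}R_m$. The recursive construction of $\dot R$ together with the boolean semantics gives $R^\B(\vec{\sigma_f})=\bigvee_m R_m^\B(\vec{\sigma_f})$. Topologically, $R^{St(\B)}(\vec f)=\intclosure{\bigcup_m F^{-1}[R_m]}$, and since each $F^{-1}[R_m]$ is Borel (hence has the Baire property) and differs from $\intclosure{F^{-1}[R_m]}$ by a meager set, and meager sets form a $\sigma$-ideal, we get $\intclosure{\bigcup_m F^{-1}[R_m]}=\intclosure{\bigcup_m\intclosure{F^{-1}[R_m]}}=\bigvee_m\intclosure{F^{-1}[R_m]}$ by the definition of join in $\RO(St(\B))$. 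The inductive hypothesis then yields the desired equality. The complement case $R=\C^n\setminus R'$ is analogous: $F^{-1}[\C^n\setminus R']=St(\B)\setminus F^{-1}[R']$ differs from $St(\B)\setminus\intclosure{F^{-1}[R']}$ by a meager set, so passing to $\intclosure{\cdot}$ gives $\intclosure{St(\B)\setminus\intclosure{F^{-1}[R']}}=\neg\intclosure{F^{-1}[R']}$, which matches $\neg R'^\B(\vec{\sigma_f})$ by the inductive hypothesis and the boolean semantics of negation.

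The main obstacle will be setting up the recursion bookkeeping so that the forcing unfolding of $\dot R$ and the preimage construction really do proceed along the same tree, and checking that every symmetric difference argument above delivers equality rather than mere containment in $\B=\RO(St(\B))$; this rests entirely on the $\sigma$-ideal structure of the meager sets together with the fact that every Borel subset of $St(\B)$ has the Baire property. Once these are in place the induction is routine, and the extension to Borel functions $F:\C^m\to\C$ follows by applying the same argument to their Borel graph, thereby subsuming Equation \eqref{eqn:ZZZ} as the special case where $R$ is the diagonal of $\C^2$.
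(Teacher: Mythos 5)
Your proposal is correct and follows essentially the same route as the paper's proof: induction on Borel complexity, with the base case supplied by Proposition~\ref{prop:iso}, countable unions handled via the $\sigma$-ideal of meager sets together with the fact that two regular open sets with meager symmetric difference coincide, and complements via the invariance of symmetric differences. The only cosmetic differences are that you induct on Borel codes and treat $n$-ary relations directly through basic open boxes, while the paper inducts on the $\Sigma^0_\alpha/\Pi^0_\alpha$ hierarchy for unary $R$ and notes the $n$-ary case is immediate; the ``recursion bookkeeping'' you flag (that $\Qp{\tau\in\dot R}$ unfolds along the code, e.g.\ $\Qp{\tau\in\dot R}=\bigvee_m\Qp{\tau\in\dot R_m}$ for $R=\bigcup_m R_m$) is used implicitly in the paper as well and is standard absoluteness of Borel-code evaluation.
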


It is clear that these Lemmas entail the conclusion of the theorem. We prove all of them in the next subsection.

\begin{corollary} \label{extension}
Under the hypotheses of Proposition~\ref{prop:iso}, if $G$ is a $V$-generic filter
in $\B$ then:
\[
f^{V[G]}(G)=(\tau_f)_G.
\]
\end{corollary}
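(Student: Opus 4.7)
The plan is to unfold both sides of the desired equation using the characterizations already at our disposal and match them via Cohen's forcing theorem applied to the defining property of $\tau_f$.

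First, I would apply Remark~\ref{rmk:keyrmk} to compute the left-hand side: by definition of the canonical extension $f^{V[G]}$, we have $f^{V[G]}(G) = \sigma_f^G$, where $\sigma_f^G$ is the unique element of $\bigcap \Sigma_f^G$ whenever this intersection is nonempty (and $\infty$ otherwise), with
\[
\Sigma_f^G = \set{\cl{U_n} : a_n \in G}
\]
and $a_n = \intclosure{f^{-1}[U_n]}$ as in Proposition~\ref{prop:iso}.

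Next, I would compute the right-hand side. By Proposition~\ref{prop:iso}, $\Qp{\tau_f \in \dot{U}_n}^{V^\B} = a_n$ for each $n$. Cohen's forcing theorem then gives $(\tau_f)_G \in U_n \iff a_n \in G$. In particular, $(\tau_f)_G$ belongs to $U_n \subseteq \cl{U_n}$ for every $n$ with $a_n \in G$, so
\[
(\tau_f)_G \in \bigcap \Sigma_f^G.
\]
Since $\tau_f \in \C^\B$, the interpretation $(\tau_f)_G$ is a genuine element of $\C^{V[G]}$, and because $\mathcal{B} = \set{U_n : n \in \omega}$ is a basis for the topology on $\C$, some $U_n$ must contain $(\tau_f)_G$; hence $a_n \in G$ for such $n$ and $\Sigma_f^G$ is nonempty.

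Finally I would invoke the uniqueness clause of Lemma~\ref{extfunction}, which says that whenever $\Sigma_f^H$ is nonempty its intersection is a singleton. The one subtle point, and the only real obstacle, is that Lemma~\ref{extfunction} was stated for $H \in St(\B)^V$, whereas we are applying it to the generic filter $G \notin V$. This is handled precisely by the $\Delta_1$-absoluteness observation of Remark~\ref{rmk:keyrmk}: the statement ``$\bigcap \Sigma_f^H$ has at most one element'' is $\Delta_1$ with parameters $\omega$, $\B$, and $\set{a_n : n \in \omega}$, all of which lie in $V$, and therefore transfers to $V[G]$. Combining the three steps, $\sigma_f^G$ and $(\tau_f)_G$ both lie in $\bigcap \Sigma_f^G$, which is a singleton, so they coincide, yielding $f^{V[G]}(G) = (\tau_f)_G$ as claimed.
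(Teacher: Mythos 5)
Your proof is correct and takes essentially the same route as the paper's: the paper's terse argument cites Remark~\ref{rmk:keyrmk} for $f^{V[G]}(G)=\sigma_f^G$ and then asserts $\sigma_f^H=(\tau_f)_H$ for $V$-generic $H$, which is precisely the step you unfold via Cohen's forcing theorem applied to $\Qp{\tau_f\in\dot{U}_n}^{V^\B}=a_n$ together with the nonemptiness and singleton properties of $\bigcap\Sigma_f^G$. Your explicit appeal to $\Delta_1$-absoluteness to transfer the uniqueness part of Claim~\ref{claim} from $V$ to $V[G]$ (where $G\notin V$) is exactly the role Remark~\ref{rmk:keyrmk} plays in the paper's compressed proof, so you have supplied the details the paper leaves implicit rather than a genuinely different argument.
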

\begin{proof}
By Lemma~\ref{lem:embed} and Remark~\ref{rmk:keyrmk}, we get that $\tau^H_f=f^{V[G]}(H)$ for all 
$H\in St(\B)^{V[G]}$. Moreover whenever $H$ is $V$-generic for $\B$ we also have that $\tau^H_f=(\tau_f)_H$.
Since $G\in V[G]$ is $V$-generic for $\B$, the conclusion follows.
\end{proof}
%%%%%%%%%%%%%%%%%%%%%%%%%%%%%%%%%%%%%%%%%%

\subsection*{Proof of the key Lemmas}
\begin{proof}[Proof of Lemma~\ref{extfunction}]
Assume $\Sigma_f^H$ is empty. If $f(H)\in U_n$ for some $n\in \omega$
it follows that:
\[
H\in f^{-1}[U_n] \subseteq \intclosure{f^{-1}[U_n]}
\]
hence $\cl{U_n} \in \Sigma_f^H$, which is absurd.
Suppose now that $\Sigma_f^H$ is non-empty.
\begin{claim} \label{claim}
Assume $\Sigma_f^H$ is non-empty. Then $\bigcap \Sigma_f^H$ is a singleton.
\end{claim}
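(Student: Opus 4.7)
The plan is to tie $\Sigma_f^H$ to the actual value $f(H)\in\mathbb{S}^2$: I will show that the nonemptiness of $\Sigma_f^H$ forces $f(H)\in\C$, and then that $\bigcap\Sigma_f^H=\{f(H)\}$, which is a singleton. Continuity of $f:St(\B)\to\mathbb{S}^2$ is the main ingredient, together with the elementary observation that if two open sets $V,W$ are disjoint then $V\cap\cl{W}=\emptyset$ as well; this lets one translate any statement of the form $W\cap f^{-1}[U_n]=\emptyset$ into $W\subseteq\neg a_n=St(\B)\setminus\cl{f^{-1}[U_n]}$, identifying points of $\neg a_n$ and hence settling membership of $a_n$ in the ultrafilter $H$.

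First I would rule out $f(H)=\infty$. Since each $U_n\in\mathcal{B}$ has rational center and rational radius, $\cl{U_n}\subseteq\C$ is compact, so $\mathbb{S}^2\setminus\cl{U_n}$ is an open neighborhood of $\infty$ in $\mathbb{S}^2$. Continuity of $f$ yields an open neighborhood $W$ of $H$ with $f[W]\subseteq\mathbb{S}^2\setminus\cl{U_n}$, whence $W\cap f^{-1}[U_n]=\emptyset$ and, by the disjointness remark above, $H\in W\subseteq\neg a_n$. As $n$ was arbitrary this would force $\Sigma_f^H=\emptyset$, contradicting the hypothesis; so $z:=f(H)$ lies in $\C$.

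Next I would prove $\bigcap\Sigma_f^H=\{z\}$. For $z\in\bigcap\Sigma_f^H$: if some $\cl{U_n}\in\Sigma_f^H$ failed to contain $z$, pick via regularity of $\C$ a basic $U_m\in\mathcal{B}$ with $z\in U_m$ and $\cl{U_m}\cap\cl{U_n}=\emptyset$; then $H\in f^{-1}[U_m]$ is an open neighborhood of $H$ disjoint from $f^{-1}[U_n]$, so $H\in\neg a_n$, contradicting $a_n\in H$. For the reverse inclusion, for each $U_n\in\mathcal{B}$ with $z\in U_n$ the preimage $f^{-1}[U_n]$ is an open neighborhood of $H$, so $H\in\intclosure{f^{-1}[U_n]}=a_n$ and $\cl{U_n}\in\Sigma_f^H$. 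Since $\mathcal{B}$ yields a neighborhood basis at $z$ in the Hausdorff space $\C$, $\bigcap\{\cl{U_n}:z\in U_n\in\mathcal{B}\}=\{z\}$, and this gives $\bigcap\Sigma_f^H\subseteq\{z\}$.

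The only mildly delicate point is the handling of the value $\infty$, which is absorbed by the fact that the basic closed balls sit entirely inside $\C$ and are therefore compact in $\C$; beyond that the argument is a direct reading off of continuity of $f$ into the one-point compactification and of the definition $a_n=\intclosure{f^{-1}[U_n]}$, so I do not expect a serious obstacle.
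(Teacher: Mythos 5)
Your proof is correct, but it takes a genuinely different route from the paper's. You anchor everything to the actual value $z=f(H)$: continuity of $f:St(\B)\to\mathbb{S}^2$ rules out $f(H)=\infty$ once $\Sigma_f^H\neq\emptyset$, and you then show $\bigcap\Sigma_f^H=\set{z}$ directly, using the (correct) identity $\neg\intclosure{A}=St(\B)\setminus\cl{A}$ for open $A$ to decide whether $a_n\in H$. The paper instead argues value-free, entirely inside the family $\Sigma_f^H$, never mentioning $f(H)$: existence of a point in the intersection comes from the finite intersection property (inherited from the filter $H$) together with compactness of some $\cl{U_m}\in\Sigma_f^H$; uniqueness comes from a separating basic ball $U_p$ plus the auxiliary fact that $w\in\bigcap\Sigma_f^H$ and $w\in U_n$ imply $a_n\in H$, which is proved by covering the compact set $\cl{U_m}\setminus U_p$ with finitely many basic balls whose closures miss $x$ and observing that the ultrafilter $H$, lying in $f^{-1}[\cl{U_m}\setminus U_p]$, must select one of the corresponding $a_{n_{z_i}}$. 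Your version is shorter and in fact delivers Lemma~\ref{extfunction} almost outright, since $\bigcap\Sigma_f^H=\set{f(H)}$ is exactly the Lemma's conclusion in the non-degenerate case. What the paper's value-free argument buys is reusability: the same proof is later invoked verbatim where no continuous function is available in advance --- in Claim~\ref{claim1} inside $V[G]$, and in Lemma~\ref{surj}, where $\Sigma_\tau^H$ is built from the boolean values $\Qp{\tau\in\dot{U}_n}$ and the function $f_\tau$ is being \emph{defined} as the unique point of $\bigcap\Sigma_\tau^H$, so one cannot presuppose a continuous $f$ with a known value at $H$. Your argument would need reworking to cover those instances, whereas the paper's combinatorial proof transfers by simply substituting $\intclosure{f^{-1}[U_n]}$ with $\Qp{\tau\in\dot{U}_n}$.
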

\begin{proof}
Let $m\in \omega$ be such that $\cl{U_m} \in \Sigma_f^H$.
\begin{description}
\item[\rm \underline{Existence:}] The family
\[
\hat{\Sigma}_f^H=\set{\cl{U_m} \cap \cl{U_n}:
\intclosure{f^{-1}[U_n]}\in H}
\]
is a family of closed subsets of $\cl{U_m}$. $\Sigma_f^H$
inherits the finite intersection property from $H$, hence so does
$\hat{\Sigma}_f^H$. Since $\cl{U_m}$ is compact, we can conclude that
\[
\emptyset \not= \bigcap \hat{\Sigma}_f^H \subseteq \bigcap
\Sigma_f^H
\]
\item[\rm \underline{Uniqueness:}] Suppose there
are two different points $x,y \in \bigcap \Sigma_f^H$. There exists
$p \in \omega$
such that $x\in U_p, y\notin \cl{U_p}$. The last relation guarantees that
$\cl{U_p}\notin \Sigma_f^H$. Now we show that for
$w\in\bigcap \Sigma_f^H$, $w \in U_n$ implies
$\intclosure{f^{-1}[U_n]}\in H$. Therefore $x\in U_p$ implies
$\cl{U_p}\in \Sigma_f^H$, which is absurd.
Suppose $\intclosure{f^{-1}[U_p]}\notin H$,
we have that: 
\[
H \in
 \intclosure{f^{-1}[U_p]}^c \cap
\intclosure{f^{-1}[U_m]}  \subseteq
f^{-1}[\cl{U_m} \setminus U_p]
\]
For each $z \in \cl{U_m} \setminus U_p$ there exists $U_{n_z}$ such
that
\[
z \in U_{n_z}
\wedge
x\notin\cl{U_{n_z}}
\]
This family of open
balls covers the
compact space $\cl{U_m} \setminus U_p$, so that there are
$z_1, \cdots, z_k \in \cl{U_m} \setminus U_p$ which verify the
following chain of inclusions:
\[
f^{-1}[\cl{U_m} \setminus U_p] \subseteq \bigcup_{1 \le i \le k} 
f^{-1}[U_{n_{z_i}}]
\subseteq \bigcup_{1 \le i \le k} \intclosure{f^{-1}[U_{n_{z_i}}]}
\]
There is therefore a $z_j$ such that
$\intclosure{f^{-1}[U_{n_{z_j}}]}\in H$, hence
$\cl{U_{z_j}}\in \Sigma_f^H$. This is absurd
since $x \notin \cl{U_{z_j}}$.
\end{description}
\end{proof}
Suppose $f(H)\not= \sigma_f^H$
and consider two open balls $U_1,U_2$ in $\mathcal{B}$
such that
\[
\cl{U_1} \cap \cl{U_2} =\emptyset
\]
\[
f(H)\in U_1
\]
\[
\sigma_f^H\in U_2
\]
It easily follows that both $\intclosure{f^{-1}[U_1]}$ and
$\intclosure{f^{-1}[U_2]}$ are in $H$ (the second assertion
can be shown along the same lines of the uniqueness proof in Claim \ref{claim}).
These two sets are disjoint, a contradiction follows.

The Lemma is proved.
\end{proof}

In order to prove Proposition \ref{prop:iso},  we need to generalize
what we have exposed in Remark \ref{code} about Borel codes. 
In particular we need to be able to describe what is the lift of an open (closed) set of $St(\B)$ to the
corresponding open (closed) set in $St(\B)^{V[G]}$ where $G$ is $V$-generic for $\B$.
%
%have to be taken among real numbers (to code the complexity of the Borel relation) 
%and elements of $\B$ (to code the basic open sets), since
%a basis for $St(\B)$ is
%$\set{\mathcal{O}_a:a\in \B}$.
The following can be shown starting from the clopen sets and then extending the proof to cover
the case of arbitrary open or closed sets.
\begin{fact} \label{borel}
Let $G$ be a $V$-generic
filter over $\B$.
Assume $R^V,S^V$ are two open or closed sets in $St(\B)^V$. Then
\[
R^V \subseteq S^V \Leftrightarrow R^{V[G]}\subseteq S^{V[G]}
\]
\end{fact}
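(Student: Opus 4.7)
The plan is to reduce the topological containment, which \emph{a priori} refers to points of the Stone space that exist only in the respective universe, to a purely set-theoretic statement about elements of $\B$ (together with parameters in $V$), which is then absolute between $V$ and $V[G]$ by standard $\Delta_1$-absoluteness.

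First I would handle the clopen case. Any clopen subset of $St(\B)^V$ is of the form $\mathcal{O}_b^V$ for a unique $b\in\B$, and likewise in $V[G]$ the lift is $\mathcal{O}_b^{V[G]}$. The containment $\mathcal{O}_b^V\subseteq\mathcal{O}_c^V$ is equivalent to $b\le_\B c$, since an ultrafilter containing $b$ but not $c$ would witness the failure, and conversely monotonicity gives the other direction. Since $\B\in V$ and the order on $\B$ is absolute, $b\le_\B c$ is decided identically in $V$ and in $V[G]$, so the clopen case is settled.

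Next I would lift this to arbitrary open sets. Any open $R\subseteq St(\B)^V$ is canonically coded by
\[
A_R=\{b\in\B: \mathcal{O}_b^V\subseteq R\}\in V,
\]
and we set $R^{V[G]}=\bigcup_{b\in A_R}\mathcal{O}_b^{V[G]}$. For two open sets $R,S$ I claim
\[
R^V\subseteq S^V\iff \forall b\in A_R\,\exists \text{ finite } F\subseteq A_S\ \big(b\le_\B \textstyle\bigvee F\big).
\]
The right-to-left implication uses only monotonicity. For left-to-right, fix $b\in A_R$: then the clopen (hence compact) set $\mathcal{O}_b^V$ is covered by the open cover $\{\mathcal{O}_c^V:c\in A_S\}$, so a finite subcover exists, and finite joins in $\B$ correspond to finite unions of basic clopens by the Fact in Section~\ref{sec1}. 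The displayed condition only mentions $\B,A_R,A_S$, all in $V$, and is arithmetic in them, hence absolute between $V$ and $V[G]$. Running the same equivalence in $V[G]$ gives $R^{V[G]}\subseteq S^{V[G]}$ iff the same set-theoretic condition, and the two conditions coincide.

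Finally, closed sets reduce to open ones: if $R^V=St(\B)^V\setminus U^V$ and $S^V=St(\B)^V\setminus W^V$ with $U,W$ open, then $R^V\subseteq S^V$ iff $W^V\subseteq U^V$, which is absolute by the previous step; the same reasoning in $V[G]$ yields the conclusion. I expect the main technical point to be compactness of clopen sets in the open-set step, which is what converts an abstract containment into a finitary condition on $\B$ that is amenable to $\Delta_1$-absoluteness; once that reduction is in place, the rest is bookkeeping.
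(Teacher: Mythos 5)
Your proposal is correct and takes essentially the same route as the paper's own proof: both use compactness of the basic clopen sets $\mathcal{O}_b$ to convert the containment of open sets into the finitary condition $b\le_\B\bigvee F$ for suitable finite $F\subseteq\B$, which is then absolute between $V$ and $V[G]$. Your canonical coding $A_R$ and the explicit complementation argument for closed sets are just slightly more careful packaging of what the paper covers with arbitrary indexed unions and the remark that the closed case is ``proved along the same lines.''
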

\begin{proof}
We deal with the case for open sets, the case for closed sets is proved along the same lines.
Let in $V$, $R=\bigcup_{i\in I}\mathcal{O}_{a_i}$ and $S=\bigcup_{j\in J}\mathcal{O}_{a_j}$.
Now set in $V[G]$,
$R^{V[G]}= \bigcup_{i\in I}\mathcal{O}_{a_i}^{V[G]}$ and $S^{V[G]}= \bigcup_{j\in J}\mathcal{O}_{a_j}^{V[G]}$.
Then $R\subseteq S$ holds in $V$ (or $R^{V[G]}\subseteq S^{V[G]}$ holds in $V[G]$) iff for all $i\in I$
$\mathcal{O}_{a_i}^V\subseteq \bigcup_{j\in J}\mathcal{O}_{a_j}^V$
($\mathcal{O}_{a_i}^{V[G]}\subseteq \bigcup_{j\in J}\mathcal{O}_{a_j}^{V[G]}$). 
By compactness, since $\mathcal{O}_{a_i}^V$ ($\mathcal{O}_{a_i}^{V[G]}$) is a clopen subset of 
$St(\B)^V$ in $V$
(a clopen subset of 
$St(\B)^{V[G]}$ in $V[G]$),
 there is a finite set $J_i\subseteq J$
such that $\mathcal{O}_{a_i}\subseteq \bigcup_{j\in J_i}\mathcal{O}_{a_j}$.
This occurs (both in $V$ or $V[G]$) if and only if $a_i\leq \bigvee_{j\in J_i} a_j$.
%The same argument works to prove $R\subseteq S$ assuming 
%$R^{V[G]}\subseteq S^{V[G]}$. 

Now notice that for any finite set $J_i$,
$V[G]\models a_i\leq \bigvee_{j\in J_i} a_j$ iff $V\models a_i\leq \bigvee_{j\in J_i} a_j$.

We get the thesis.
%\[
%R^{V[G]}= \bigcup_{i\in I}\mathcal{O}_{a_i}^{V[G]}\subseteq \bigcup_{j\in J}\mathcal{O}_{a_j}^{V[G]}=S^{V[G]}.
%\]
%if and only if
%\[
%R^V= \bigcup_{i\in I}\mathcal{O}_{a_i}^V\subseteq \bigcup_{j\in J}\mathcal{O}_{a_j}^V=S^V.
%\]
\end{proof}

\begin{proof}[Proof of Proposition \ref{prop:iso}]
Consider the $\B$-name
\[
\Sigma_f=\set{(\dot{U}_n,a_n): n \in \omega},
\]
where $a_n=\intclosure{f^{-1}[U_n]}$.
Standard forcing arguments give that
\begin{equation}\label{eqn:XXX}
\Qp{\exists! x (x\in \bigcap \Sigma_f)}=1_{\B}.
\end{equation}
We give a proof of this equality for the sake of completeness:
\begin{proof}[Proof of equation (\ref{eqn:XXX})]
\begin{claim}\label{claim1}
Let $G$ be a $V$-generic filter for $\B$. Then:
\[
V[G]\models \exists! x \cp{x\in \bigcap \Sigma_{f}^G}
\]
where $\Sigma_{f}^G=\set{\cl{U_n}^{V[G]}:a_n \in G }$.
\end{claim}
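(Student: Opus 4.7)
The plan is to verify Claim~\ref{claim1} directly inside $V[G]$, exploiting the absoluteness between $V$ and $V[G]$ of (a) the boolean operations and inequalities on $\B$, and (b) open and closed subsets of $\C$ defined from rational data. The argument splits into showing (i) $\Sigma_f^G$ is nonempty, (ii) $\Sigma_f^G$ has the finite intersection property in $\C^{V[G]}$ (so that by compactness of $\cl{U_m}^{V[G]}$ for any $\cl{U_m}^{V[G]} \in \Sigma_f^G$, the intersection $\bigcap \Sigma_f^G$ is nonempty), and (iii) at most one point lies in that intersection.

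For (i), the hypothesis $f \in C^+(St(\B))^V$ makes $f^{-1}[\C] = \bigcup_n f^{-1}[U_n]$ open dense in $St(\B)^V$, whence $\bigvee_n a_n = 1_\B$ holds in $\B$, and by $V$-genericity some $a_n$ lies in $G$. For (ii), if $a_{n_1}, \ldots, a_{n_k} \in G$ then their meet is a nonempty regular open set $W$ in $St(\B)^V$; a Baire category argument in the compact Hausdorff (hence Baire) space $St(\B)^V$ shows each $W \cap f^{-1}[U_{n_i}]$ is dense open in $W$ (because $W \subseteq \intclosure{f^{-1}[U_{n_i}]} \subseteq \cl{f^{-1}[U_{n_i}]}$), hence $\bigcap_i f^{-1}[U_{n_i}]$ meets $W$, producing a point of $St(\B)^V$ whose $f$-image lies in $\bigcap_i U_{n_i} \subseteq \bigcap_i \cl{U_{n_i}}$. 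Since $\bigcap_i \cl{U_{n_i}}$ is defined from rational data, its nonemptiness transfers to $V[G]$, giving the FIP there.

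The bulk of the work is (iii). Suppose $x \neq y$ are both in $\bigcap \Sigma_f^G$ inside $V[G]$, and choose a rational ball $U_p$ with $x \in U_p$ and $y \notin \cl{U_p}^{V[G]}$; it suffices to show $a_p \in G$, since then $\cl{U_p}^{V[G]} \in \Sigma_f^G$ would force $y \in \cl{U_p}^{V[G]}$. Assuming instead $\neg a_p \in G$, fix $m$ with $a_m \in G$ (so $x \in \cl{U_m}^{V[G]}$) and, inside $V[G]$, cover the compact set $\cl{U_m}^{V[G]} \setminus U_p$ by finitely many basic balls $U_{n_1}, \ldots, U_{n_r}$ whose closures avoid $x$. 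The indices $n_1, \ldots, n_r$ are integers, hence lie in $V$, so the inclusion $\bigcup_j U_{n_j} \supseteq \cl{U_m} \setminus U_p$ holds already in $V$; mimicking the uniqueness half of the Claim in Lemma~\ref{extfunction} inside $V$, the continuity of $f$ together with $\intclosure{f^{-1}[U_m]} \subseteq f^{-1}[\cl{U_m}]$ and $\neg a_p \subseteq St(\B) \setminus f^{-1}[U_p]$ yields the $\B$-inequality $a_m \wedge \neg a_p \leq \bigvee_j a_{n_j}$. This inequality transfers to $V[G]$, forcing some $a_{n_j} \in G$ by the ultrafilter property, whence $x \in \cl{U_{n_j}}^{V[G]}$, contradicting the choice of cover.

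The main obstacle is the delicate interplay in (iii) between choosing the cover in $V[G]$ (which depends on the specific point $x \in V[G]$, possibly not in $V$) and establishing the resulting boolean inequality back in $V$ so that absoluteness can be invoked. The other parts are essentially absoluteness translations of the topological arguments already used in the proof of Lemma~\ref{extfunction}.
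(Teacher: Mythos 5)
Your proof is correct and follows essentially the same route as the paper: nonemptiness of $\Sigma_f^G$ via predensity of $\set{a_n:n\in\omega}$ and $V$-genericity, then existence and uniqueness of the intersection point by the compactness and rational-ball covering arguments of Claim 4.10 (the claim inside Lemma~\ref{extfunction}). The paper simply says the singleton part ``can be carried out as in Claim \ref{claim}''; your steps (ii) and (iii) — running the Baire-category and covering arguments in $V$, noting the finitely many indices lie in $V$, deriving the boolean inequality $a_m\wedge\neg a_p\leq\bigvee_j a_{n_j}$ there, and transferring via absoluteness of statements about rational data — are exactly the details that phrase tacitly requires, correctly supplied.
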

\begin{proof}[Proof of the claim]
The preimage of $\C$
through $f$ contains
an open dense subset of $St(\B)$ in $V$, hence it follows that
\[
D=\set{a_n=\intclosure{f^{-1}[U_n]}: n\in\omega}\in V
\]
is a predense subset of $\B^+$. 
Since $G$ is $V$-generic, $G\cap D \not= \emptyset$.
Thus $a_m \in G$ and
$\cl{U_m}^{V[G]} \in \Sigma_{f}^G$ for some $m\in\omega$.
The proof that $\bigcap \Sigma_{f}^G$ is a singleton can be carried out
as in Claim \ref{claim}.
\end{proof}
The Claim holds for all $V$-generic filters $G$ for $\B$, thus
\[
\Qp{\exists! x (x\in \bigcap \Sigma_{f})}=1_{\B}.
\]
completing the proof of equation (\ref{eqn:XXX}).
\end{proof}
$V^{\B}$ is full, hence there is a $\B$-name
$\tau_f$ such that
\[
\Qp{\tau_f \in \bigcap \Sigma_f} = 1_{\B}.
\]
This is a $\B$-name for a complex number.
Moreover, if $\tau$ is a $\B$-name for a complex number and
\[
\Qp{\tau \in \bigcap \Sigma_f} = 1_{\B},
\]
then, from
\[
(\tau_f \in \bigcap \Sigma_f)\wedge(\tau \in \bigcap \Sigma_f)\wedge(\exists! x
 (x\in \bigcap \Sigma_f))
\rightarrow \tau = \tau_f
\]
it follows that:
\[
\Qp{\tau=\tau_f}=1_{\B}.
\]
This shows that the map $f\mapsto\tau_f$ can be defined.

To conclude the proof of Proposition~\ref{prop:iso} we still must show that
\begin{equation}\label{eqn:YYY}
\Qp{\tau_f\in\dot{U}_n}=\intclosure{f^{-1}[U_n]}=a_n
\end{equation} 
\begin{proof}[Proof of equation (\ref{eqn:YYY})]
Let $G$ be a
$V$-generic filter for $\B$.
On the one hand we have
(using the same proof of the uniqueness part
in Claim \ref{claim}) that if $(\tau_{f})_G \in U_n^{V[G]}$
then $a_m \in G$ for some $m$ such that $(\dot{U}_n,a_m)\in \Sigma_f$ and $a_m\in G$,
which necessarily gives that $m=n$, obtaining
\[
\Qp{\tau_f \in \dot{U}_n} \le a_n.
\]
On the other hand
\[
G \in {f^{V[G]}}^{-1}[U_n^{V[G]}] \Rightarrow
(\tau_f)_G=^{by~\ref{extension}} f^{V[G]}(G)\in U^{V[G]}_n \Rightarrow
\Qp{\tau_{f} \in \dot{U}_n}\in G
\]
which means, interpreting $\Qp{\tau_{f} \in \dot{U}_n}$ as a clopen
subset of $St(\B)^{V[G]}$, that
\[
{f^{V[G]}}^{-1}[U_n^{V[G]}] \subseteq \cp{\Qp{\tau_{f} \in \dot{U}_n}}^{V[G]}.
\]
Lemma \ref{borel} guarantees that this is equivalent to
\[
f^{-1}[U_n^V] \subseteq \Qp{\tau_{f} \in \dot{U}_n}.
\]
Since $\Qp{\tau_{f} \in \dot{U}_n}$ is clopen, this implies that
\[
a_n=\intclosure{f^{-1}[U_n^V]}\le\Qp{\tau_{f}\in \dot{U}_n}
\]
\end{proof}
Proposition~\ref{prop:iso} is proved.
\end{proof}

\begin{proof}[Proof of Lemma~\ref{surj}]
The proof that $\Sigma_\tau^H$ is non-empty iff its intersection has
one single point can be carried out as in Claim \ref{claim}
substituting all over the proof $\intclosure{f^{-1}[U_n]}$
with $\Qp{\tau \in \dot{U}_n}$.

\begin{description}
\item[\rm \underline{Preimage of $\set{\infty}$ is nowhere dense:}]
We show that the preimage of $\C$ through
$f_{\tau}$ contains an open dense set. Set
\[
a_n=\Qp{\tau \in \dot{U}_n}
\]
and consider the set $A=\set{a_n:n \in \omega}$. We show that:
\[
\bigvee_{n\in \omega} a_n =1_{\B}.
\]

Since $\tau$ is a $\B$-name for a complex number in $M$, if
$G$ is a $V$-generic filter over $\B$ we
have:
\[
V[G]\models \tau^G \in \C^{V[G]}
\]
We can thus infer
\[
V[G]\models \exists n\in\omega (\tau^G \in U_n)
\]
for all $V$-generic filters $G$, since $\C^{V[G]}\cap V[G]=\bigcup_{n\in\omega} U_n^{V[G]}\cap V[G]$.
%Since generic extensions and $\pi$ preserve $\omega$, we get that $\check{\omega}^G=\omega^{N[G]}$.
Thus:
\[
\bigvee_{n\in \omega}a_n =\Qp{\exists n \in \check{\omega}(\tau \in \dot{U}_n)}
\ge 1_{\B}
\]
This implies that $A$ is predense and therefore that $\bigcup_{n \in \omega} \mathcal{O}_{a_n}$ is
dense in $St(\B)$.
\item[\rm \underline{Continuous:}] Let $H\in St(\B)$ be
in the preimage of $\C$, and let $U$ be an open subset
of $\C$ containing $f_{\tau}(H)$. Consider
$U_k\in\mathcal{B}$ such that
\[
f_{\tau}(H) \in U_k
\]
\[
\cl{U_k}\subseteq U
\]
Since
\begin{equation}\label{formula}
f_{\tau}(H) \in U_k \Rightarrow a_k \in H, \tag{1}
\end{equation}
(this can be proved as in
the uniqueness part in Claim \ref{claim} substituting
$\intclosure{f^{-1}[U_n]}$
with $\Qp{\tau \in \dot{U}_n}$), and since
the following inclusion holds
\[
\mathcal{O}_{a_k}\subseteq f_{\tau}^{-1}(U),
\]
the continuity of $f_{\tau}$ for points in the preimage of $\C$ is proved.

Consider now $H \in f_{\tau}^{-1}(\set{\infty})$. Let $A$ be an open
neighborhood of $\infty$, and let $U_k\in \mathcal{B}$ be such that:
\[
\cl{U_k}^c \subseteq A
\]
We also consider $U_l$ such that
\[
\cl{U_k} \subseteq U_l
\]
By definition of $f_{\tau}$ we have that $H\in \mathcal{O}_{a_l}^c$, and by
equation (\ref{formula}) the image of any element in the open set $\mathcal{O}_{a_l}^c$ cannot belong
to $U_l$. Thus
\[
\mathcal{O}_{a_l}^c \subseteq f_{\tau}^{-1}[U_l^c]
\subseteq f_{\tau}^{-1}[\cl{U_k}^c]\subseteq f_{\tau}^{-1}[A]
\]
\item[\rm \underline{$\tau_{f_{\tau}}=\tau$:}]
We already know that (see equation (\ref{formula})):
\[
f^{-1}_{\tau}[U_n] \subseteq \mathcal{O}_{a_n}
\]
The second set is clopen, therefore:
\begin{align}
\Qp{\tau_{f_{\tau}} \in \dot{U}_n}= \intclosure{f^{-1}_{\tau}[U_n]}
\subseteq \mathcal{O}_{a_n} \tag{2}\label{f2}
\end{align}
Toward a contradiction, assume $\Qp{\tau = \tau_{f_{\tau}}}\not= 1_{\B}$
and let $G$ a $V$-generic filter which verifies
\[
V[G]\models\tau_G \not= (\tau_{f_{\tau}})_G
\]
Thus there exists $n\in \omega$
such that:
\[
(\tau_{f_{\tau}})_G \in U_n^{V[G]}
\]
\[
\tau_G \notin U_n^{V[G]}
\]
The inclusion relation (\ref{f2}) implies
\[
\Qp{\tau_{f_{\tau}}\in \dot{U}_n} \le a_n=\Qp{\tau\in \dot{U}_n}
\]
but by Cohen's Forcing Theorem $\Qp{\tau_{f_{\tau}} \in \dot{U}_n}\in G$. This is a contradiction.
\end{description}
The Lemma is proved.
\end{proof}

\begin{proof}[Proof of Lemma~\ref{lem:embed}]
We will consider in detail the case of $R \subseteq \C$ a unary
Borel relation in $\C$, the general case for $n$-ary $R$ is immediate. 
Given $f\in C^+(St(\B))$, consider
$\Qp{R(f)}$ and $\Qp{\tau_f \in \dot{R}}$ as regular open
subsets of $St(\B)$. In order to show that
they are equal, it is sufficient to prove that their symmetric difference
is meager. By definition, we already know that $\Qp{R(f)}$ has meager
difference with the set
\[
\set{H\in St(\B):f(H)\in R}=f^{-1}[R].
\]
Therefore it suffices to prove that $\Qp{\tau_f \in \dot{R}}$ and $f^{-1}[R]$
have meager difference.
The proof proceeds step by step on the hierarchy of Borel sets $\Sigma^0_{\alpha}$,
$\Pi_{\alpha}^0$, for $\alpha$ a countable ordinal.
\begin{description}
\item[\rm \underline{$\Sigma^0_1$:}] Let $R$ be an element of the basis
\[
\mathcal{B}=\set{U_n:n\in \omega}
\]
defined in Remark \ref{code}. The thesis follows from Proposition \ref{prop:iso}, in fact
\[
\Qp{\tau_f \in \dot{U}_n}=\intclosure{f^{-1}[U_n]}
\]
which has meager difference with $f^{-1}[U_n]$. Consider now
\[
R=\bigcup_{i\in \mathcal{I}}U_i
\]
where $\mathcal{I}$ is a countable set of indexes. In this case
we have that
\[
f^{-1}[R]=\bigcup_{i \in \mathcal{I}}f^{-1}[U_i]
\]
and
\[
\Qp{\tau_f \in \dot{R}}=\bigvee_{i \in \mathcal{I}}\Qp{\tau_f \in \dot{U}_i}=
\intclosure{A}
\]
where $A=\bigcup_{i\in\mathcal{I}}\Qp{\tau_f \in \dot{U}_i}$.
For each $i \in \mathcal{I}$, the sets $f^{-1}[U_i]$ and $\Qp{\tau_f \in \dot{U}_i}$
have meager difference, thus $
f^{-1}[R] \Delta A$
is meager.
The proof is therefore concluded because $A
\Delta  \intclosure{A}$
is meager.
\item[\rm \underline{$\Sigma^0_{\alpha} \Rightarrow \Pi^0_{\alpha}$:}] Suppose $R\in \Pi_{\alpha}^0$,
and that the thesis holds for Borel sets in $\Sigma_{\alpha}^0$.
By definition $R^c \in \Sigma_{\alpha}^0$, therefore:
\[
f^{-1}[R^c]\Delta \Qp{\tau_f \in \dot{R}^c} \text{ is meager}
\]
hence
\[
f^{-1}[R] \Delta \Qp{\tau_f \in \dot{R}} \text{ is meager}
\]
\item[\rm \underline{$\Pi^0_{\alpha}\Rightarrow \Sigma^0_{\alpha+1}$:}] This item
can be proved as the second part of the case $\alpha=1$, substituting the $U_n$ with
Borel sets in $\Pi^0_{\alpha}$.
\item[\rm \underline{$\Sigma^0_{\beta}$ for $\beta$ limit ordinal:}] If the thesis holds for
$\alpha < \beta$, then the proof can be carried similarly to the case $\Pi^0_{\alpha} \Rightarrow
\Sigma^0_{\alpha+1}$.
\end{description}
The Lemma is proved.
\end{proof}

This concludes the proof of Theorem~\ref{thm:iso} for the case $Y=\C$.

\subsection{$C(St(\B))/G$ and $C^+(St(\B))/G$ in generic extensions}
The following  proposition shows that if we restrict our attention to $V$-generic filters for $\B$ then 
$C(St(\B))$ is a family of names large enough to describe all complex numbers of $V[G]$.
\begin{proposition} \label{gener}
Assume $V$ is a model of $\ZFC$, $\B$ a complete boolean algebra in $V$ and $G$
a $V$-generic filter in $\B$. Then
\[
C^+(St(\B))/G \cong C(St(\B))/G
\]
\end{proposition}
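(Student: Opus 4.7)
The plan is to show that the inclusion $C(St(\B))\hookrightarrow C^+(St(\B))$ descends to a bijection $[g]_G\mapsto[g]_G$ between the two quotient structures, and in fact to an isomorphism with respect to every lifted Borel relation. Well-definedness and injectivity come for free: for $g_1,g_2\in C(St(\B))$ the boolean equality $\Qp{g_1=g_2}^{St(\B)}$ and, more generally, every boolean value $R^{St(\B)}(\vec g\,)$ lifting a Borel $R$ are given by the same regular-open formula whether the $g_i$ are viewed inside $C(St(\B))$ or inside $C^+(St(\B))$, so $[g_1]_G=[g_2]_G$ holds in one quotient iff it holds in the other. The entire content of the proposition therefore reduces to surjectivity.

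The main obstacle is that a generic $f\in C^+(St(\B))$ genuinely takes the value $\infty$ on a closed nowhere dense set, and one cannot in general continuously extend $f\restriction f^{-1}[\C]$ to a complex-valued function on $St(\B)$. This is exactly where $V$-genericity enters: the point $G$ must avoid the nowhere dense set $f^{-1}[\{\infty\}]\in V$, so that $f(G)\in\C$ and continuity at $G$ permits localization to a clopen neighborhood on which $f$ is bounded.

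Concretely, I would first verify that
\[
A=\set{a\in\B^+:\mathcal{O}_a\subseteq f^{-1}[\C]}
\]
is a dense subset of $\B^+$ lying in $V$: for any $b\in\B^+$, the open set $\mathcal{O}_b\cap f^{-1}[\C]$ is nonempty (since $f^{-1}[\C]$ is open dense), and since clopen sets form a basis it contains some $\mathcal{O}_c$ with $c\le b$ in $A$. By $V$-genericity of $G$ there is $a_0\in G\cap A$, so $G\in\mathcal{O}_{a_0}\subseteq f^{-1}[\C]$ and $f(G)\in\C$. Continuity of $f$ at $G$ then supplies $M>0$ and $a\in G$ with $a\le a_0$ such that $|f(H)|<M$ on $\mathcal{O}_a$ (refine $a_0$ inside the $f$-preimage of a bounded ball around $f(G)$, which is open and contains $G$, then use the clopen basis again).

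Finally, define $g:St(\B)\to\C$ by $g\restriction\mathcal{O}_a=f\restriction\mathcal{O}_a$ and $g\equiv 0$ on $\mathcal{O}_{\neg a}$. Continuity follows from gluing along the clopen partition $\{\mathcal{O}_a,\mathcal{O}_{\neg a}\}$, and the range lies in $\C$ since $|f|<M$ on $\mathcal{O}_a$, so $g\in C(St(\B))$. Because $\{H:f(H)=g(H)\}$ contains the regular open set $\mathcal{O}_a$, we get $\Qp{f=g}^{St(\B)}\supseteq\mathcal{O}_a$; as $a\in G$ this forces $\Qp{f=g}\in G$ and hence $[f]_G=[g]_G$. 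The only non-routine step is the $V$-generic extraction of the bounding clopen neighborhood; the clopen cut-off at the end is elementary.
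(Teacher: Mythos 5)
Your overall strategy is the paper's own: reduce to surjectivity of the map induced on quotients by the inclusion $C(St(\B))\subseteq C^+(St(\B))$, then use $V$-genericity to land in a clopen set avoiding the pole set of $f$, and cut $f$ off to $0$ outside it. (The paper realizes the genericity step with the canonical predense family $D=\set{a_n:n\in\omega}$, where $a_n=\intclosure{f^{-1}[U_n]}$, as in Claim~\ref{claim1}, so that $f$ maps $\mathcal{O}_{a_m}$ into $\cl{U_m}$ by Lemma~\ref{extfunction}; you use the ad hoc dense set $A$ instead, which is interchangeable.) However, your middle step is not meaningful as written: $G$ is $V$-generic, hence $G\notin V$, so $G$ is \emph{not} a point of the space $St(\B)$ on which $f\in V$ is defined. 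The expressions $f(G)$ and ``continuity of $f$ at $G$'' only make sense after passing to the canonical extension $f^{V[G]}\in V[G]$ of Remark~\ref{rmk:keyrmk}, and transferring facts about $f^{V[G]}$ back to statements about $f$ in $V$ requires the absoluteness apparatus of Fact~\ref{borel} and Corollary~\ref{extension}, none of which you invoke; this conflation is exactly what the paper's surrounding machinery is designed to avoid.

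Fortunately the flawed step is also superfluous, so your proof survives its deletion. Once genericity hands you $a_0\in G\cap A$, i.e.\ $\mathcal{O}_{a_0}\subseteq f^{-1}[\C]$ holding in $V$, the restriction $f\restriction\mathcal{O}_{a_0}$ is a continuous $\C$-valued function on a clopen, hence compact, set, so it is automatically bounded --- no bounding ball around a ``value at $G$'' is needed, and in fact boundedness plays no role at all: defining $g$ to agree with $f$ on $\mathcal{O}_{a_0}$ and to vanish on $\mathcal{O}_{\neg a_0}$ already gives $g\in C(St(\B))$ by gluing along the clopen partition, and your closing computation $\Qp{f=g}\geq a_0\in G$ then goes through verbatim. (The paper verifies the corresponding identification via Corollary~\ref{extension}, i.e.\ by evaluating the extended functions at $G$ in $V[G]$; your direct regular-open computation in $V$ is correct and, if anything, more elementary.) With that one repair your argument is correct and essentially identical to the paper's.
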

\begin{proof}
We need to show that for each $f \in C^+(St(\B))$ we can find an $\tilde{f} \in C
(St(\B))$ such that
\[
\Qp{f = \tilde{f}} \in G
\]
which, by Corollary \ref{extension}, is equivalent to
\[
f^{V[G]}(G)=\tilde{f}^{V[G]}(G)
\]
We denote again
\[
a_n=\intclosure{f^{-1}[U_n]}.
\]
Proceeding as in Claim \ref{claim1}, we can find 
$m\in\omega$ such that $a_m \in G$. For each $H\in \mathcal{O}_{a_m}$ we have that
\[
f(H)\in \cl{U_m}
\]
by Lemma \ref{extfunction}.
We can therefore consider the restriction of $f$ to $\mathcal{O}_{a_m}$ (which is clopen)
and extend it to a $\tilde{f}\in C
(St(\B))$ setting it to be constantly $0$ on $\mathcal{O}_{\neg a_m}$.
The implication
\[
f\restriction_{\mathcal{O}^V_{a_m}}=\tilde{f}\restriction_{\mathcal{O}^V_{a_m}}
\Rightarrow
f^{V[G]}\restriction_{\mathcal{O}^{V[G]}_{a_m}}=\tilde{f}^{V[G]}\restriction_{\mathcal{O}^{V[G]}_{a_m}}
\]
guarantees the thesis, since
$G\in \mathcal{O}^{V[G]}_{a_m}$.
\end{proof} 

\subsection{Proof of Theorem  \ref{thm:iso} for an arbitrary Polish space $Y$}
We outline the proof of Theorem  \ref{thm:iso} for the case of an arbitrary Polish space $Y$.
The strategy of the proof is exactly the same for the case $Y=\C$. 
At some points the corresponding Lemma needs a slightly 
more elaborate proof, we outline when this is the case.
All over this section let 
\[
Y=\bigcap_{n\in\mathbb{N}}\bigcup\set{B_{r_{mn}}(q_{mn}):m\in\mathbb{N}}
\subseteq \mathcal{H}
\] 
denote an arbitrary Polish space seen as a $G_\delta$ subset of $\mathcal{H}$
and 
$\set{U_n:n\in\omega}$ denote its basis 
\[
\hat{\mathcal{B}}_Y=\set{B_r(q)\cap Y:r\in\mathbb{Q}, q\in D}
\]
as done in section~\ref{section:names}.
\begin{lemma} \label{extfunction1}
Assume $f \in V$ is an element of $C^+(St(\B),Y)$.
For $H\in St(\B)$ we define
\[
\Sigma^H_f=\set{\cl{U_n}:
\intclosure{f^{-1}[U_n]} \in H}
\]
Then,
for $H\in St(\B)$ such that $f(H)\in Y$, we have:
\[
\set{f(H)}= Y\cap \bigcap\Sigma_f^H,
\]
moreover $\bigcap\Sigma_f^H$ is always non-empty.
\end{lemma}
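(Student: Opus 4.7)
The plan is to follow the proof of Lemma~\ref{extfunction} almost verbatim, with the compact Hilbert cube $\mathcal{H}$ taking over the role previously played by $\mathbb{S}^2$, and with one structural change: points of $\bigcap\Sigma^H_f$ lying in $\mathcal{H}\setminus Y$ can no longer be excluded as easily as the single point $\infty$ was, which is exactly why the statement intersects with $Y$ in order to pin down $f(H)$. Throughout I would interpret $\cl{U_n}$ as the closure in $\mathcal{H}$ of the basic neighborhood $U_n = B_{r_n}(q_n)\cap Y \in \hat{\mathcal{B}}_Y$; since $f^{-1}[Y]$ is comeager in $St(\B)$, one has $\intclosure{f^{-1}[U_n]} = \intclosure{f^{-1}[B_{r_n}(q_n)]}$, so the compact sets $\cl{B_{r_n}(q_n)}\subseteq\mathcal{H}$ are what actually enter the arguments.

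The non-emptiness of $\bigcap\Sigma^H_f$ would follow from two observations. First, because $\bigcup_n U_n = Y$ and $f^{-1}[Y]$ is comeager in $St(\B)$, the regular open sets $\intclosure{f^{-1}[U_n]}$ form a predense family in $\B$, so $\Sigma^H_f\neq\emptyset$ for every $H$. Second, $\Sigma^H_f$ has the finite intersection property as a family of closed subsets of $\mathcal{H}$: given $\cl{U_{n_1}},\dots,\cl{U_{n_k}}\in\Sigma^H_f$, the intersection $\bigcap_i\intclosure{f^{-1}[U_{n_i}]}$ is a nonempty open subset of $St(\B)$ lying in $H$, and any point $H'$ in its dense open subset $\bigcap_i f^{-1}[U_{n_i}]$ has $f(H')\in\bigcap_i U_{n_i}\subseteq\bigcap_i\cl{U_{n_i}}$. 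Compactness of $\mathcal{H}$ then yields $\bigcap\Sigma^H_f\neq\emptyset$.

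For the identification $\{f(H)\} = Y\cap\bigcap\Sigma^H_f$ under the hypothesis $f(H)\in Y$, the inclusion $f(H)\in\bigcap\Sigma^H_f$ is a direct continuity argument: if $\cl{U_n}\in\Sigma^H_f$ then $H\in\cl{f^{-1}[U_n]}$, so $f(H)\in\cl{f[f^{-1}[U_n]]}\subseteq\cl{U_n}$. Uniqueness is where the core compactness argument from the uniqueness half of Claim~\ref{claim} must be reused almost unchanged: given $x\in Y\cap\bigcap\Sigma^H_f$ with $x\neq f(H)$, Hausdorffness of $\mathcal{H}$ produces basic balls with $\cl{B_r(q)}\cap\cl{B_{r'}(q')}=\emptyset$ separating $f(H)$ from $x$, and it suffices to establish that $f(H)\in U_p := B_r(q)\cap Y$ forces $\cl{U_p}\in\Sigma^H_f$ --- which contradicts $x\in\bigcap\Sigma^H_f$ since $x\notin\cl{U_p}$. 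This implication is proved exactly as in Lemma~\ref{extfunction}: assuming $\intclosure{f^{-1}[U_p]}\notin H$, pick any $\cl{U_m}\in\Sigma^H_f$, use continuity to show that $H\in\intclosure{f^{-1}[U_m]}\cap\neg\intclosure{f^{-1}[U_p]}\subseteq f^{-1}[\cl{U_m}\setminus U_p]$, and cover the compact set $\cl{U_m}\setminus U_p\subseteq\mathcal{H}$ --- which does not contain $f(H)$ --- by finitely many basic balls whose closures avoid $f(H)$, producing some $\cl{U_{n_j}}\in\Sigma^H_f$ with $f(H)\notin\cl{U_{n_j}}$, contradicting $f(H)\in\bigcap\Sigma^H_f$.

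The only delicate point --- and essentially the sole new ingredient relative to the $\C$ case --- is to be consistent in taking all closures in the compact ambient space $\mathcal{H}$ rather than in $Y$, since sets like $\cl{U_m}\setminus U_p$ need not be compact in $Y$ but are always compact in $\mathcal{H}$. Once this bookkeeping is fixed, no further complication arises and the $\C$-proof transfers essentially verbatim.
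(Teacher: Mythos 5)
Your route is exactly the paper's: the paper's own proof of Lemma~\ref{extfunction1} is precisely ``$\Sigma^H_f$ is a family of closed subsets of the compact space $\mathcal{H}$ with the finite intersection property, hence $\bigcap\Sigma^H_f\neq\emptyset$, and uniqueness of the point of $Y\cap\bigcap\Sigma^H_f$ runs as the uniqueness part of Lemma~\ref{extfunction}''; your bookkeeping observation that $\intclosure{f^{-1}[U_n]}=\intclosure{f^{-1}[B_{r_n}(q_n)]}$ (since $f^{-1}[\mathcal{H}\setminus Y]$ is meager) and that all closures must be taken in $\mathcal{H}$ is the right way to make the covering argument transfer. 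However, one step of your non-emptiness argument is wrong as stated: ``the regular open sets $\intclosure{f^{-1}[U_n]}$ form a predense family in $\B$, so $\Sigma^H_f\neq\emptyset$ for every $H$''. Predensity, i.e.\ $\bigvee_n a_n=1_\B$, does \emph{not} imply that every ultrafilter contains some $a_n$: for an infinite antichain (and hence for a predense family with no predense finite subfamily) the union $\bigcup_n\mathcal{O}_{a_n}$ is only a dense open \emph{proper} subset of $St(\B)$, because $\set{\lnot a_n:n\in\omega}$ has the finite intersection property and extends to an ultrafilter missing every $a_n$ --- this is exactly the Fact in Section~2 and the phenomenon of Remark~\ref{rmk:C+xvsCX}. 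Indeed, if your inference were valid, the same argument applied to $Y=\C$ with its usual basis (equally predense) would show $\Sigma^H_f\neq\emptyset$ for \emph{every} $H$, contradicting the existence in Lemma~\ref{extfunction} of functions $f\in C^+(St(\B))$ with $f(H)=\infty$.

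The claim itself is true and easily repaired, and the repair shows where compactness of $\mathcal{H}$ (as opposed to $\C$) genuinely enters: finitely many basic balls $B_{r_{n_1}}(q_{n_1}),\dots,B_{r_{n_k}}(q_{n_k})$ already cover the compact space $\mathcal{H}$, so $St(\B)=\bigcup_{i\le k}f^{-1}[B_{r_{n_i}}(q_{n_i})]\subseteq\bigcup_{i\le k}\intclosure{f^{-1}[U_{n_i}]}$, and since this is a \emph{finite} union of clopen sets, every ultrafilter $H$, viewed as a point of $St(\B)$, lies in (hence contains) some $a_{n_i}$; thus a finite subfamily is predense, which is the correct reason $\Sigma^H_f\neq\emptyset$ for all $H$. (Alternatively one can note, as the paper implicitly does, that the FIP-plus-compactness step never needs $\Sigma^H_f\neq\emptyset$.) A last cosmetic slip: in your FIP verification, $\bigcap_i f^{-1}[U_{n_i}]$ is not an open subset of $\bigcap_i\intclosure{f^{-1}[U_{n_i}]}$, only comeager in it, because $f^{-1}[Y]$ is comeager rather than open; the Baire property of $St(\B)$ still produces the point $H'$ you need, so nothing else in your argument changes.
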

\begin{proof}
$\bigcap\Sigma_f^H$ is always non-empty, since it is the intersection of a family
with the finite intersection property of closed sets of a compact space.
The proof that $|\bigcap\Sigma_f^H\cap Y|\leq 1$ if $f(H)\in Y$ runs as the uniqueness part of Lemma~\ref{extfunction}. 
\end{proof}

\begin{proposition} \label{prop:iso1}
Fix $V$ a transitive model of $\ZFC$ and $\B\in V$ a boolean algebra which 
$V$ models to be complete.
Let $f \in C^+(St(\B),Y)$.
For
each $n\in\omega$ let
\[
a_n=\intclosure{f^{-1}[U_n]}
\]
where $\mathcal{B}_Y=\set{U_n:n\in\omega}$ is the fixed countable basis for $Y$.
There exists a $\tau_f \in Y^{\B}$ such that\footnote{If $U_n=Y\cap B_r(q)$ and $G$ is 
$V$-generic for $\B$,
$\dot{U_n}$ denotes the $\B$-name for the elements in
the Hilbert cube of $V[G]$ belonging to 
\[
\bigcap_{n\in\mathbb{N}}\bigcup\set{(B_{r_{mn}}(q_{mn}))^{V[G]}:m\in\mathbb{N}}
\cap (B_r(q))^{V[G]},
\]
where $(B_{r}(q))^{V[G]}$ is the ball in the Hilbert cube $\mathcal{H}^{V[G]}$
of rational radius $r$ and center $q$ as computed in $V[G]$.}
\[
\Qp{\tau_f \in \dot{U}_n}^{V^\B}=a_n
\]
for all $n\in\omega$.
Moreover any $\tau\in V^\B$ such that 
$\Qp{\tau\in \dot{U}_n}^{V^\B}=a_n$
for all $n\in\omega$ is also such that $\Qp{\tau=\tau_f}^{V^\B}=1_\B$.
\end{proposition}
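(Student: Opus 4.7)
The plan is to follow almost verbatim the proof of Proposition~\ref{prop:iso} for $\C$, adapting it to handle the $G_\delta$ nature of $Y \subseteq \mathcal{H}$. The only essential new ingredient is that the unique element of $\bigcap \Sigma_f$ belonging to $Y$ need not equal the whole of $\bigcap \Sigma_f$, since the latter (as an intersection of closed balls in the compactification $\mathcal{H}$) may contain extra points in $\mathcal{H} \setminus Y$. The meagerness condition built into the definition of $C^+(St(\B),Y)$ is exactly what guarantees that, generically, one falls inside $Y$.

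First I would define the $\B$-name $\Sigma_f = \set{(\dot{U}_n, a_n) : n \in \omega}$ and establish
\[
\Qp{\exists!\, x\, \bigl(x \in Y \cap \bigcap \Sigma_f\bigr)} = 1_\B.
\]
To do this, fix a $V$-generic filter $G$ for $\B$ and work in $V[G]$. Since $f \in C^+(St(\B),Y)$, the set $f^{-1}[\mathcal{H}\setminus Y]$ is a countable union of closed nowhere dense subsets of $St(\B)^V$; coding each such set by a maximal antichain in $\B$ and invoking $V$-genericity of $G$, one obtains $G \notin f^{-1}[\mathcal{H}\setminus Y]^{V[G]}$. Using Remark~\ref{rmk:keyrmk} to extend $f$ absolutely to $f^{V[G]}$ via the rule $H \mapsto \sigma_f^H$, it follows that $f^{V[G]}(G) \in Y^{V[G]}$, and then Lemma~\ref{extfunction1} applied in $V[G]$, combined with Fact~\ref{borel} to transport the closed sets $\cl{U_n}$ between $V$ and $V[G]$, yields $\set{f^{V[G]}(G)} = Y^{V[G]} \cap \bigcap \Sigma_f^G$. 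Since this holds for every $V$-generic $G$, Cohen's forcing theorem delivers the displayed boolean equation.

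Next I would use fullness of $V^\B$ to pick $\tau_f \in Y^\B$ with $\Qp{\tau_f \in Y \cap \bigcap \Sigma_f} = 1_\B$; the predicate ``$\tau_f \in Y$'' is $\Delta_1$ because $Y$ has been presented via a fixed $G_\delta$ code in $V$, so $\tau_f$ is genuinely a $\B$-name for an element of $Y$. To verify $\Qp{\tau_f \in \dot{U}_n} = a_n$, the inequality $\Qp{\tau_f \in \dot{U}_n} \le a_n$ is obtained by the uniqueness-style separation argument from Claim~\ref{claim}: if $(\tau_f)_G \in U_n^{V[G]}$ then no $\cl{U_m}$ with $\cl{U_m} \cap \cl{U_n} = \emptyset$ can appear in $\Sigma_f^G$, which, together with the generic choice of $G$, forces $a_n \in G$. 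The reverse inequality $a_n \le \Qp{\tau_f \in \dot{U}_n}$ follows exactly as in equation~(\ref{eqn:YYY}): the (obvious) inclusion $f^{-1}[U_n] \subseteq \Qp{\tau_f \in \dot{U}_n}$ in $V[G]$ transfers back to $V$ through Fact~\ref{borel}, and passing to $\intclosure{\cdot}$ of the left side yields $a_n$.

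For the moreover clause, any $\tau \in V^\B$ with $\Qp{\tau \in \dot{U}_n} = a_n$ for every $n$ automatically satisfies $\Qp{\tau \in Y \cap \bigcap \Sigma_f} = 1_\B$ by the same forcing argument, so by the already established uniqueness $\Qp{\tau = \tau_f} = 1_\B$. The main technical obstacle, as the excerpt itself flags, lies not in any single step but in orchestrating the interaction of Remark~\ref{rmk:keyrmk}, Lemma~\ref{extfunction1} and Fact~\ref{borel} when $Y$ admits no one-point compactification: the preimage $f^{-1}[\mathcal{H}\setminus Y]$ is now only a countable union of closed nowhere dense sets, so the ``generic point lands in $Y$'' argument must be executed predensity level by predensity level along the $G_\delta$ presentation of $Y$, rather than in a single step as was possible in the case $Y = \C$.
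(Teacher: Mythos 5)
Your proposal is correct and takes essentially the same approach as the paper, which in fact dispatches Proposition~\ref{prop:iso1} with the single remark that it has exactly the same proof as Proposition~\ref{prop:iso} for $\C$; your adaptation---replacing the single predense set $\set{a_n:n\in\omega}$ by one predense family per level of the $G_\delta$ presentation of $Y$ so that the generic point lands in $Y$, with uniqueness asserted only for $Y\cap\bigcap\Sigma_f$ as in Lemma~\ref{extfunction1}---is precisely the intended execution. One small point to make explicit when writing up the level-by-level step: membership $\sigma_f^G\in\cl{B_{r_{mn}}(q_{mn})}^{V[G]}$ alone does not place the generic point inside the $n$-th open level $W_n=\bigcup_m B_{r_{mn}}(q_{mn})$, so you should run the predensity argument with basic balls $B$ of shrinking diameter whose \emph{closures} are contained in $W_n$ (such balls still cover $Y$ since $\mathcal{H}$ is metrizable), after which the claimed inference $f^{V[G]}(G)\in Y^{V[G]}$ goes through.
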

\begin{proof}
This proposition has exactly the same proof as the corresponding Proposition~\ref{prop:iso} for $\C$.
\end{proof}
\begin{lemma}\label{surj1}
Assume $\tau \in Y^{\B}$. Consider
\begin{align*}
f_{\tau}:St(\B) &\to \mathcal{H} \\
H &\mapsto \sigma_\tau^H
\end{align*}
where, given
\[
\Sigma_\tau^H=\set{\cl{U_n}:\Qp{\tau\in \dot{U}_n}^{V^\B}\in H},
\]
$\sigma_\tau^H $ is the unique element
in $\bigcap \Sigma_\tau^H$ if $\Sigma_\tau^H$ is non-empty.
Otherwise $f(H)=\sigma_\tau^H$ is defined by extending by continuity $f$
on the others $H\in St(\B)$.
The function $f_\tau$ belongs to $C^+(St(\B),Y)$
and $\tau_{f_{\tau}}=\tau$.
\end{lemma}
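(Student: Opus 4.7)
The plan is to follow the strategy used for Lemma~\ref{surj} in the $\C$ case, with two refinements needed to accommodate an arbitrary Polish space $Y$ realized as a $G_\delta$ subset $Y=\bigcap_n V_n$ of $\mathcal{H}$, where $V_n=\bigcup_m B_{r_{mn}}(q_{mn})$. Set $a_n=\Qp{\tau\in\dot{U}_n}^{V^\B}$ and $b_{mn}=\Qp{\tau\in\dot{B}_{r_{mn}}(q_{mn})}^{V^\B}$. Since $\tau\in Y^\B$, for each $n$ the family $\set{b_{mn}:m\in\omega}$ has join $1_\B$, and the family $\set{a_k:k\in\omega}$ is likewise predense in $\B$. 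Consequently $A=\bigcup_k \mathcal{O}_{a_k}$ is open dense and $D=A\cap\bigcap_n\bigcup_m\mathcal{O}_{b_{mn}}$ is a dense $G_\delta$, hence comeager, subset of $St(\B)$.

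For $H\in D$, $\Sigma_\tau^H$ is non-empty, has the finite intersection property inherited from $H$, and consists of closed subsets of the compact space $\mathcal{H}$; hence $\bigcap\Sigma_\tau^H$ is non-empty. Arguing as in Claim~\ref{claim} with $\intclosure{f^{-1}[U_n]}$ replaced by $\Qp{\tau\in\dot{U}_n}^{V^\B}$, one shows that any two points of $\bigcap\Sigma_\tau^H\cap Y$ must coincide, while $H\in\bigcap_n\bigcup_m\mathcal{O}_{b_{mn}}$ forces $\bigcap\Sigma_\tau^H$ to meet $Y$. Declare $f_\tau(H)$ to be this unique point. Continuity of $f_\tau\restriction D\to Y\subseteq\mathcal{H}$ follows verbatim from the continuity step in Lemma~\ref{surj}: for a basic $U_k$ with $f_\tau(H_0)\in U_k$ one derives $a_k\in H_0$, so $\mathcal{O}_{a_k}\cap D\subseteq f_\tau^{-1}[U_k]$.

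Next, I would extend $f_\tau$ continuously to all of $St(\B)$. Since $\mathcal{H}$ is compact Hausdorff and $St(\B)$ is extremally disconnected, a continuous function from an open dense subset of $St(\B)$ into $\mathcal{H}$ extends uniquely to a continuous function $f_\tau:St(\B)\to\mathcal{H}$, and this extension agrees with the unique element of $\bigcap\Sigma_\tau^H$ whenever the latter is a singleton. To see $f_\tau\in C^+(St(\B),Y)$, write $\mathcal{H}\setminus Y=\bigcup_n(\mathcal{H}\setminus V_n)$: continuity of $f_\tau$ together with the inclusion $\bigcup_m\mathcal{O}_{b_{mn}}\subseteq f_\tau^{-1}[V_n]$ yields that each $f_\tau^{-1}[\mathcal{H}\setminus V_n]$ is closed with empty interior, hence nowhere dense, and the countable union is meager.

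The final step is to verify $\tau_{f_\tau}=\tau$ in $Y^\B$. By the uniqueness clause of Proposition~\ref{prop:iso1} it suffices to check $\Qp{\tau_{f_\tau}\in\dot{U}_n}^{V^\B}=\intclosure{f_\tau^{-1}[U_n]}=a_n$ for every $n$: the inclusion $\mathcal{O}_{a_n}\cap D\subseteq f_\tau^{-1}[U_n]$ yields $a_n\le\intclosure{f_\tau^{-1}[U_n]}$, and the reverse inequality follows from a Lemma~\ref{extfunction1}-style argument showing $f_\tau(H)\in U_n$ implies $a_n\in H$. The step I expect to be most delicate is the continuous extension together with the meagerness of $f_\tau^{-1}[\mathcal{H}\setminus Y]$: this is where the interplay between the $G_\delta$ presentation of $Y$ inside $\mathcal{H}$ and the extremal disconnectedness of $St(\B)$ is genuinely used, and where one must verify that the extension to points of $St(\B)\setminus D$ does not silently create a non-meager preimage outside $Y$.
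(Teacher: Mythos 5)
Your proposal follows the same route as the paper's proof: predensity of $\set{b_{mn}:m\in\omega}$ for each $n$, definition of $f_\tau$ on the dense $G_\delta$ set $D$, continuity there as in Lemma~\ref{surj}, extension by continuity to all of $St(\B)$, and $\tau_{f_\tau}=\tau$ via the uniqueness clause of Proposition~\ref{prop:iso1}; you also make explicit the meagerness of $f_\tau^{-1}[\mathcal{H}\setminus Y]$, which the paper leaves implicit. One small repair first: you quote the extension principle for \emph{open} dense subsets but apply it to the $G_\delta$ set $D$; what you actually need is that every \emph{dense} subset of a compact extremally disconnected space is $C^*$-embedded (Gillman--Jerison), applied coordinatewise to maps into $[0,1]^{\mathbb{N}}$ --- this covers $D$ and the step is then sound (the paper asserts the same extension without proof).

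There is, however, a genuine (though localized and fixable) gap at exactly the point you flagged as delicate: neither your claim that $H\in\bigcap_n\bigcup_m\mathcal{O}_{b_{mn}}$ forces $\bigcap\Sigma_\tau^H$ to meet $Y$, nor the inclusion $\bigcup_m\mathcal{O}_{b_{mn}}\subseteq f_\tau^{-1}[V_n]$, follows from your argument. From $b_{mn}\in H$ the Claim~\ref{claim}-style reasoning only yields $f_\tau(H)\in\cl{B_{r_{mn}}(q_{mn})}$, and the closure of a presenting ball need not be contained in $V_n$. Concretely, take $Y$ to be the irrationals of $[0,1]$ with $V_n$ the complement of the first $n$ rationals, presented by all rational intervals contained in $V_n$: for a Cohen-type name $\tau$, the values $\Qp{\tau\in(q-1/j,\,q)}$ (for $q$ rational) have the finite intersection property, and an ultrafilter $H$ containing all of them, together with for each $n$ a presenting interval of $V_n$ with right endpoint $q$, lies in your $D$, yet $\bigcap\Sigma_\tau^H=\set{q}$ misses $Y$. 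The standard repair --- which is what the ``suitably chosen family'' in the paper's definition of the presentation is for --- is to refine: replace each $A_n=\bigcup_m\mathcal{O}_{b_{mn}}$ by
\[
A_n'=\bigcup\set{\mathcal{O}_{\Qp{\tau\in\dot{B}}}:\cl{B}\subseteq V_n},
\]
which is still open dense because $\set{\Qp{\tau\in\dot{B}}:\cl{B}\subseteq V_n}$ is predense (in $V[G]$ the point $\tau_G$ lies in the open set $V_n^{V[G]}$, hence in some basic ball whose closure is contained in it, and $\cl{B}\subseteq V_n$ is absolute between $V$ and $V[G]$); on $\bigcap_n A_n'$ the value then lands in $\bigcap_n V_n=Y$. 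With $D$ so refined, your meagerness argument also goes through in a simpler form: each $f_\tau^{-1}[\mathcal{H}\setminus V_n]$ is closed and disjoint from the dense set $D$, hence nowhere dense, so the union is meager. After these two repairs your proof coincides with the paper's.
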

\begin{proof}
Notice that for all $n\in\omega$
\[
\Qp{\tau\in\bigcup\set{\dot{B}_{r_{mn}}(q_{mn}):m\in\mathbb{N}}}=1_\B,
\]
hence for all $n\in\omega$
\[
\set{\Qp{\tau\in\dot{B}_{r_{mn}}(q_{mn})}:m\in\omega}
\]
is predense in $\B^+$.
Seeing each $\Qp{\tau\in\dot{B}_{r_{mn}}(q_{mn})}$ as a clopen subset of $St(\B)$, we conclude that
\[
A_n=\bigcup\set{\Qp{\tau\in\dot{B}_{r_{mn}}(q_{mn})}:m\in\omega}
\]
is open dense in $St(\B)$ for all $n\in\omega$.
Hence $f_\tau$ is well defined (and continuous) on the dense $G_\delta$ subset of $St(\B)$ $\bigcap_{n\in\omega}A_n$.
Therefore $f$ can be extended by continuity to the whole of $St(\B)$.
The proof of the continuity of $f$ on $\bigcap_{n\in\omega}A_n$, and the fact that on
$\bigcap_{n\in\omega}A_n$ $f$ takes values in $Y$ can be carried as in the corresponding 
proof of Lemma~\ref{surj}. 
\end{proof}

The proof that 
\begin{equation}\label{eqn:ZZZY}
\Qp{f=g}^{C^+(St(\B),Y)}=\Qp{\tau_f=\tau_g}^{V^\B}
\end{equation}
for any $f,g\in C^+(St(\B),Y)$
is the same as the corresponding proof for equation~\ref{eqn:ZZZ}.

\subsection{Extensions of the boolean isomorphism}
In general any boolean predicate or function on the $\B$-valued model $C^+(St(\B),Y)$ can be transferred to a 
corresponding boolean predicate on $Y^\B$ using the above isomorphism $f\mapsto \sigma_f$.
\begin{definition}
Let $Y$ be a Polish space and $\B$ a complete boolean algebra.
For any boolean relation $R^{St(\B)}: C^+(St(\B),Y)^n\to\B$ (and boolean
function $F^{St(\B)}$)
\[
R^{\B}(\sigma_1,\dots,\sigma_n)=R^{St(\B)}(f_{\sigma_1},\dots,f_{\sigma_n}),
\]
(similarly we can define the boolean function $F^\B$).
\end{definition}
%It is possible to check (see \cite[Thm. XXX]{tesi}) that whenever $R$ is a Borel relation on $X^n$ with 
%$X$ Polish and $R^\B$ is defined according to , $R^\B(\sigma_1,\dots,\sigma_n)=R^{St(\B)}(f_{\sigma_1},\dots,f_{\sigma_n})$.

By Theorem~\ref{thm:iso} and Lemma~\ref{lem:embed}, we immediately have the following.
\begin{theorem}\label{thm:iso2}
Fix a signature 
\[
\mathcal{L}=\set{R_i:i\in I}\cup \set{F_j:j\in J}.
\]
and assume that $\langle R_i^{St(\B)}:i\in I \rangle$, $\langle F_j^{St(\B)}:j\in J\rangle$ are boolean interpretations 
of the signature making $C^+(St(\B),Y)$ a $\B$-valued model.
%Set $R_i^\B(\sigma_1,\dots,\sigma_n)=R_i^{St(\B)}(f_{\sigma_1},\dots,f_{\sigma_n})$ 
%and $\Qp{F_j^\B(\sigma_1,\dots,\sigma_n)=\sigma}^{\C^\B}=\Qp{F_j^{St(\B)}(f_{\sigma_1},\dots,f_{\sigma_n})=f_\sigma}$) on $X^{\B}$ for all $i\in I,j\in J$.
The map
\begin{align*}
\Gamma: C^+(St(\B),Y) &\to Y^{\B} \\
f &\mapsto\tau_f
\end{align*}
is an isomorphism of the $\B$-valued model
\[
\langle C^+(St(\B),Y), R_i^{St(\B)}:i\in I, F_j^{St(\B)}:j\in J\rangle
\]
with the $\B$-valued model
\[
\langle Y^\B, R_i^{\B}:i\in I, F_j^{\B}:j\in J\rangle.
\]
\end{theorem}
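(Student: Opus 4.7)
The plan is to show that the work has essentially been done: Theorem~\ref{thm:iso} and Lemma~\ref{surj1} already deliver an isomorphism of the equality-only boolean valued models, and the extension to the full signature $\mathcal{L}$ reduces to a one-line application of the substitution axiom~(iv) of Def.~\ref{str2} inside $C^+(St(\B),Y)$. Concretely, set $i=\id_\B$ and
\[
\Phi=\set{(f,\tau_f):f\in C^+(St(\B),Y)}.
\]
Theorem~\ref{thm:iso} yields that $\langle i,\Phi\rangle$ is an isomorphism of the $\B$-valued models $\langle C^+(St(\B),Y),=^{St(\B)}\rangle$ and $\langle Y^\B,=^\B\rangle$; Lemma~\ref{surj1} supplies the surjectivity clause, since every $\tau\in Y^\B$ satisfies $\tau_{f_\tau}=\tau$, witnessing $(f_\tau,\tau)\in\Phi$. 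Thus clauses (n1) and (n2) of Def.~\ref{mor} hold, with equality in (n2).

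The key identity I will use is
\[
\Qp{f_{\tau_f}=f}^{St(\B)}=\Qp{\tau_{f_{\tau_f}}=\tau_f}^{V^\B}=1_\B,
\]
where the first equality is part of Theorem~\ref{thm:iso} and the second is Lemma~\ref{surj1} applied to $\tau=\tau_f$. Now given a relation symbol $R$ with interpretation $R^{St(\B)}$ on $C^+(St(\B),Y)$ (making it a $\B$-valued model), the definition of $R^\B$ given just before the theorem reads
\[
R^\B(\tau_{f_1},\dots,\tau_{f_n})=R^{St(\B)}(f_{\tau_{f_1}},\dots,f_{\tau_{f_n}}).
\]
Combining the displayed identity with the substitution axiom (iv) of Def.~\ref{str2} applied \emph{inside} $C^+(St(\B),Y)$ gives
\[
R^{St(\B)}(f_{\tau_{f_1}},\dots,f_{\tau_{f_n}})=R^{St(\B)}(f_1,\dots,f_n),
\]
which is clause (n3) of Def.~\ref{mor} with equality rather than just $\leq$. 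The same argument, replacing axiom (iv) with axiom (v), handles function symbols and delivers clause (n4) with equality. Together with the surjectivity already noted, this upgrades $\langle\id_\B,\Phi\rangle$ to an isomorphism of $\B$-valued models in the full signature $\mathcal{L}$.

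A short verification is still owed: namely that $\langle Y^\B,R_i^\B,F_j^\B\rangle$ actually satisfies the axioms (i)--(vii) of Def.~\ref{str2}. This comes for free, because each axiom holds in $C^+(St(\B),Y)$ by hypothesis and transfers across $\Phi$ using exactly the identifications above (with $i=\id_\B$ the axioms translate verbatim). The only delicate point in the whole argument is the order of operations in this bookkeeping: one must invoke the substitution axioms in $C^+(St(\B),Y)$, where they are given, rather than in $Y^\B$, where they have not yet been established; the displayed identity $\Qp{f_{\tau_f}=f}^{St(\B)}=1_\B$ is precisely what makes this possible, and is the only nontrivial ingredient not already packaged in Theorem~\ref{thm:iso} and Lemma~\ref{surj1}.
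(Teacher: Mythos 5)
Your proposal is correct and is, in expanded form, the same argument the paper intends: the paper's entire proof of Theorem~\ref{thm:iso2} is the sentence ``By Theorem~\ref{thm:iso} and Lemma~\ref{lem:embed}, we immediately have the following'', and what you have written out is precisely the bookkeeping hidden behind ``immediately'' --- equality preservation (equation~(\ref{eqn:ZZZ})/(\ref{eqn:ZZZY})), surjectivity via $\tau_{f_\tau}=\tau$ from Lemma~\ref{surj1}, and the near-tautological transfer of the interpretations $R^\B$, $F^\B$ through the identity $\Qp{f_{\tau_f}=f}^{St(\B)}=1_\B$. Your chain establishing that identity is right, and your remark about the order of operations (invoke substitution in $C^+(St(\B),Y)$, not in $Y^\B$) is well taken.

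One clause of your argument needs a patch. For function symbols you say the relation-symbol argument goes through ``replacing axiom (iv) with axiom (v)'', but axiom (v) of Def.~\ref{str2} licenses substitution only in the \emph{argument} coordinates $\tau_1,\dots,\tau_n$; none of (i)--(vii) yields the value-coordinate congruence $\Qp{F(\vec{h})=\mu}\wedge\Qp{\mu=\nu}\le\Qp{F(\vec{h})=\nu}$, which is what clause \ref{n4} of Def.~\ref{mor} requires when you trade $g$ for $f_{\tau_g}$ in the last slot. This congruence is in fact not abstractly derivable from the listed axioms: a two-element $2$-valued structure with $\Qp{\mu=\nu}=1$, $\Qp{F(\chi)=\mu}=1$ and $\Qp{F(\chi)=\nu}=0$ for all $\chi$ satisfies (i)--(vii) but violates it. Fortunately the gap closes instantly in the concrete model at hand: in $C^+(St(\B),Y)$ the statement $\Qp{f=g}^{St(\B)}=1_\B$ means $\intclosure{\set{H:f(H)=g(H)}}=St(\B)$, i.e.\ that $f$ and $g$ agree on a dense subset of $St(\B)$, and two continuous functions into the Hausdorff space $\mathcal{H}$ agreeing on a dense set are equal everywhere. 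Hence $f_{\tau_f}=f$ holds \emph{pointwise}, not merely with boolean value $1_\B$, so clauses \ref{n3} and \ref{n4} of Def.~\ref{mor} hold verbatim by literal substitution, with no appeal to the congruence axioms at all. This observation repairs the function-symbol case and, as a bonus, makes your relation-symbol step and the transfer of the Def.~\ref{str2} axioms to $Y^\B$ entirely mechanical.
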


\subsection{Some further comments on the proof of Theorem \ref{thm:iso}}
One can get a proof of this theorem for the case $Y=\C$ following
Jech's\footnote{This isomorphism of $\C^\B$ and $C^+(St(\B))$ 
has also been independently proven by Ozawa in
\cite{MR759416}, but Jech's proof is in our eyes more elegant and informative.} results in~\cite{MR779056}
as follows: Jech defines the notion of stonean algebra as 
an abelian space of (possibly unbounded) normal operators. Stonean algebras are a
natural generalization of the notion of commutative $C^*\!$-algebras. Jech proves that: 
\begin{itemize}
\item
The isomorphism type of any \emph{complete} 
stonean algebra is determined
by the complete boolean algebra given by its space of \emph{projections}, 
\item
For any complete boolean algebra
$\B$, $\C^\B$ and $C^+(St(\B))$ are  \emph{complete} 
stonean algebras whose spaces of \emph{projections} are in both 
cases isomorphic to $\B$.
\end{itemize}
Jech's proof that $\C^\B$ is a complete stonean algebra exploits the property that 
$(\mathbb{R},<)$ is a complete linear order in order to give a simple description of the 
$\B$-names for real numbers of $V^\B$, and also the property that any element of a stonean algebra can be 
decomposed uniquely as the direct sum of its real and imaginary part.
The isomorphism between $\C^\B$ and $C^+(St(\B))$ is obtained by
showing that the Gelfand transform can be defined also for
stonean algebras and yields that any stonean algebra $\mathcal{A}$ 
is isomorphic to $C^+(X)$ where $X$ is the spectrum of $\mathcal{A}$. Moreover, in case $\mathcal{A}$ is complete,
its spectrum $X$ is also homeomorphic to the Stone space
of the complete boolean algebra of projections on $\mathcal{A}$.
In both arguments there are peculiar properties of $\mathbb{R}$ (being a complete linear order) 
which are not shared by other Polish spaces $Y$, and
of a stonean algebra $\mathcal{A}$ (the characterization of its elements 
in terms of the involution operation and of its self-adjoint operators)
which are not shared by the function spaces $C^+(X,Y)$ for $Y\neq \C$. 
Ozawa's proof relies on the same properties of $\mathbb{R}$ and of commutative algebras of normal operators
used in Jech's argument.
In particular we do not see any natural pattern to generalize
Jech's (or Ozawa's) proof method 
so to cover also the cases of Theorem \ref{thm:iso} for a Polish space $Y\neq \C$ other than resorting
(as we did) to purely topological characterizations of the properties of Polish spaces.
A further comment is in order at this point: we became aware of Jech's and Ozawa's work only 
after having completed 
and submitted a first version of this paper.

\section{Generic absoluteness}\label{sec5}

We can now show that for any Polish space $Y$
the $\B$-valued models $(C^+(St(\B),Y),R^{St(\B)})$, 
with $R$ a Borel (universally Baire) relation on $Y^n$,
is an elementary superstructure of $(Y,R)$.
By Lemma~\ref{lem:embed}, whenever $R$ is a Borel relation on $Y^n$ with 
$Y$ Polish, $R^\B(\sigma_1,\dots,\sigma_n)=R^{St(\B)}(f_{\sigma_1},\dots,f_{\sigma_n})$ (where $R^\B$ is defined as
in Def.~\ref{code}). This equality is a special case of the much more general result which can be
proved for universally Baire relations.

\begin{definition}[Feng, Magidor, Woodin~\cite{FENMAGWOO}]
Let $Y$ be a Polish space. 
$A\subseteq Y^n$ is universally Baire if $f^{-1}[A]$ has the Baire property in $Z$
for all continuous $f:Z\to Y^n$ and all compact Hausdorff spaces $Z$.
\end{definition}
$\mathsf{UB}$ denote the class of universally Baire subsets of $\mathcal{H}$ (or any other Polish space).

\begin{fact}
Let $Y$ be a Polish space. 
$A\subseteq Y^n$ is universally Baire if and only if $f^{-1}[A]$ has the Baire property in $Z$
for all continuous $f:Z\to Y^n$ with $Z$ compact and extremally disconnected.
\end{fact}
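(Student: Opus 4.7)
The forward direction is immediate from the definition, so the work is entirely in the converse: assuming $f^{-1}[A]$ has the Baire property in $Z$ for every continuous $f:Z\to Y^n$ with $Z$ compact Hausdorff extremally disconnected, we must show the same for every continuous $g:Z\to Y^n$ with $Z$ merely compact Hausdorff.

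The plan is to reduce the general case to the extremally disconnected case via the Gleason cover. Given an arbitrary compact Hausdorff $Z$, let $W=St(\RO(Z))$, which is compact Hausdorff and extremally disconnected since $\RO(Z)$ is a complete boolean algebra. There is a canonical continuous surjection $\pi:W\to Z$ defined by sending an ultrafilter $G\in St(\RO(Z))$ to the unique point $z\in Z$ lying in $\bigcap\set{\cl{U}:U\in G}$; compactness of $Z$ together with the finite intersection property of $G$ ensures existence, while Hausdorffness of $Z$ ensures uniqueness. This $\pi$ is moreover \emph{irreducible}: no proper closed subset of $W$ maps onto $Z$, because $\pi[\mathcal{O}_U]$ is dense in $\cl{U}$ for each $U\in\RO(Z)$.

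Now given a continuous $g:Z\to Y^n$, the composition $g\circ\pi:W\to Y^n$ is continuous with domain an extremally disconnected compact Hausdorff space, so by hypothesis $(g\circ\pi)^{-1}[A]=\pi^{-1}[g^{-1}[A]]$ has the Baire property in $W$. The remaining step is to push this back: I would show that $\pi$ reflects the Baire property, i.e. $B\subseteq Z$ has the Baire property in $Z$ whenever $\pi^{-1}[B]$ has the Baire property in $W$. This rests on two ingredients standard for irreducible continuous surjections between compact Hausdorff spaces: first, the image of a nowhere dense set under $\pi$ (and the preimage of a nowhere dense set) is nowhere dense, using irreducibility to rule out that the closure contains a nonempty open set; second, the map $U\mapsto\intclosure{\pi^{-1}[U]}$ is a boolean isomorphism between $\RO(Z)$ and $\RO(W)=\CL(W)$. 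Combining these, if $\pi^{-1}[B]=V\,\Delta\,M$ with $V\in\RO(W)$ and $M$ meager in $W$, then the corresponding $U\in\RO(Z)$ and the meager set $\pi[M]$ (plus a meager correction set coming from the generic fibre behaviour) realize $B$ as a regular open set symmetric-differenced with a meager set in $Z$.

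The main obstacle is this last transfer of meagerness from $W$ to $Z$: showing that the direct image under an irreducible continuous surjection of a meager set is meager. Once that standard topological fact is in hand — proved by reducing to nowhere dense closed sets and using irreducibility to transfer the empty-interior condition across $\pi$ via the $\RO(Z)\cong\RO(W)$ isomorphism — the conclusion that $g^{-1}[A]$ has the Baire property in $Z$ follows, establishing that $A$ is universally Baire.
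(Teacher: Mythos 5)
Your proof is correct and takes essentially the same route as the paper: reduce to the extremally disconnected case by passing to $St(\RO(Z))$ with the canonical projection $\pi$ (defined exactly as you define it, via the unique point of $\bigcap\set{\cl{U}:U\in G}$) and then transferring the Baire property of $\pi^{-1}\left[g^{-1}[A]\right]$ back along $\pi$. The only divergence is in how the transfer is justified --- the paper compresses it into the assertion that $\pi$ is continuous, open and surjective, while you argue via irreducibility; your formulation is if anything the safer one, since $\pi[\mathcal{O}_U]=\cl{U}$ for $U\in\RO(Z)$ shows $\pi$ need not be open, and it is the irreducibility-driven back-and-forth transfer of nowhere dense (hence meager) sets that actually carries the argument.
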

\begin{proof}
We need to prove just one direction, and we prove it as follows.
Assume $f:Z\to Y^n$ is continuous for some $Z$ compact Hausdorff but not extremally disconnected.
Set
$Z^*=St(\RO(Z))$ and define $\pi:Z^*\to Z$ by 
$\pi(G)=x$ if $x$ is the unique point in $Z$ belonging to 
\[
\Sigma^G=\bigcap\set{\cl{U}: U\in G}.
\]
The same arguments we encountered in the proof of the isomorphism of $C^+(St(\B))$ with $\C^\B$ show that
$\pi$ is continuous, open and surjective.
In particular $f^{-1}[A]$ has the Baire property in $Z$ iff $g^{-1}[A]$ has the Baire property in $Z^*$, 
where $g=f\circ \pi$. 
\end{proof}

By \cite[Chapter $29$, Lemma $5$]{halmos} 
Borel sets are universally Baire as already observed in Example~\ref{exmp:key}.
Woodin \cite[Theorem 3.4.5, Remark 3.4.7]{LARSON} showed that for any universally Baire set $A$
the theory of $L(\mathbb{R},A)$ is 
generically invariant in the presence
of class many Woodin cardinals which are a limit of Woodin cardinals, and moreover 
that these assumptions entail that any $\Sigma^1_n$-property defines a universally Baire relation. 
Shoenfield \cite[Lemma 25.20]{Jech}
(or \cite[Theorem 3.5.3, Remark 3.5.4]{tesi} or \cite[Lemma 1.2]{MR3453587} for a presentation of this result in line with
the content of this paper) showed that the $\Sigma^1_2$-theory of
any Polish space $X$ is generically invariant under set forcing.
This translates by the results of this paper in the following:
\begin{theorem}
Assume
$\langle R_i:i\in I\rangle$ and $\langle F_j:j\in J\rangle$ are Borel predicates and functions on some Polish space $Y$.
Let $X$ be a compact Hausdorff extremally disconnected space and $p\in X$. Then
\[
\langle Y,R_i:i\in I,F_j:j\in J\rangle\prec_{\Sigma_2}\langle C^+(X,Y)/p,R^X_i/p:i\in I,F^X_j/p:j\in J\rangle.
\]
Moreover if we assume the existence of class many Woodin cardinals which are a limit of Woodin cardinals, 
we can let each $R_i$ and $F_j$ be 
arbitrary universally Baire relations and functions, and we have the stronger conclusion that
\[
\langle Y,R_i:i\in I,F_j:j\in J\rangle\prec\langle C^+(X,Y)/p,R^X_i/p:i\in I,F^X_j/p:j\in J\rangle.
\]
\end{theorem}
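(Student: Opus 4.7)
The plan is to identify $C^+(X,Y)/p$ with the $p$-quotient of $Y^\B$ via Theorem~\ref{thm:iso2}, use Łoś's theorem to transform the question of elementarity into one about boolean values $\Qp{\phi(\check{\vec{y}})}^{V^\B}$, and then invoke the appropriate generic absoluteness theorem (Shoenfield for the Borel $\Sigma_2$ case, Woodin's for the universally Baire case) to evaluate those boolean values on canonical names.

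Setup and reduction. Let $\B = \CL(X)$, which is complete because $X$ is extremally disconnected, so that $X \cong St(\B)$. By Theorem~\ref{thm:iso2} the map $f \mapsto \tau_f$ is an isomorphism of $\B$-valued models $\langle C^+(X,Y), R_i^X, F_j^X\rangle \cong \langle Y^\B, R_i^\B, F_j^\B\rangle$; under this isomorphism the constant function $c_y$ corresponds to the canonical name $\check{y}$, since $\Qp{\tau_{c_y}\in\dot{U}_n} = \intclosure{c_y^{-1}[U_n]}$ is $1_\B$ if $y \in U_n$ and $0_\B$ otherwise. Fix $p \in X$, viewed as an ultrafilter on $\B$. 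The $\B$-valued model $C^+(X,Y)$ is full by Lemma~\ref{fullness:lemma}; the same argument extends to the universally Baire case using that $f^{-1}[R]$ has the Baire property in $X$ for continuous $f : X \to Y^n$ and universally Baire $R$, so that $R^X(f_1,\dots,f_n)$ remains a well-defined regular open set. Łoś's theorem (Theorem~\ref{los}) therefore yields, for every first-order formula $\phi(\vec{x})$ and $\vec{y} \in Y^n$,
\[
C^+(X,Y)/p \models \phi(c_{y_1}/p,\dots,c_{y_n}/p) \iff \Qp{\phi(\check{y}_1,\dots,\check{y}_n)}^{V^\B} \in p.
\]
Elementarity thus reduces to showing that $Y \models \phi(\vec{y})$ iff $\Qp{\phi(\check{\vec{y}})}^{V^\B} \in p$.

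Computing the boolean value. Since $p$ is an arbitrary ultrafilter, the desired equivalence for all $p$ forces $\Qp{\phi(\check{\vec{y}})}^{V^\B}$ to equal $1_\B$ when $Y \models \phi(\vec{y})$ and $0_\B$ otherwise. By Cohen's forcing theorem, $\Qp{\phi(\check{\vec{y}})}^{V^\B} = 1_\B$ iff for every $V$-generic $G \subseteq \B$, $V[G] \models (Y^{V[G]} \models \phi(\vec{y}))$, where $Y$ and the $R_i, F_j$ are reinterpreted in $V[G]$ through their Borel (resp.\ universally Baire) codes, as in Def.~\ref{complexinter}. For $\phi$ a $\Sigma_2$ first-order formula whose atomic subformulas are Borel, the relation $(Y, R_i, F_j) \models \phi(\vec{y})$ is a $\Sigma^1_2$ property of $\vec{y}$ together with the Borel codes of the $R_i, F_j$, with $\Pi^1_2$-negation; Shoenfield absoluteness, applied to both $\phi$ and $\neg\phi$, supplies the $V$-to-$V[G]$ equivalence and therefore the $\Sigma_2$-elementarity. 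Under class many Woodins that are limits of Woodins, Woodin's generic invariance of $\operatorname{Th}(L(\R, A))$ for universally Baire $A$ upgrades this absoluteness from $\Sigma^1_2$ to arbitrary first-order formulas with universally Baire atoms, yielding full elementarity.

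The main obstacle is verifying that the boolean interpretation $R^\B(\check{\vec{y}})$ computed in $V^\B$ agrees with the forcing value of the reinterpretation in generic extensions, i.e.\ with $\{G : V[G] \models R^{V[G]}(\vec{y})\}$: for Borel $R$ this is immediate from Def.~\ref{complexinter} together with $\Delta_1$-absoluteness of the Borel-code decoding, but for universally Baire $R$ one must use canonical tree representations and their behaviour under forcing to establish the corresponding identification. Function symbols $F_j$ are handled by the parallel argument, lifting the Borel or universally Baire graph of $F_j$ to an $(n{+}1)$-ary relation and applying the same reduction.
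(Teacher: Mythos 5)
Your proposal is correct and follows essentially the same route as the paper's own proof: identify $C^+(X,Y)$ with $Y^\B$ for $\B=\CL(X)$ via Theorem~\ref{thm:iso2}, use fullness and \L o\'{s}'s theorem at the ultrafilter $p$, and then invoke Shoenfield (for $\Sigma^1_2$, hence $\Sigma_2$ with Borel atoms) or Woodin's generic invariance (for universally Baire atoms) to see that $\Qp{\phi(\check{\vec{y}})}$ is $1_\B$ or $0_\B$ according to the truth of $\phi(\vec{y})$ in $Y$. Your added details --- checking that constant functions correspond to canonical names, applying absoluteness to both $\phi$ and $\neg\phi$, and flagging that the universally Baire case needs tree representations to identify $R^\B(\check{\vec{y}})$ with its reinterpretation in generic extensions --- are points the paper passes over silently, but they refine rather than alter its argument.
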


\begin{proof}
By Shoenfield's (or Woodin's) theorem we have that for all $\Sigma^1_2$ ($\Sigma^1_n$ for any $n$)
properties $\phi(\vec{x})$ in the parameters $\langle R_i:i\in I\rangle$, $\langle F_j:j\in J\rangle$
with each $R_i,F_j$ Borel (universally Baire)
the following are equivalent:
\begin{itemize}
\item
$\phi(\vec{r})$ holds in $\langle Y,R_i:i\in I,F_j:j\in J\rangle$,
\item
$\Qp{\phi(\vec{r})}=1_\B$ in $\C^{\B}$ for some complete boolean algebra $\B$,
\item 
$\Qp{\phi(\vec{r})}=1_\B$ in $\C^{B}$ for all complete boolean algebras $\B$.
\end{itemize}
Since $X$ is compact Hausdorff and extremally disconnected, $\mathsf{CL}(X)$ is a complete boolean algebra
and $X$ is homeomorphic to $St(\mathsf{CL}(X))$.
By Theorem \ref{thm:iso2} $C^+(X,Y)$ and $Y^{\B}$ are isomorphic $\B$-valued models. 
In particular $C^+(X,Y)$ is full.
By the first two equivalent items we get that 
$\Qp{\phi(\vec{r})}^{C^+(X,Y)}=1_\B$ in $C^+(X,Y)$ if and only if $\phi(\vec{r})$ holds in $Y$. 
Since the above holds for all relevant properties $\phi$, we can apply \L o\'{s}'s theorem to the full $\B$-valued model
$C^+(X,Y)$ in the point (ultrafilter) $p$ to conclude that
\[
\langle Y,R_i:i\in I,F_j:j\in J\rangle\prec _{(\Sigma_2)} \langle C^+(X,Y)/p,R^X_i/p:i\in I,F^X_j/p:j\in J\rangle.
\]
\end{proof}

Following Takeuti's ideas, we remark 
that these results suggest the following ``original" proof strategy to be applied in an 
algebraic geometric context rather than in an operator algebra context (as already done by Takeuti and others). 
Prove that a certain problem
regarding for example complex numbers and analytic functions 
has a solution in some forcing extension. Then argue that its solution 
can be formalized as a first order property of the structure
$C^+(X)/p$. Conclude using elementarity that the solution of the problem for the complex numbers 
is really the one computed in $C^+(X)/p$.
We have already successfully applied the above strategy to prove a result related to Schanuel's conjecture
in number theory (unfortunately for us
already proved by other means): the interested reader is referred to \cite{VIASCH15}.

\subsection*{Acknowledgements} 
The second author acknowledges support from the PRIN2012 Grant ``Logic, Models and Sets"
(2012LZEBFL),
and the Junior PI San Paolo grant 2012 NPOI (TO-Call1-2012-0076).
This research was completed whilst the second author was a visiting fellow at the 
Isaac Newton Institute for Mathematical Sciences in the programme ``Mathematical, 
Foundational and Computational Aspects of the Higher Infinite'' (HIF) funded by EPSRC grant EP/K032208/1.

	\bibliographystyle{amsplain}
	\bibliography{Bibliography}

\providecommand{\bysame}{\leavevmode\hbox to3em{\hrulefill}\thinspace}
\providecommand{\MR}{\relax\ifhmode\unskip\space\fi MR }
% \MRhref is called by the amsart/book/proc definition of \MR.
\providecommand{\MRhref}[2]{%
  \href{http://www.ams.org/mathscinet-getitem?mr=#1}{#2}
}
\providecommand{\href}[2]{#2}
\begin{thebibliography}{10}

\bibitem{FENMAGWOO}
Qi~Feng, Menachem Magidor, and Hugh Woodin, \emph{Universally {B}aire sets of
  reals}, Set theory of the continuum ({B}erkeley, {CA}, 1989) (H.~Judah,
  W.~Just, and H.~Woodin, eds.), Math. Sci. Res. Inst. Publ., vol.~26,
  Springer, New York, 1992, pp.~203--242. \MR{MR1233821 (94g:03095)}

\bibitem{MR555538}
Michael~P. Fourman, Christopher~J. Mulvey, and Dana~S. Scott (eds.),
  \emph{Applications of sheaves}, Lecture Notes in Mathematics, vol. 753,
  Springer, Berlin, 1979. \MR{555538}

\bibitem{halmos}
Steven Givant and Paul Halmos, \emph{Introduction to {B}oolean algebras},
  Undergraduate Texts in Mathematics, Springer, New York, 2009. \MR{2466574
  (2009j:06001)}

\bibitem{Jech}
Thomas Jech, \emph{Set theory}, Springer Monographs in Mathematics,
  Springer-Verlag, Berlin, 2003, The third millennium edition, revised and
  expanded. \MR{1940513 (2004g:03071)}

\bibitem{MR779056}
Thomas~J. Jech, \emph{Abstract theory of abelian operator algebras: an
  application of forcing}, Trans. Amer. Math. Soc. \textbf{289} (1985), no.~1,
  133--162. \MR{779056}

\bibitem{Kechris}
Alexander~S. Kechris, \emph{Classical descriptive set theory}, Graduate Texts
  in Mathematics, vol. 156, Springer-Verlag, New York, 1995. \MR{1321597
  (96e:03057)}

\bibitem{LARSON}
Paul~B. Larson, \emph{The stationary tower}, University Lecture Series,
  vol.~32, American Mathematical Society, Providence, RI, 2004, Notes on a
  course by W. Hugh Woodin. \MR{2069032}

\bibitem{MR759416}
Masanao Ozawa, \emph{A classification of type {${\rm I}$} {$AW^{\ast}
  $}-algebras and {B}oolean valued analysis}, J. Math. Soc. Japan \textbf{36}
  (1984), no.~4, 589--608. \MR{759416}

\bibitem{Rasiowa}
Helena Rasiowa and Roman Sikorski, \emph{The mathematics of metamathematics},
  Monografie Matematyczne, no.~41, Polish Scientific Publishers, 1963.

\bibitem{MR0505474}
Gaisi Takeuti, \emph{Two applications of logic to mathematics}, Iwanami Shoten,
  Publishers, Tokyo; Princeton University Press, Princeton, N.J., 1978,
  Kan{\^o} Memorial Lectures, Vol. 3, Publications of the Mathematical Society
  of Japan, No. 13. \MR{0505474}

\bibitem{tesi}
Andrea Vaccaro, \emph{C*-algebras and {B}-names for complex numbers},  (2015),
  Unpublished Master Thesis, (https://etd.adm.unipi.it/t/etd-08302015-193406/).

\bibitem{VIASCH15}
Matteo Viale, \emph{Forcing the truth of a weak form of {S}chanuel's
  conjecture},  (2015), To appear in Confluentes Mathematici.

\bibitem{MR3453587}
\bysame, \emph{Martin's maximum revisited}, Arch. Math. Logic \textbf{55}
  (2016), no.~1-2, 295--317. \MR{3453587}

\bibitem{MR959110}
W.~Hugh Woodin, \emph{Supercompact cardinals, sets of reals, and weakly
  homogeneous trees}, Proc. Nat. Acad. Sci. U.S.A. \textbf{85} (1988), no.~18,
  6587--6591. \MR{959110}

\bibitem{MR1713438}
\bysame, \emph{The axiom of determinacy, forcing axioms, and the nonstationary
  ideal}, de Gruyter Series in Logic and its Applications, vol.~1, Walter de
  Gruyter \& Co., Berlin, 1999. \MR{1713438}

\end{thebibliography}
\end{document}